\newtheorem{theorem}[subsection]{Theorem}
\newtheorem{proposition}[subsection]{Proposition}
\newtheorem{corollary}[subsection]{Corollary}
\newtheorem{lemma}[subsection]{Lemma}
\newtheorem{conjecture}[subsection]{Conjecture}
\newtheorem{addendum}[subsection]{Addendum}
\theoremstyle{definition}
\newtheorem{definition}[subsection]{Definition}
\newtheorem{question}[subsection]{Question}
\theoremstyle{remark}
\newtheorem{remark}[subsection]{Remark}
\newtheorem{remarks}[subsection]{Remarks}
\newcommand{\normal}{\triangleleft}
\newcommand{\mono}{\rightarrowtail}
\newcommand{\epi}{\twoheadrightarrow}
\newcommand{\iso}{\cong}
\newcommand{\Q}{{\mathbb Q}}
\newcommand{\Z}{{\mathbb Z}}
\newcommand{\N}{{\mathbb N}}
\newcommand{\F}{{\mathbb F}}
\newcommand{\FP}{\operatorname{FP}}
\newcommand{\cd}{\operatorname{cd}}
\newcommand{\hd}{\operatorname{hd}}
\newcommand{\fdhd}{\operatorname{fdhd}}
\newcommand{\cdq}{\operatorname{cd}_\Q}
\newcommand{\hdq}{\operatorname{hd}_\Q}
\newcommand{\ext}{\operatorname{Ext}}
\renewcommand{\>}{\rangle}
\newcommand{\blah}{\quad}
\newcommand{\colim}{\varinjlim{}}
\renewcommand{\lim}{\varprojlim{}}
\newcommand{\cll}{{\scriptstyle\bf L}}
\title{Dimension of elementary amenable groups}
\author{M. R. Bridson}
\author{P. H. Kropholler}
\address{\newline Mathematical Institute, 
\newline 24--29 St Giles', 
\newline Oxford OX1 3LB}
\email{ bridson@maths.ox.ac.uk}
\address{\newline School of Mathematics and Statistics, 
\newline University of Glasgow, 
\newline Glasgow G12 8QW}
\email{ peter.kropholler@glasgow.ac.uk}
\thanks{
The first author was supported in part by an EPSRC Senior Fellowship 
and a Royal Society Wolfson Research Merit Award. 
The second author was supported in part by a Visiting Professorship at Cornell University.
Both authors thank the Mittag Leffler Institute, Djursholm, for its
hospitality during the preparation of this manuscript.
}
\subjclass[2010]{20F16, 20F19, 20J05, 16E10}
\begin{document}

\begin{abstract} This paper has three parts.  
It is conjectured that for every elementary amenable group $G$ and every non-zero commutative ring $k$, 
the homological dimension $\hd_{k}(G)$ is equal to the Hirsch length $h(G)$ whenever $G$ has no $k$-torsion. 
In Part I this conjecture is proved for several classes, including the abelian-by-polycyclic groups. In Part II it is shown that the elementary amenable groups of homological dimension one are colimits of systems of groups of cohomological dimension one. In Part III the deep problem of 
calculating the
cohomological dimension of elementary amenable groups is tackled with particular emphasis on the nilpotent-by-polycyclic case, where
a complete answer is obtained over $\Q$ for countable groups.
\end{abstract}

\maketitle

\section*{Introduction} 
This paper is divided into three parts, each devoted to an aspect of the theory of (co)homological dimension for discrete soluble groups.  
Wherever possible,  we have aimed for results that hold in the class of elementary amenable groups, since this class may be regarded as the largest class of groups that warrant the title of generalized soluble groups; by definition it is 
the smallest class of groups that contains all finite and abelian groups and is closed under extensions and 
directed unions.  
Homological and cohomological dimension, though, are subtle invariants 
that oblige us to retain additional hypotheses at various stages of our treatment.

Throughout, $k$ will denote a non-zero commutative ring. The \emph{cohomological dimension} $\cd_k(G)$ over $k$ of a group $G$ is the least integer $m$ such that $H^{m+1}(G,M)=0$ for all $kG$-modules $M$; or $\infty$ if no such $m$ exists. The homological dimension is defined similarly, using homology functors in place of cohomology functors. 
It is always the case that $\hd_kG\le\cd_kG$. If $G$ is countable then $\cd_{k}(G)\le\hd_{k}(G)+1$.
If $k'$ is a second non-zero commutative ring and $k\to k'$ is a ring homomorphism then $\hd_{k'}\le\hd_{k}$ and $\cd_{k'}\le\cd_{k}$;
so in particular $\hd_k \le \hd_\Z$ and $\cd_k\le \cd_\Z$. 
We say that a group has no $k$-torsion when the orders of its finite subgroups are invertible in $k$. 
For any group $G$, the finiteness of $\hd_{k}(G)$ implies that $G$ has no $k$-torsion. We refer the reader to Bieri's notes \cite{bieriqmc} for proofs
of these basic facts and other background material.

In Part I of this paper we study the homological dimension of elementary amenable groups over arbitrary coefficient rings. 
We have found that even in the classical soluble case there is a need for clarification of the existing literature. Our main result in
this direction is Theorem \ref{hd}, which shows that for abelian-by-polycyclic groups $G$, the homological dimension is equal to the Hirsch length $h(G)$ when both quantities are finite. In particular, when finite, the homological dimension of such a group is independent of $k$. 

We conjecture that the homological dimension and Hirsch length always coincide for elementary amenable groups whenever the former is finite. This conjecture is claimed for arbitrary soluble groups by Fel{$'$}dman \cite{feldman1971} but the argument there is flawed and it seems unlikely, as we shall see, that there is any elementary way of repairing it.
We have already noted that for any group $G$, the finiteness of $\hd_{k}(G)$ implies that $G$ has no $k$-torsion. For soluble groups without $k$-torsion, one also has the  well-known inequality 
$\hd_{k}(G)\le h(G)$ (see \cite{bieriqmc}, Proposition 7.11, for example), 
and this remains valid for elementary amenable groups. Moreover,
the equality $\hd_{k}(G)=h(G)$  has been firmly 
established \emph{in the case when $k$ is a field of characteristic zero} (\cite{stammbach}).

The second part of the paper begins 
 with a series of examples that motivate the use of filtered colimit systems. We go on to consider the conjecture that 
arbitrary 
groups of cohomological dimension one over $\Z$ are locally free, as well as 
natural generalizations for other coefficient rings. We shall prove that this conjecture holds for elementary amenable groups. 
We also explain why, for integers $\ell >1$, one cannot characterise groups of cohomological dimension $\ell$ in terms of directed colimits.  

In the third part we attack the much tougher problem of characterizing
cohomological dimension 
for elementary amenable groups over arbitrary coefficient rings. We present a natural conjecture and establish some technical reductions of it.
The nilpotent-by-polycyclic case is investigated in greater detail with emphasis on the 
 coefficient ring $k=\Q$. In this generality, we establish the main conjecture for countable groups and
present a significant reduction in the case of groups of greater cardinality.  

We thank the referee for reading our original draft so carefully and for providing such constructive comments.

\section{Homological Dimension}
We begin with 
 an account of the following conjecture, discussing previous work in this direction and presenting partial results concerning group algebras of soluble groups.  
Later we shall enlarge the discussion to cover elementary amenable groups. We assume that the reader is familiar with the notion of
Hirsch length in the setting of solvable groups; it is discussed in greater generality in (\ref{def:hirschlength}). 

\begin{conjecture}\label{hdconj}
Let $G$ be an elementary amenable group with no $k$-torsion. Then $\hd_{k}(G)=h(G)$.
\end{conjecture}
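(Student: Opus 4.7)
The strategy is induction along the hierarchy defining elementary amenability, starting from finite and abelian groups and building via extensions and directed unions. The upper bound $\hd_{k}(G)\le h(G)$ is already in hand for elementary amenable groups without $k$-torsion, so the content is to produce, for each $G$, a $kG$-module that realises homology in degree $h(G)$. I would organise the argument around three claims and then assemble them.

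First, the base cases. For a finitely generated torsion-free abelian group $A\iso\Z^{n}$ one has $\hd_{k}(A)=n=h(A)$ via the Koszul resolution for every $k$; an arbitrary $k$-torsion-free abelian group is handled by combining this with the directed-union step below. For a finite group $F$ with $|F|$ invertible in $k$, $k$ is a $kF$-summand of $kF$, so $\hd_{k}(F)=0=h(F)$. This supplies witness modules at the bottom of the hierarchy.

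Second, the directed union step. If $G=\bigcup G_{i}$ is a directed union of subgroups for which the conjecture holds, choose $kG_{i}$-modules $M_{i}$ with $H_{h(G_{i})}(G_{i},M_{i})\ne 0$. Applying induction and Shapiro's lemma, $H_{h(G_{i})}(G,\operatorname{Ind}_{G_{i}}^{G}M_{i})\ne 0$, so $\hd_{k}(G)\ge h(G_{i})$ for every $i$. Since Hirsch length on elementary amenable groups satisfies $h(G)=\sup_{i} h(G_{i})$ for directed unions, this yields $\hd_{k}(G)\ge h(G)$. So the directed union case is essentially automatic once the factors are controlled.

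Third, the extension step $1\to N\to G\to Q\to 1$, with both $N$ and $Q$ satisfying the conjecture. Hirsch length is additive under extensions of elementary amenable groups, so $h(G)=h(N)+h(Q)$. The Lyndon--Hochschild--Serre spectral sequence $E^{2}_{p,q}=H_{p}(Q,H_{q}(N,M))\Rightarrow H_{p+q}(G,M)$ delivers the upper bound. To realise the lower bound, one wants a $kG$-module $M$ for which $H_{h(N)}(N,M)$ is a $kQ$-module whose top homology $H_{h(Q)}(Q,H_{h(N)}(N,M))$ is non-zero and survives as a permanent cycle. Natural candidates are $M=\operatorname{Ind}_{N}^{G}M_{N}$ for $M_{N}$ detecting $\hd_{k}(N)=h(N)$, or a diagonal tensor product $M_{N}\otimes_{k}\widetilde M_{Q}$ built from witness modules on each side; in favourable situations, e.g.\ when the $Q$-action on $H_{h(N)}(N,M)$ factors through a free action or through a module on which one has control of the top-dimensional cohomology, the corner entry can be shown to be non-trivial and to persist.

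The main obstacle is precisely this extension step, and indeed this is where Fel{$'$}dman's argument breaks down. Even when the witness modules $M_{N}$ and $M_{Q}$ are well-chosen, the induced $kQ$-module structure on $H_{h(N)}(N,M)$ is delicate: its top $Q$-homology can vanish and differentials of the LHS spectral sequence can kill the $(h(Q),h(N))$-corner. Controlling either phenomenon requires genuine structural information about the action of $Q$ on $N$, which is why a uniform proof seems out of reach and why Theorem \ref{hd} is confined to the abelian-by-polycyclic case and the results of Part III to nilpotent-by-polycyclic groups over $\Q$. A plausible way forward, and what I would try next, is to find classes of extensions in which $H_{*}(N,-)$ has sufficiently tame $Q$-module structure --- for instance when $N$ is locally polycyclic with $Q$ acting so that the dualising module of $N$ is finitely generated as a $kQ$-module --- and then to patch such classes together via the directed-union step.
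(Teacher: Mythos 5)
You are attempting to prove Conjecture \ref{hdconj}, which the paper does not prove: it is explicitly stated to be open even for soluble groups, and the paper establishes it only in special cases (constructible virtually soluble groups in Corollary \ref{constrcor}, abelian-by-polycyclic-by-finite groups in Theorem \ref{hd}) together with the general bounds of Theorem \ref{hd0}. So there is no complete proof to compare yours against, and yours is not one either --- as you yourself say, the extension step is unresolved. Your base cases and directed-union step are fine, and the latter is essentially Remark \ref{simp}: both $\hd_{k}$ and $h$ are suprema over finitely generated subgroups, so the conjecture reduces to the finitely generated case (you do not even need Shapiro here, since $\hd_{k}$ is monotone under passage to subgroups).

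The genuine gap is exactly where you place it, but your proposed remedies for the extension step cannot work as stated, and it is worth seeing why. Any witness module built from finite-dimensional pieces over a field $k$ is doomed in positive characteristic: Proposition \ref{propI.8} shows that for a torsion-free soluble minimax group $\fdhd_{\F_{p}}(G)=h(G)-m_{p}(G)$, which is strictly less than $h(G)$ whenever $G$ has a $p$-divisible section. Already for $G=\Z[1/p]\rtimes\Z$ --- a metabelian group, hence at the very first extension step of your induction --- and $k=\F_{p}$, one has $H_{1}(N,M)=0$ for every finite $\F_{p}N$-module $M$ with $N=\Z[1/p]$, so the corner entry $H_{1}(Q,H_{1}(N,M))$ of the LHS spectral sequence vanishes identically and no diagonal tensor product of witnesses can detect degree $2$. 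This is precisely the flaw in Fel{$'$}dman's argument that the paper diagnoses. Where the paper does succeed (Theorem \ref{hd}), it proceeds quite differently: after reducing to $G=A\rtimes S$ via the near-splitting results of Lennox--Robinson and Robinson--Wilson (Proposition \ref{xxxxxxxx}), it embeds $A$ into a torsion-free \emph{constructible} group $C\iso(A\otimes\Z[1/m])\rtimes\langle t\rangle$, an inverse duality group over $\Z$ by Proposition \ref{constr}; the freeness of the dualizing module lets one apply the Fel{$'$}dman--Bieri Theorem \ref{feld} to $E=C\rtimes S$ to obtain the exact equality $\hd_{k}(E)=h(C)+h(S)$, and then $\hd_{k}(G)\ge\hd_{k}(E)-1$ because $E$ is an extension by $\Z$ of a group with the same $\hd_{k}$ as $G$. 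The missing idea in your sketch is this duality/type-$\FP$ mechanism, which sidesteps the construction of an explicit witness module altogether; if you want to push further, the productive direction is to identify when a normal subgroup can be enlarged to one of type $\FP$ with $k$-free top cohomology, not to tame the $Q$-module structure of $H_{h(N)}(N,M)$.
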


We wish to emphasize that this conjecture is open for soluble groups. The statement for elementary amenable groups can be shown to follow easily from the soluble case. The condition of no $k$-torsion was discussed in 
the introduction. Beyond this, the first thing that we shall prove is the following:

\begin{theorem}\label{hd0}
Let $G$ be an elementary amenable group with no $k$-torsion. 
Then $h(G)/2\le \hd_{k}(G)\le h(G)$
\end{theorem}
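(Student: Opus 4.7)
The upper bound $\hd_k(G)\le h(G)$ is Proposition~7.11 of \cite{bieriqmc} for solvable $G$. It extends to arbitrary elementary amenable $G$ by writing $G$ as a directed union of finitely generated subgroups (each virtually solvable by Hillman--Linnell) and invoking the continuity of both $\hd_k$ and $h$ under directed unions.

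For the lower bound the plan is to locate a torsion-free abelian subgroup $A\le G$ of rank at least $\lceil h(G)/2\rceil$. The monotonicity $\hd_k(G)\ge\hd_k(A)$, combined with the standard calculation $\hd_k(A)=\operatorname{rank}_{\mathbb Z}(A)$ --- valid for any torsion-free abelian group of finite rank over any non-zero commutative ring $k$ --- will then yield
\[
\hd_k(G) \;\ge\; \hd_k(A) \;\ge\; \lceil h(G)/2\rceil.
\]
Two preliminary reductions are available. First, continuity of $h$ and $\hd_k$ under directed unions reduces to finitely generated $G$. Second, Hillman--Linnell together with the fact that $\hd_k$ is preserved on passage to a finite-index subgroup (a transfer argument that relies on the no $k$-torsion hypothesis) reduces further to $G$ finitely generated, torsion-free, and solvable.

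The principal obstacle is the purely group-theoretic claim that every finitely generated torsion-free solvable group of Hirsch length $n$ contains a free abelian subgroup of rank at least $\lceil n/2\rceil$. I would prove this by induction on the derived length. Let $A$ denote the last non-trivial term of the derived series and $Q=G/A$; the identity $h(G)=\operatorname{rank}(A)+h(Q)$ forces one of the two summands to be at least $h(G)/2$. If $\operatorname{rank}(A)$ is the larger, then $A$ itself supplies the required free abelian subgroup. Otherwise one must promote a large free abelian subgroup of $Q$ to one of $G$, and the obstruction here --- that commuting lifts of commuting elements need not commute in $G$ --- is precisely what imposes the factor-of-$2$ loss in the statement and separates the theorem from the conjectured equality $\hd_k(G)=h(G)$. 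Extending the conclusion from finitely generated solvable groups back to elementary amenable ones is then routine via the same directed-union continuity used in the first paragraph.
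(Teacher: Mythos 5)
Your upper bound and the reduction to finitely generated groups are fine, but the lower bound rests on a claim that is both unproved in your sketch and, in fact, false: it is not true that every finitely generated torsion-free soluble group of Hirsch length $n$ contains a free abelian subgroup of rank at least $\lceil n/2\rceil$. This already fails for nilpotent groups. Take a class-two nilpotent Lie algebra $\mathfrak g=V\oplus Z$ over $\Q$ with $\dim V=d$, $\dim Z=3$ and bracket a generic surjection $\Lambda^2V\to Z$; a dimension count in the Grassmannian shows that a subspace $U\le V$ with $[U,U]=0$ has dimension at most $(2d+3)/5$, so every abelian subalgebra of $\mathfrak g$ has dimension at most $(2d+3)/5+3$, which is below $(d+3)/2$ once $d$ is large. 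A lattice in the corresponding unipotent group (Mal'cev) is then a finitely generated torsion-free nilpotent group of Hirsch length $n=d+3$ all of whose free abelian subgroups have rank $<n/2$. This is why your induction on derived length stalls exactly where you flag the ``principal obstacle'': after spending the factor of two on choosing the larger of $\operatorname{rank}(A)$ and $h(Q)$, there is no slack left to absorb the failure of lifting, and no suitable lift exists in general. A secondary problem: your reduction to torsion-free $G$ by passing to a finite-index subgroup is not available, since a finitely generated elementary amenable group of finite Hirsch length need not be virtually torsion-free (e.g.\ $C_{p^\infty}\rtimes\Z$); one must instead quotient by the largest normal locally finite subgroup $T$, using $\hd_k(T)=0$.

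The paper's proof avoids abelian subgroups altogether. After killing $T$ and reducing to a finitely generated (hence virtually soluble minimax) group, it invokes the Lennox--Robinson theorem: there are nilpotent subgroups $N$ and $S$, with $N$ normal and $S$ finitely generated, such that $NS$ has finite index in $G$. Then $h(N)+h(S)\ge h(NS)=h(G)$, so one of $N$, $S$ has Hirsch length at least $h(G)/2$; and for each of these the equality $\hd_k=h$ is already known ($N$ is torsion-free locally polycyclic, $S$ is polycyclic and hence covered by the constructible case). Monotonicity of $\hd_k$ under passage to subgroups finishes the argument. The lesson is that the right objects to look for are subgroups for which $\hd_k=h$ is known --- nilpotent ones suffice and are plentiful by Lennox--Robinson --- rather than free abelian ones, which can be too small.
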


In particular, this shows that the finiteness of $\hd_{k}(G)$ is equivalent to finiteness of $h(G)$ and so for torsion-free elementary amenable groups, finiteness of homological dimension over $k$ is independent of $k$. 

Of course we expect the equality $\hd_{k}(G)=h(G)$ whenever $\hd_{k}(G)$ is finite. We establish this in a special case by using some of the techniques introduced by Fel{$'$}dman.

\begin{theorem}\label{hd}
Conjecture \ref{hdconj} holds for abelian-by-polycyclic-by-finite groups.
\end{theorem}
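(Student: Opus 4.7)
My plan is to combine the upper bound $\hd_k(G) \le h(G)$ from Theorem \ref{hd0} with a matching lower bound obtained via a Lyndon-Hochschild-Serre spectral sequence argument in the spirit of Fel{$'$}dman. Write $G$ as an extension $1 \to A \to G \to Q \to 1$ with $A$ abelian and $Q$ polycyclic-by-finite, and set $r = h(A)$, $s = h(Q)$, so that $h(G) = r + s$. If $h(G) = \infty$ then Theorem \ref{hd0} forces $\hd_k(G) = \infty$ and there is nothing to prove, so I may assume $r, s < \infty$ and focus on establishing $\hd_k(G) \ge r + s$.

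The LHS spectral sequence $E^2_{p, q} = H_p(Q, H_q(A, M)) \Rightarrow H_{p+q}(G, M)$ has the corner $(s, r)$ as a permanent cycle: both incoming and outgoing differentials vanish, because $H_p(Q, -) = 0$ for $p > s$ (by $\hd_k(Q) \le s$, again from Theorem \ref{hd0}) and $H_q(A, -) = 0$ for $q > r$ (by $\hd_k(A) \le r$). Hence any coefficient module $M$ with $E^2_{s, r} = H_s(Q, H_r(A, M)) \ne 0$ gives $H_{r + s}(G, M) \ne 0$, and this is enough. For polycyclic-by-finite $Q$ with no $k$-torsion, the classical theory yields $\hd_k(Q) = s$, producing a $kQ$-module $N$ with $H_s(Q, N) \ne 0$, so it remains to exhibit $M$ such that $H_r(A, M)$ realizes $N$ as a $kQ$-module (up to harmless summands).

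When $A$ is finitely generated abelian, the no-$k$-torsion hypothesis kills the contribution of its torsion subgroup in positive degrees and $H_r(A, k) \cong k$ is free of rank one, with $Q$ acting via the determinant character $\chi$ of its action on $A \otimes \Q \cong \Q^r$; taking $M$ to be the $kG$-module with trivial $A$-action and $kQ$-structure $N \otimes_k \chi^{-1}$ then yields $H_r(A, M) \cong N$ by the universal coefficient theorem, and the argument closes. For infinitely generated $A$ of finite Hirsch length $r$, I write $A$ as the directed union of its finitely generated $\Z Q$-submodules (which is available because $\Z Q$ is Noetherian, $Q$ being polycyclic-by-finite) and pass to the colimit, tracking $H_r(A, k)$ and its $Q$-action through the limit.

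The principal obstacle is precisely this last step: uniformly identifying $H_r(A, k)$ as an invertible rank-one $kQ$-module with a well-defined determinant character when $A$ is not finitely generated and the coefficient ring $k$ is arbitrary. Here $\wedge^r A$ is only a nonzero subgroup of $\Q$, and $H_r(A, k)$ may fail to be free over $k$, so both the $kQ$-module structure and the application of the universal coefficient theorem require care. This is exactly the point at which Fel{$'$}dman's original argument becomes delicate, and where I would adapt his techniques, carrying out the spectral sequence argument at each finitely generated stage and then invoking the colimit to complete the proof.
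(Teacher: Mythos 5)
Your strategy is sound up to and including the finitely generated case of $A$, but the colimit step you flag as ``the principal obstacle'' is not a technicality to be smoothed over: it is precisely the gap in Fel{$'$}dman's argument that this theorem is designed to circumvent, and it cannot be closed by tracking $H_r(A,k)$ through the limit. Concretely, write $A=\bigcup_n A_n$ with each $A_n\cong\Z^r$ of finite index $d_n$ in $A_{n+1}$; the transition map $H_r(A_n,k)\to H_r(A_{n+1},k)$ is multiplication by $d_n$, so if $k$ has characteristic $p$ and infinitely many $d_n$ are divisible by $p$ (as happens already for $A=\Z[1/p]$, $r=1$), then $H_r(A,k)=\colim(k\xrightarrow{d_1}k\xrightarrow{d_2}\cdots)=0$. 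Thus for any coefficient module $M$ with trivial $A$-action the corner term $E^2_{s,r}=H_s(Q,H_r(A,M))$ vanishes identically, and no choice of $N$ or twist by a determinant character can rescue it. This is exactly the phenomenon quantified in Proposition \ref{propI.8} and Lemma \ref{LI7}: finite-dimensional (or trivial-source) coefficients can only ever certify $\hd_{\F_p}(G)\ge h(G)-m_p(G)$. A witness module $M$ with $H_r(A,M)\ne 0$ does exist, but it must carry a nontrivial, infinitely generated $A$-action, and there is no known elementary way to control the resulting $Q$-module $H_r(A,M)$ well enough to keep $H_s(Q,-)$ nonzero --- this is the unrepairable sentence in Fel{$'$}dman's proof.

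The proof in the paper sidesteps the computation of $H_r(A,-)$ entirely. After reducing (via Proposition \ref{xxxxxxxx} and the discussion in \S\ref{LR}) to a semidirect product $A\rtimes S$ with $A$ torsion-free abelian minimax and $S$ polycyclic, one embeds $A$ into the kernel of a torsion-free \emph{constructible} soluble group $C\cong(A\otimes\Z[1/m])\rtimes\langle t\rangle$, namely the ascending HNN-extension of a lattice $B\le A$ along $b\mapsto b^m$. Being constructible and torsion-free, $C$ is an inverse duality group with $\hd_k(C)=h(A)+1$ for every $k$ (Proposition \ref{constr}), so the Fel{$'$}dman--Bieri formula (Theorem \ref{feld}(ii)) applies \emph{legitimately} to $E=C\rtimes S$ and gives $\hd_k(E)=h(A)+1+h(S)$; since $E$ is an extension of $\hat G=(A\otimes\Z[1/m])\rtimes S$ by $\Z$ and $\hd_k(\hat G)=\hd_k(G)$, one gets $\hd_k(G)\ge\hd_k(E)-1=h(G)$. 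In short: the paper buys the duality hypothesis needed for Theorem \ref{feld} by enlarging the kernel to a constructible group at the cost of one extra dimension, rather than attempting to compute the top homology of the original abelian kernel. If you want to salvage your outline, this replacement of $A$ by $C$ is the missing idea.
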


\begin{remark}
\label{simp} For any group $G$, $\hd_{k}(G)$ is equal to the supremum of $\hd_{k}(H)$ as $H$ runs through the finitely generated subgroups of $G$. Naturally this is useful in proving Theorem \ref{hd}.
This is false for cohomological dimension, a point which we discuss more fully in Part II.

It is widely recognized that the homological (or weak) dimension is more easily understood than the cohomological dimension in the context of group rings. 
\end{remark}

Conjecture \ref{hdconj} is similar in form to many well established results. Stammbach proved this for soluble groups in case $k$ is a field of characteristic zero, (\cite{stammbach}, Theorem 1). A very readable account may be found in Bieri's notes (\cite{bieriqmc}, \S7.3). The special case of Theorem \ref{hd} in which $k=\Z$ appears in Bieri's notes (\cite{bieriqmc}, Theorem 7.10a). Bieri's account follows similar lines to Stammbach's original work, using along the way the fact that if $G$ is a torsion-free nilpotent group of Hirsch length $n$ then the homology groups $H_{n}(G,\Z)$ and $H_{n}(G,\Q)$ are both  non-zero. Gruenberg also gives a useful account \cite{gruenberg} and goes further by determining both the cohomological dimension and 
 the homological dimension for nilpotent groups.

Stammbach's argument leads to a weaker conclusion over fields of positive characteristic. The reason is that $p$-divisibility in the group results in premature vanishing of homology with trivial coefficients of characteristic $p$. For example, if $G$ is the additive group $\Z[1/p]$ and $\F_{p}$ is the field of $p$ elements then $H_{1}(G,\F_{p})=0$ while it is obvious that $\hd_{\F_{p}}(G)=1$ because $G$ has an infinite cyclic subgroup which already has homological dimension one over any coefficient ring. The issue becomes more acute when one embeds $\Z[1/p]$ into a finitely generated group. The one-relator group $H:=\langle x,y;\ x^{-1}yx=x^{p}\rangle$ contains a copy of $\Z[1/p]$ generated by the conjugates of $y$. Moreover $H$ is a metabelian group and so falls within the remit of Theorem \ref{hd}. It is indeed true that $\hd_{k}(H)$ equals the Hirsch length $h(H)=2$ no matter what the choice of $k$, but proving this appears to require ideas beyond those presented by Bieri, Gruenberg or Stammbach for dealing with $k=\Z$ or $k=\Q$.

On the other hand, there is another paper \cite{feldman1971} by Fel{$'$}dman which builds on Stammbach's work \cite{stammbach} and cites Gruenberg's notes \cite{gruenberg}. 
 Fel{$'$}dman announces Theorem \ref{hd} above but the proof is flawed, as we shall explain. 
  At the same time, Fel{$'$}dman's paper contains some very important insights amongst the most crucial of which lead to the following powerful equations.

\begin{theorem}[Fel{$'$}dman (\cite{feldman1971}, Theorem 2.4), Bieri (\cite{bieriqmc}, Theorem 5.5)]\label{feld} 
Let $N\mono G\epi Q$ be a short exact sequence of groups in which $N$ is of type $\FP$ over $k$. Suppose that $H^{n}(N,kN)$ is free as a $k$-module for $n=\cd_{k}(N)\ (=\hd_{k}(N))$. Then
\begin{enumerate}
\item
if $\cd_{k}(Q)<\infty$ then $\cd_{k}(G)=\cd_{k}(N)+\cd_{k}(Q)$, and
\item
if $\hd_{k}(Q)<\infty$ then $\hd_{k}(G)=\hd_{k}(N)+\hd_{k}(Q)$.
\end{enumerate}
\end{theorem}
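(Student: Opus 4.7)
The plan is to analyse the Lyndon--Hochschild--Serre spectral sequences attached to the extension $N\mono G\epi Q$:
\[
E_2^{p,q}=H^p(Q,H^q(N,M))\Longrightarrow H^{p+q}(G,M)
\]
together with its homological counterpart $E^2_{p,q}=H_p(Q,H_q(N,M))\Rightarrow H_{p+q}(G,M)$. Since $\cd_k(N)=\hd_k(N)=n$, the non-zero rows (resp.\ columns) of each spectral sequence lie in the strip $0\le q\le n$; combined with the finiteness of $\cd_k(Q)$ (resp.\ $\hd_k(Q)$) this immediately yields the upper bounds $\cd_k(G)\le n+\cd_k(Q)$ and $\hd_k(G)\le n+\hd_k(Q)$. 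This easy half uses neither the $\FP$ hypothesis nor the $k$-freeness of $D:=H^n(N,kN)$.

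For the matching lower bounds the $\FP$ hypothesis on $N$ produces a finite resolution $P_\bullet\to k$ by finitely generated projective $kN$-modules of length $n$. Applying $-\otimes_{kN}M$ to the tail of the dual complex $\operatorname{Hom}_{kN}(P_\bullet,kN)$ and using right-exactness of the tensor product gives the natural identification
\[
H^n(N,M)\cong D\otimes_{kN}M
\]
for every $kN$-module $M$; when $M$ is a $kG$-module this is an isomorphism of $kQ$-modules, with the $Q$-action on the right side arising from the outer action of $G$ on $D$ (which factors through $Q$ because $N$ acts trivially on its own cohomology) and the restricted $G$-action on $M$.

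The strategy is then to choose a $kQ$-module $M_0$ realising $m:=\cd_k(Q)$ in the sense that $H^m(Q,M_0)\ne 0$, and to construct a $kG$-module $M$ for which $H^n(N,M)\cong M_0$ as $kQ$-modules. The entry $E_2^{m,n}\cong H^m(Q,M_0)$ is then non-zero and sits at the corner of the non-zero rectangle $\{0\le p\le m,\ 0\le q\le n\}$, so no differential can enter or leave this position; it persists to $E_\infty$, forcing $H^{m+n}(G,M)\ne 0$ and hence $\cd_k(G)\ge m+n$. Statement (ii) proceeds in parallel, using the homological spectral sequence and the analogous description of $H_n(N,-)$ extracted from the top degree of $P_\bullet\otimes_{kN}M$.

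The central obstacle is the construction of the witness module $M$ realising a prescribed $M_0$ as the top $N$-cohomology, and this is precisely where the $k$-freeness of $D$ becomes essential: it enables the functor $D\otimes_{kN}-$ to be matched against the category of $kQ$-modules without the obstruction of $\operatorname{Tor}$ terms that would otherwise shrink the top cohomology below the intended target. Concretely one expects $M$ to be manufactured as a tensor construction combining $M_0$ with a $kN$-module whose image under $D\otimes_{kN}-$ is $k$, the freedom for which rests on $D$ being $k$-free; absent this hypothesis the identification above can collapse prematurely, mirroring the $p$-divisibility phenomena highlighted for $k=\F_p$ earlier in the paper.
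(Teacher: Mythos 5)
The paper itself offers no proof of this theorem --- it is quoted from Fel{$'$}dman and Bieri --- so I am comparing your argument with the standard one. Your upper bounds, the identification $H^{n}(N,M)\iso D\otimes_{kN}M$ extracted from the finite projective resolution, and the corner argument in the spectral sequence are all correct and are exactly the right first steps. But the proof is genuinely incomplete at the point you yourself flag as ``the central obstacle'': you never construct the witness module $M$, and the closing sentence about how one ``expects $M$ to be manufactured'' is not an argument. Moreover the target you set --- a $kG$-module $M$ with $H^{n}(N,M)\iso M_{0}$ as $kQ$-modules --- is stronger than what is needed and is not in general achievable, since the $Q$-action on $D$ is typically nontrivial and the functor $D\otimes_{kN}-$ need not realise a prescribed $kQ$-module on the nose.

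Here is how the gap is actually filled. For (i), with $m=\cd_{k}(Q)$ choose $M_{0}$ with $H^{m}(Q,M_{0})\neq0$, inflate it to $G$, and set $M:=\operatorname{Hom}_{k}(D,M_{0})$ with the diagonal $G$-action. Evaluation gives a $kQ$-map $D\otimes_{kN}\operatorname{Hom}_{k}(D,M_{0})\to M_{0}$, $c\otimes f\mapsto f(c)$, which is well defined over $kN$ because $N$ acts trivially on $M_{0}$; the $k$-freeness of $D$ enters exactly here, guaranteeing that a nonzero free $k$-module admits $k$-linear maps onto any prescribed element of $M_{0}$, so that evaluation is surjective. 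Since $H^{m+1}(Q,-)=0$, the functor $H^{m}(Q,-)$ is right exact, hence $H^{m}(Q,H^{n}(N,M))$ surjects onto $H^{m}(Q,M_{0})\neq0$, and the corner isomorphism yields $H^{n+m}(G,M)\neq0$. Dually, for (ii) the same resolution gives $H_{n}(N,M)\iso\operatorname{Hom}_{kN}(D,M)$; one takes $M:=M_{0}\otimes_{k}D$ and uses the $kQ$-monomorphism $M_{0}\to\operatorname{Hom}_{kN}(D,M_{0}\otimes_{k}D)$, $b\mapsto(c\mapsto b\otimes c)$, injective because $D$ is nonzero and $k$-free, together with the vanishing of $H_{m+1}(Q,-)$ to see that $H_{m}(Q,-)$ preserves this monomorphism. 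Without some such construction your argument proves only the inequalities $\le$, which, as the paper remarks, are the easy half.
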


Fel{$'$}dman states this result in case $k$ is a field and Bieri establishes the generalization to arbitrary $k$. In both cases of course the inequalities $\le$ follow at once from standard spectral sequence arguments.

Fel{$'$}dman goes further and states that if $G$ is soluble without $k$-torsion then $\hd_{k}(G)=h(G)$, (see \cite{feldman1971}, Theorem 3.6). However the proof offered includes the sentence ``Stammbach's proof \cite{stammbach} $\ldots$ applies for any field $\dots$'', which is an assertion we have seen to fail in some cases. For this reason we revisit Fel{$'$}dman's claim in the context of Theorem \ref{hd} and provide a new proof that uses the Fel'dman--Bieri Theorem \ref{feld} but avoids appealing to any supposed variation of Stammbach's characteristic zero argument.

It seems very unlikely that Stammbach's proof could ever have a generalization in the direction suggested by Fel{$'$}dman
because Stammbach's argument uses finite dimensional modules. Let us examine this point more closely. In case $k$ is a field, let $\fdhd_{k}(G)$ denote $\sup\{n\ge0;\ H_{n}(G,M)\ne0\textrm{ for some $kG$-module $M$ satisfying }\dim_{k}M<\infty\}$, the \emph{$k$-finite-dimensional homological dimension}.  Stammbach's argument rests on showing that if $G$ is a soluble group of finite Hirsch length $h$ then $\fdhd_{\Q}(G)\ge h$; 
he does this by explicitly constructing a module $M$ so that the $h$th homology is non-zero. 
By combining this information with the more elementary inequality $\hd_{\Q}(G)\le h$ and the trivial observation $\fdhd_{\Q}\le\hd_{\Q}$, one  concludes that $\hd_{\Q}(G)=h$. But, crucially, 
 if we consider finite fields $k$ and soluble minimax groups $G$, the $k$-finite-dimensional homological dimension cannot be a strong enough invariant for the task in hand, as we shall now explain. 
 We shall need the following easy lemma in our discussion.

\begin{lemma} \label{fdfi}
Let $k$ be a field.
Let $H$ be a subgroup of finite index in a group $G$ with $\fdhd_{k}(G)<\infty$. Then $\fdhd_{k}(H)=\fdhd_{k}(G)$.
\end{lemma}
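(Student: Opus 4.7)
The plan is to prove the two inequalities $\fdhd_{k}(H)\le\fdhd_{k}(G)$ and $\fdhd_{k}(G)\le\fdhd_{k}(H)$ separately. For the easy direction, Shapiro's lemma gives an isomorphism $H_{n}(H,L)\cong H_{n}(G,kG\otimes_{kH}L)$ for any $kH$-module $L$; when $L$ is finite-dimensional over $k$ the induced module $kG\otimes_{kH}L$ has $k$-dimension $[G:H]\cdot\dim_{k}L<\infty$, so taking suprema yields $\fdhd_{k}(H)\le\fdhd_{k}(G)<\infty$, and in particular $\fdhd_{k}(H)$ is finite.

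For the reverse inequality the central device is the short exact sequence of $kG$-modules
\[0\to k\to k[G/H]\to C\to 0,\]
in which the first map sends $1$ to the sum of the coset basis elements $\sum_{i}[g_{i}H]$ --- a $G$-equivariant injection since $G$ merely permutes the cosets, regardless of whether $[G:H]$ is invertible in $k$ --- and $C$ is the cokernel, of $k$-dimension $[G:H]-1$. Given any finite-dimensional $kG$-module $M$, tensoring over $k$ with $M$ (diagonal action) preserves exactness and produces a short exact sequence of finite-dimensional $kG$-modules
\[0\to M\to M\otimes_{k}k[G/H]\to M\otimes_{k}C\to 0.\]
The standard isomorphism $M\otimes_{k}k[G/H]\cong kG\otimes_{kH}(M|_{H})$ of $kG$-modules together with Shapiro identifies $H_{n}(G,M\otimes_{k}k[G/H])$ with $H_{n}(H,M|_{H})$, which vanishes for every $n>m:=\fdhd_{k}(H)$.

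The long exact sequence in homology then forces a surjection $H_{n+1}(G,M\otimes_{k}C)\twoheadrightarrow H_{n}(G,M)$ for every $n>m$. If there existed a finite-dimensional $M$ with $H_{n}(G,M)\ne 0$ for some $n>m$, iterating this surjection would produce finite-dimensional modules $M\otimes_{k}C^{\otimes\ell}$ with $H_{n+\ell}(G,-)$ nonzero for every $\ell\ge 0$, contradicting the standing hypothesis $\fdhd_{k}(G)<\infty$. Hence $\fdhd_{k}(G)\le m=\fdhd_{k}(H)$ and the two inequalities combine to give the lemma.

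The delicate point --- and where a naive approach would fail --- is that one is tempted to split $M$ off as a direct summand of $M\otimes_{k}k[G/H]$ by composing the augmentation $M\otimes_{k}k[G/H]\to M$ with $m\mapsto m\otimes\sum_{i}[g_{i}H]$; but that composition is multiplication by $[G:H]$, which need not be invertible in $k$. The iteration argument above circumvents this issue entirely, using the finiteness of $\fdhd_{k}(G)$ to terminate the propagation of non-vanishing homology to higher degrees.
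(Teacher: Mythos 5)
Your proof is correct and follows essentially the same route as the paper's: both directions rest on Shapiro's lemma together with the embedding of $M$ into the induced module $M\otimes_{k}k[G/H]\cong kG\otimes_{kH}(M|_{H})$ and the resulting long exact sequence. The only cosmetic difference is that the paper works directly at the top degree $n=\fdhd_{k}(G)$, where the vanishing of $H_{n+1}$ on the finite-dimensional cokernel makes $H_{n}(G,M)\to H_{n}(G,M\otimes_{kH}kG)$ injective in one step, whereas you reach the same conclusion by iterating the surjection $H_{n+1}(G,M\otimes_{k}C)\twoheadrightarrow H_{n}(G,M)$ to contradict finiteness.
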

\begin{proof}
The inequality $\fdhd_{k}(H)\le\fdhd_{k}(G)$ follows from the Eckmann--Shapiro  lemma
because the  induction functor takes finite dimensional $kH$-modules to finite dimensional $kG$-modules. If $n=\fdhd_{k}(G)$ is finite then choose a finite dimensional $kG$-module $M$ such that $H_{n}(G,M)$ is non-zero. Now there is a natural embedding of $M$ into the induced module $M\otimes_{kH}kG$ and so the tail end of the long exact sequence of homology for $G$ shows that $H_{n}(G,M\otimes_{kH}kG)$ is non-zero. Thus, again by the Eckmann--Shapiro lemma, we have $H_{n}(H,M)$ is non-zero and $\fdhd_{k}(H)\ge n$.
\end{proof}

Note that the inequality $\fdhd_{k}(H)\le\fdhd_{k}(G)$ from the beginning of this proof really does depend on finite index: we shall see this point in more detail shortly.

\subsection{A review of the theory of soluble minimax groups} A \emph{soluble minimax group} is a group with a series $1=G_{0}\normal G_{1}\normal\dots\normal G_{r}=G$ of finite length in which the factors $G_{i}/G_{i-1}$ are cyclic or quasicyclic. 
An elementary amenable minimax group allows arbitrary finite factors in addition. Both classes are subgroup, quotient and extension closed.
Every elementary amenable minimax group is virtually soluble, in fact virtually nilpotent-by-abelian. 
Every virtually torsion-free elementary amenable minimax group 
is nilpotent-by-(finitely generated abelian)-by-finite. If $p$ and $q$ are distinct primes then there is a faithful action of $\Z[1/q]$ on $C_{p^{\infty}}$ and the resulting semidirect product 
$C_{p^{\infty}}\rtimes\Z[1/q]$ is an example to illustrate that in general, while always virtually nilpotent-by-abelian, soluble minimax groups need not be virtually nilpotent-by-(finitely generated abelian). Accounts of the theory may be found in \cite{djsr1,djsr2,lennoxrobinsonbook}.

\textdbend
Diverting from Robinson's terminology in a way that we hope is harmless in this context, we shall use the term \emph{minimax} always to refer to elementary amenable minimax groups. (Robinson allows for other, more exotic groups 
 to be included but this has no relevance here.)

The {\em{Hirsch length}} 
 of a minimax group is simply the count of the number of infinite cyclic factors in a cyclic/quasicyclic/finite series
(cf. \ref{def:hirschlength}). 
 We can also define an invariant $h^{*}$ for minimax groups by counting the number of infinite factors, and for each prime $p$ we can count the number $m_{p}$ of factors isomorphic to $C_{p^{\infty}}$. It is sometimes useful to consider the set $\pi$ of primes $p$ for which $G$ has a section isomorphic to $C_{p^{\infty}}$. The independence of these counts from choice of series follows at once from the Schreier refinement theorem. Note that we also have $h^{*}=h+\sum_{p}m_{p}$ and $\pi=\{p;\ m_{p}\ne0\}$.

\begin{proposition}\label{propI.8}
Let $p$ be a prime and let $\F_{p}$ denote the finite 
 field of $p$ elements.
Let $G$ be a torsion-free soluble minimax group. Then $\fdhd_{\F_{p}}(G)=h(G)-m_{p}(G)$.  
\end{proposition}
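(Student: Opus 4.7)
I would proceed by induction on the Hirsch length $h(G)$, using a cyclic/quasicyclic normal series for $G$. The base case $h(G)=0$ is immediate: any soluble minimax group whose series has only finite or quasicyclic (hence torsion) factors is itself a torsion group, so torsion-freeness of $G$ forces $G=1$ and both sides are zero.

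For the inductive step I strip off the top factor of such a series, giving an extension $1\to N\to G\to Q\to 1$ with $Q\in\{\Z,\, C_{q^\infty}\}$ and $N$ again torsion-free soluble minimax. Since $h$ and $m_p$ are additive across short exact sequences, the quantity $h-m_p$ changes by $+1$, $0$, or $-1$ according to whether $Q=\Z$, $Q=C_{q^\infty}$ with $q\neq p$, or $Q=C_{p^\infty}$; it therefore suffices to verify the matching recursion for $\fdhd_{\F_p}$. The case $Q=\Z$ is handled by the Wang exact sequence
\[
\cdots \to H_n(N,M) \xrightarrow{1-\varphi} H_n(N,M) \to H_n(G,M) \to H_{n-1}(N,M) \to \cdots
\]
(recall that any extension by $\Z$ splits), yielding $\fdhd_{\F_p}(G)\le\fdhd_{\F_p}(N)+1$ directly and the matching lower bound by extending a test $N$-module realising $\fdhd_{\F_p}(N)$ to $G$ with trivial $\Z$-action, so that the connecting map produces a class in the boosted degree. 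The case $Q=C_{q^\infty}$ with $q\neq p$ reduces to two observations: (i) any finite-dimensional $\F_p G$-module has $C_{q^\infty}$ acting trivially (a finite $q$-divisible abelian subgroup of $GL_n(\F_p)$ has order coprime to $q$, and $C_{q^\infty}$ admits no nontrivial such quotients), and (ii) $H_s(C_{q^\infty},\F_p)=0$ for $s\ge 1$ when $q\neq p$ (by the UCT, since $H_*(C_{q^\infty},\Z)$ is $q$-torsion in positive degrees). Together these collapse the LHS spectral sequence to an edge isomorphism, giving $\fdhd_{\F_p}(G)=\fdhd_{\F_p}(N)$.

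The main obstacle is the case $Q=C_{p^\infty}$, where the required decrement of $\fdhd_{\F_p}$ by one is more delicate. Here $H_*(C_{p^\infty},\F_p)$ is $\F_p$ in every even degree, so the LHS spectral sequence does not collapse at $E^2$. I would model the argument on the benchmark calculation $\fdhd_{\F_p}(\Z[1/p])=0=\fdhd_{\F_p}(\Z)-1$, verified via the colimit presentation $\Z[1/p]=\colim(\Z\xrightarrow{\cdot p}\Z\to\cdots)$: the corestriction on $H_1(\Z,\F_p)$ induced by multiplication by $p$ is zero, so the colimit vanishes in degree one, and a Koszul-style argument gives the analogous vanishing in every positive degree for $\Z[1/p]^n$. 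For general $G$, write $G=\colim G_i$ as a filtered colimit of finitely generated subgroups and use $H_n(G,M)=\colim H_n(G_i,M)$; the upper bound $H_n(G,M)=0$ for $n>h(G)-m_p(G)$ should follow from the fact that corestriction maps on mod-$p$ homology vanish along the $m_p$ $p$-divisible directions of $G$. For the lower bound, a nonzero class in degree $h-m_p$ is exhibited in the abelian case by $H_{h-m_p}(G,\F_p)=\Lambda^{h-m_p}(G\otimes\F_p)=\F_p$ (since $\dim_{\F_p}(G\otimes\F_p)=h-m_p$ when the maximal $p$-divisible subgroup has Hirsch length $m_p$), and the general case follows by an inductive LHS edge-map argument along the derived series. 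The hard part is the first of these: making the colimit-vanishing argument precise and uniform enough to handle arbitrary finite-dimensional coefficient modules, rather than just trivial ones.
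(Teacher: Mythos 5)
Your overall strategy---induct along a normal series and track how each factor changes $\fdhd_{\F_p}$---is the right shape, and your treatment of the factors $\Z$ and $C_{q^\infty}$ with $q\neq p$ is essentially sound. But the decomposition you chose creates a case you do not resolve. By taking a cyclic/quasicyclic series you are forced to confront $Q=C_{p^\infty}$, where (as you correctly note) $H_s(C_{p^\infty},\F_p)=\F_p$ in every even degree, so the LHS spectral sequence is non-zero in infinitely many columns and does not collapse, and you must show that $\fdhd_{\F_p}$ \emph{drops} by one. You then switch to a colimit argument, but its key step---that corestriction kills mod-$p$ homology ``along the $p$-divisible directions'' uniformly over all finite-dimensional coefficient modules, not just trivial ones---is exactly what you leave unproved, and you say so yourself; this is the crux of the proposition, not a loose end. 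The paper avoids the problem by choosing a different series: after passing to a finite-index subgroup (Lemma \ref{fdfi}), it takes factors that are \emph{torsion-free abelian of rank one}, so that each ``$\Z$ followed by $C_{p^\infty}$'' pair is packaged into a single factor $A\le\Q$. For such an $A$ that is $p$-divisible one has the clean vanishing statement $H_j(A,M)=0$ for every \emph{finite} $\F_pA$-module $M$ and every $j>0$: reduce to irreducible $M$; non-trivial irreducibles die because $A$ is abelian (the action on coefficients induces the identity on homology, so the homology is annihilated by the sum of two distinct maximal ideals), and the trivial module dies by the universal coefficient theorem since $A\otimes\F_p=0$ and $A$ is torsion-free. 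Together with the finiteness of $H_j(G_i,M)$ for finite $M$ (which keeps the coefficients in each successive spectral sequence finite), this turns the hard decrement step into an easy acyclicity step.

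There is a second gap in your lower bounds. You repeatedly propose to take a finite-dimensional $N$-module realising $\fdhd_{\F_p}(N)$ and ``extend it to $G$ with trivial action of the quotient''. An $\F_pN$-module structure extends to $G$ only if it is compatible with the conjugation action of $G$ on $N$, which an arbitrary test module handed to you by the inductive hypothesis need not be. And even when it does extend, the Wang sequence only yields $H_{n+1}(G,M)\neq0$ when the $\Z$-invariants (or coinvariants) of the finite-dimensional space $H_n(N,M)$ are non-zero, which fails whenever the generator acts without eigenvalue $1$; one must then twist $M$ (e.g.\ tensor with a suitable module pulled back from the quotient) to manufacture a fixed vector. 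Stammbach-type arguments circumvent both issues by constructing explicit $G$-modules out of the abelian sections of $G$ itself rather than invoking an existential inductive hypothesis; some version of that construction is needed to make your ``inductive LHS edge-map argument along the derived series'' rigorous.
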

\begin{proof}
We may choose a series $1=G_{0}\le G_{1}\le\dots\le G_{r}$ terminating in some subgroup $G_{r}$ of finite index in $G$ in which the factors are torsion-free abelian of rank $1$. Lemma \ref{fdfi} shows that, without loss of generality, we may replace $G$ by $G_{r}$ and so assume that $G=G_{r}$. We establish the result by induction on $r$. This follows easily from the following facts.
\begin{enumerate}
\item
If $M$ is a finite $\F_{p}G$-module 
 then $H_{j}(G,M)$ is finite for all $j$.
\item
If $G$ is torsion-free abelian of rank one and $p$-divisible (or equivalently has a section isomorphic to $C_{p^{\infty}}$) then $H_{j}(G,M)=0$ for all finite $\F_{p}G$-modules $M$ and all $j>0$.
\end{enumerate}
(i) is easily proved using a spectral sequence argument. (ii) follows from straightforward calculation. First, the use of long exact cohomology sequences can be used to reduce to the case when $M$ is an irreducible $G$-module. Since $G$ is abelian, all the cohomology groups with coefficients in a non-trivial irreducible $\F_{p}G$-module are zero. Thus the only case where calculation is required is when $M=\F_{p}$ is the trivial module and in this case a universal coefficient theorem together with the $p$-divisibility of $G$ shows that cohomology vanishes in dimensions greater than zero.
\end{proof}

Combining Proposition \ref{propI.8} with the trivial observation that
the inequality $\fdhd_{k}(G)\le\hd_{k}(G)$ holds for any $k$, we see that in positive characteristic Stammbach's argument leads naturally to the following conclusion.

\begin{lemma}\label{LI7}
If $G$ is a torsion-free soluble minimax group then
for each prime $p$, $\hd_{\F_{p}}(G)\ge h(G)-m_{p}(G)$.
\end{lemma}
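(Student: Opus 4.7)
The plan is to observe that this lemma is essentially a one-line consequence of the two pieces of data assembled just above the statement. Namely, for any coefficient ring $k$ and any group $G$, one has $\fdhd_k(G) \le \hd_k(G)$, simply because $\fdhd_k$ is computed by taking a supremum over a more restricted class of modules (finite-dimensional ones) than $\hd_k$. Meanwhile, Proposition \ref{propI.8} identifies $\fdhd_{\F_p}(G)$ as $h(G) - m_p(G)$ for a torsion-free soluble minimax $G$.

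Concretely, I would write: since $\fdhd_{\F_p}(G) \le \hd_{\F_p}(G)$ trivially, Proposition \ref{propI.8} gives
\[
\hd_{\F_p}(G) \; \ge \; \fdhd_{\F_p}(G) \; = \; h(G) - m_p(G),
\]
which is the desired inequality.

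There is really no obstacle here; the lemma is stated explicitly as a packaging of Proposition \ref{propI.8} together with the trivial inequality. All the genuine work sits inside Proposition \ref{propI.8}, which was proved by induction on the length of a rank-one abelian series, using the vanishing of $H_j(G,M)$ for $j>0$ when $G \simeq C_{p^\infty}$-divisible abelian of rank one and $M$ is finite. The only thing to be careful about is to note, as the authors emphasize, that the inequality $\fdhd_k(H) \le \fdhd_k(G)$ requires finite index (and so is not available in general), whereas the inequality $\fdhd_k(G) \le \hd_k(G)$ is completely formal and holds for any $G$ and any $k$. The latter is the one we need.
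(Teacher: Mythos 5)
Your proof is correct and is exactly the paper's own argument: the authors derive the lemma in the sentence immediately preceding it by combining Proposition \ref{propI.8} with the trivial inequality $\fdhd_{k}(G)\le\hd_{k}(G)$. Nothing further is needed.
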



We have been unable to see how Stammbach's arguments could be adapted to yield the conjectured equality 
$\hd_{\F_{p}}(G)=h(G)$.


This concludes our discussion of Fel'dman's approach to Conjecture \ref{hdconj}. Henceforth we concentrate on improving 
the inequality in Lemma \ref{LI7} under additional hypotheses.

\subsection{Constructible soluble groups and inverse duality}\ 

Constructible groups were introduced by Baumslag and Bieri \cite{BaumslagBieri1976}. 
Constructible amenable groups are elementary amenable and virtually of type ${\rm{F}}$. 
They
can be built up from the trivial group by a finite series of ascending HNN-extensions and finite extensions.  It is now known that all elementary amenable groups of type $\FP$ are constructible, and we refer the reader to \cite{krophollermartinezpereznucinkis} for an up to date account of many homological properties and characterizations of these groups. 

\begin{proposition}\label{constr}
If $G$ is a torsion-free constructible soluble group then $\hd_{k}(G)=\cd_{k}(G)=h(G)$.
\end{proposition}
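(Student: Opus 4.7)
The plan is to induct on the length of a constructible series expressing $G$ as built from the trivial group by iterated finite extensions and ascending HNN extensions; the base case is immediate. In the inductive step $G$ is built from a torsion-free constructible soluble group $G_0$ satisfying the conclusion of the proposition by one constructive operation. The finite-extension case is routine: if $G_0\le G$ has finite index and $G$ is torsion-free, then $G_0$ is torsion-free, $h(G)=h(G_0)$, and the standard Serre-type finite-index results give $\cd_k(G)=\cd_k(G_0)$ and $\hd_k(G)=\hd_k(G_0)$. I therefore focus on the ascending HNN step $G=G_0\ast_\phi$ with $\phi:G_0\hookrightarrow G_0$, where $h(G)=h(G_0)+1$. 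Since type $\FP$ is preserved under both constructive operations (Bieri), $G$ is of type $\FP$ over $k$ and hence $\hd_k(G)=\cd_k(G)$; so it suffices to compute $\cd_k(G)$. The HNN Mayer--Vietoris long exact sequence in cohomology gives the upper bound $\cd_k(G)\le\cd_k(G_0)+1=h(G)$ at once.

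The hard part will be the matching lower bound $\cd_k(G)\ge h(G)$. The natural attempt to invoke Theorem \ref{feld} for the sequence $N\mono G\epi\Z$, with $N=\bigcup_{n\ge 0}t^{-n}G_0t^n$ the normal closure of $G_0$, fails in the strictly ascending case because then $N$ is a proper directed union of conjugates of $G_0$ and is not of type $\FP$. I propose instead to strengthen the inductive hypothesis to incorporate the \emph{inverse duality} statement of the subsection heading: alongside $\hd_k(G_0)=\cd_k(G_0)=h(G_0)$, carry the assertion that $H^{\ast}(G_0,kG_0)$ is concentrated in degree $h(G_0)$ and is free as a $k$-module there. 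This holds trivially at the base, and in the HNN step one exploits the fact that $kG$ is free as a left $kG_0$-module (Bass--Serre normal form for the HNN extension); combined with the type $\FP$ hypothesis on $G_0$, this transports the concentration and $k$-freeness to $H^{\ast}(G_0,kG)$ via a sum-of-Shapiro computation.

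Feeding the latter into the HNN cohomology long exact sequence
\[
\cdots \to H^n(G,kG)\to H^n(G_0,kG)\xrightarrow{1-\phi^{\ast}}H^n(G_0,kG)\to H^{n+1}(G,kG)\to\cdots
\]
collapses $H^{\ast}(G,kG)$ onto the two consecutive degrees $h(G_0)$ and $h(G_0)+1$, with $H^{h(G_0)+1}(G,kG)=\operatorname{coker}(1-\phi^{\ast})$ and $H^{h(G_0)}(G,kG)=\ker(1-\phi^{\ast})$. The essential calculation, which I expect to be the main technical obstacle, is to verify that $1-\phi^{\ast}$ acts injectively on $H^{h(G_0)}(G_0,kG)$ with nonzero $k$-free cokernel; this is the algebraic content of the fact that a strictly ascending HNN step must genuinely enlarge the cohomological dimension. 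I would approach this by unpacking $\phi^{\ast}$ using the explicit left $kG_0$-basis of $kG$ coming from Bass--Serre normal form, in the spirit of the classical Laurent-polynomial computation for $BS(1,n)=\Z\ast_{\cdot n}$. Once verified, $H^{h(G)}(G,kG)\ne 0$ yields $\cd_k(G)\ge h(G)$ and simultaneously propagates the strengthened hypothesis to $G$. The finite-extension step propagates that hypothesis by the decomposition $kG\cong kG_0^{[G:G_0]}$ as left $kG_0$-modules, and the induction closes with $\hd_k(G)=\cd_k(G)=h(G)$.
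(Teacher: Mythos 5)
Your preliminary reductions are all sound: the finite-index step, the fact that constructible torsion-free groups are of type $\FP$ over $k$ so that $\hd_{k}=\cd_{k}$, the identification $H^{\ast}(G_{0},kG)\iso\bigoplus H^{\ast}(G_{0},kG_{0})$ coming from the $kG_{0}$-freeness of $kG$, and the upper bound $\cd_{k}(G)\le h(G)$ from the Mayer--Vietoris sequence. But the proof as written has a genuine gap, and it sits exactly where you flag it: the assertion that $1-\phi^{\ast}$ acts injectively on $H^{h(G_{0})}(G_{0},kG)$ with non-zero $k$-free cokernel is stated as an expectation, not proved, and it carries the entire content of the proposition beyond the trivial inequality $\cd_{k}(G)\le h(G)$. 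What you are proposing to verify is precisely Theorem 9 of Baumslag--Bieri \cite{BaumslagBieri1976} (torsion-free constructible soluble groups are inverse duality groups), in the formulation of \S9.3 of \cite{bieriqmc}. The "Laurent-polynomial in the spirit of $BS(1,n)$" gesture underestimates what is involved: after identifying $H^{h(G_{0})}(G_{0},kG)$ with $D_{0}\otimes_{kG_{0}}kG$ for the dualizing module $D_{0}$, the map induced by the edge inclusion $\phi$ is not coordinatewise application of $\phi^{\ast}$ to a basis but (in top degree, where $\phi(G_{0})$ typically has finite index $>1$) a transfer-type map on $D_{0}$ twisted by $t$; one must show the resulting directed system of copies of $D_{0}$ has $k$-free colimit and that the kernel vanishes. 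The case where $\phi$ is an automorphism (so $G=G_{0}\rtimes\Z$ and "strict ascent" does not force the dimension up) also needs a separate, if easier, argument. None of this is false, but none of it is routine, and your induction does not close without it.

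For comparison, the paper avoids this computation entirely: it quotes Gildenhuys--Strebel \cite{gildenhuysstrebel1982} for $\hd_{\Z}(G)=\cd_{\Z}(G)=h(G)$ and Baumslag--Bieri for the statement that $D=H^{n}(G,\Z G)$ is free abelian, and then passes to arbitrary $k$ in one line via $H^{n}(G,kG)\iso H^{n}(G,\Z G)\otimes_{\Z}k\neq0$ (valid because $G$ is $\FP$ over $\Z$). If you want a self-contained argument along your lines, the most economical repair is to run your induction only over $\Z$ --- where the inverse duality bookkeeping is the classical Baumslag--Bieri argument --- and then obtain the result for general $k$ by this same base-change, rather than trying to carry $k$-freeness of top cohomology through the HNN step for every coefficient ring simultaneously.
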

\begin{proof} It follows from results of Gildenhuys and Strebel \cite{gildenhuysstrebel1982} that
a torsion-free constructible soluble group $G$ satisfies $\hd_{\Z}(G)=\cd_{\Z}(G)=h(G)<\infty$. According to Theorem 9 of \cite{BaumslagBieri1976} torsion-free constructible soluble groups are inverse duality groups over $\Z$ in the sense of \S9.3 of \cite{bieriqmc}, that is to say these groups are duality groups over $\Z$ whose dualizing modules are free abelian. Therefore, writing $n=h(G)$, 
we 
have that $H^n(G, kG) \cong H^n(G,\Z G)\otimes k\neq 0$ for any non-zero commutative ring $k$. Thus $\cd_{k}(G)=\cd_{\Z}(G)$,
and by duality $\hd_{k}(G)=\hd_{\Z}(G)$.
\end{proof}

This enables us to settle Conjecture \ref{hdconj} for constructible virtually soluble groups.

\begin{corollary}\label{constrcor}
If $G$ is constructible and virtually soluble then $\hd_{k}(G)$ is finite if and only if $G$ has no $k$-torsion and has finite Hirsch length. Moreover, when these conditions hold,  $\hd_{k}(G)=h(G)$.
\end{corollary}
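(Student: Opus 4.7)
The plan is to reduce to Proposition \ref{constr} by passing to a torsion-free soluble subgroup of finite index, and then to close the argument using the upper bound $\hd_k(G)\le h(G)$ already incorporated in Theorem \ref{hd0}.

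For the necessity direction, suppose $\hd_k(G)<\infty$. Then the basic facts recalled in the introduction give that $G$ has no $k$-torsion, and Theorem \ref{hd0} yields $h(G)\le 2\hd_k(G)<\infty$.

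For sufficiency, suppose $G$ has no $k$-torsion and $h(G)<\infty$. Choose a normal soluble subgroup $S\normal G$ of finite index (using virtual solubility) and a finite-index subgroup $E\le G$ of type $\FFF$ (using that constructible amenable groups are virtually of type $\FFF$, as recalled earlier in the excerpt). Put $T:=E\cap S$. Then $T$ has finite index in $G$; it is torsion-free because it is a subgroup of the type $\FFF$ group $E$; it is soluble because it is a subgroup of $S$; and it is constructible because the class of constructible groups is closed under passage to finite-index subgroups.

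By Proposition \ref{constr} applied to the torsion-free constructible soluble group $T$, we have $\hd_k(T)=h(T)$, and $h(T)=h(G)$ since $T$ has finite index in $G$. Combining the subgroup inequality $\hd_k(T)\le \hd_k(G)$ (which holds because $kG$ is $kT$-free, so any $kG$-flat resolution is a fortiori a $kT$-flat resolution of $k$) with the bound $\hd_k(G)\le h(G)$ from Theorem \ref{hd0}, we obtain
\[
h(G)=h(T)=\hd_k(T)\le \hd_k(G)\le h(G),
\]
whence $\hd_k(G)=h(G)$. The only real point requiring care is the construction of the torsion-free soluble constructible finite-index subgroup $T$; once that is in hand, the argument is formal and avoids any delicate transfer calculation involving the index $[G:T]$ and its behaviour in $k$.
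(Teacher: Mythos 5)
Your reduction to Proposition \ref{constr} is the same first move as the paper's: both proofs pass to a torsion-free, soluble, constructible subgroup $T$ of finite index (your intersection $E\cap S$ is a fine way to produce one, granted that the constructible class is closed under finite-index subgroups, a fact the paper also uses implicitly). Where you genuinely diverge is in the finite-index transfer. The paper climbs back up from $T$ to $G$ using two observations: that $\hd_{k}(G)=\hd_{k}(T)$ whenever $\hd_{k}(G)<\infty$ (Eckmann--Shapiro), and that finiteness of $\hd_{k}(T)$ together with the absence of $k$-torsion forces $\hd_{k}(G)<\infty$ via a K\"unneth-type argument on tensor powers of resolutions. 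You avoid both by sandwiching,
\[
h(G)=h(T)=\hd_{k}(T)\le\hd_{k}(G)\le h(G),
\]
using subgroup monotonicity of $\hd_{k}$ and the classical upper bound $\hd_{k}(G)\le h(G)$ for virtually soluble groups without $k$-torsion. That is a legitimate and arguably cleaner route to the equality, at the price of importing the upper bound as a black box.

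One point needs repair: you cite Theorem \ref{hd0} twice, but in the paper the proof of Theorem \ref{hd0} itself invokes Corollary \ref{constrcor} (for the subgroup $S$ in the Lennox--Robinson decomposition), so your necessity argument via $h(G)\le 2\hd_{k}(G)<\infty$ is circular as written. Fortunately it is not needed: a constructible virtually soluble group always has finite Hirsch length, since Proposition \ref{constr} (via Gildenhuys--Strebel) already gives $h(T)<\infty$ for the torsion-free finite-index subgroup and Hirsch length is a commensurability invariant; so necessity reduces to the standard fact that $\hd_{k}(G)<\infty$ forces $G$ to have no $k$-torsion. Likewise, for the upper bound in the sufficiency direction you should cite the well-known inequality $\hd_{k}\le h$ directly (Bieri, Proposition 7.11, extended to the virtually soluble setting as in the introduction) rather than Theorem \ref{hd0}, whose statement bundles it with the lower bound that depends on this very corollary.
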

\begin{proof}
Constructible virtually soluble groups contain torsion-free subgroups of finite index to which the Proposition can be applied. The finite index step up is standard, resting on  
 two observations about a subgroup $H$ of finite index in a group $G$:
\begin{enumerate}
\item
if $\hd_{k}(G)<\infty$ then $\hd_{k}(G)=\hd_{k}(H)$ --- a simple consequence of the Shapiro--Eckmann lemma; and
\item
if $G$ has no $k$-torsion and $\hd_{k}(H)<\infty$ then $\hd_{k}(G)<\infty$ --- this follows from a variation on the K\"unneth theorem for tensor powers of resolutions.
\end{enumerate}
\end{proof}

\subsection{Elementary amenable groups}\label{eagroups} 

Having seen that Conjecture \ref{hdconj} 
holds for constructible virtually soluble groups we wish to go further and consider the situation for the much wider class of elementary amenable groups. 
The reader will recall that the class of elementary amenable groups is defined to be the smallest class of groups which contains all finite and all abelian groups and which is closed under extensions and directed unions. All groups in this class are amenable.  They have received attention in part because they can be studied using traditional methods from the theory of generalized soluble groups, but also because at the same time they form a fascinatingly complicated class
of groups encompassing a great range of behaviour. 
We have the following version of Conjecture \ref{hdconj} for these groups.

\begin{conjecture}\label{mainI}
Let $G$ be an elementary amenable group and let $k$ be a non-zero commutative ring. Then $\hd_{k}(G)$ is finite if and only if $G$ has no $k$-torsion and finite Hirsch length. When these conditions hold we have $\hd_{k}(G)=h(G)$.
\end{conjecture}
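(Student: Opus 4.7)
The plan is to reduce the conjecture, in stages, to a situation where the Feldman--Bieri splitting Theorem \ref{feld} can be applied as the main engine. The biconditional on finiteness of $\hd_{k}(G)$ is already built into Theorem \ref{hd0} together with the standing remark that finiteness of $\hd_{k}(G)$ forces no $k$-torsion, so the real content of the conjecture is the equality $\hd_{k}(G)=h(G)$ when both are finite. The paper has noted that the soluble case implies the elementary amenable case, essentially because an elementary amenable group of finite Hirsch length contains a characteristic soluble subgroup of finite index, to which the finite-index transfer from the proof of Corollary \ref{constrcor} then applies. By Remark \ref{simp} and the monotonicity $h(H)\le h(G)$ for $H\le G$, one is further reduced to finitely generated soluble groups, and a classical result of Robinson then shows that any such group with finite Hirsch length is minimax.

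The inductive step for a torsion-free minimax soluble group $G$ is by induction on $h(G)$. The base case $h(G)=0$ forces $G$ to be locally finite, and the no-$k$-torsion hypothesis yields $\hd_{k}(G)=0$ directly. For the inductive step, the goal is to produce a non-trivial normal subgroup $N\normal G$ satisfying all the hypotheses of Theorem \ref{feld}: that $N$ is of type $\FP$ over $k$, that $H^{\cd_{k}(N)}(N,kN)$ is free as a $k$-module, and that $G/N$ inherits both the no-$k$-torsion hypothesis and the inequality $h(G/N)<h(G)$. Granted such an $N$, Proposition \ref{constr} provides $\hd_{k}(N)=h(N)$, and the Feldman--Bieri formula combined with the inductive hypothesis applied to $G/N$ yields
\[
\hd_{k}(G)=\hd_{k}(N)+\hd_{k}(G/N)=h(N)+h(G/N)=h(G).
\]

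The principal obstacle lies in producing such an $N$ in the required generality. For the nilpotent-by-polycyclic case one can select $N$ inside the nilpotent normal factor, and this, suitably implemented, is exactly what drives Theorem \ref{hd}. In general the Fitting subgroup of a minimax group is nilpotent but need not be finitely generated (witness $\Z[1/p]$), so it is not automatically of type $\FP$; one would have to approximate by subnormal f.g. pieces, or pass through a directed colimit that exploits the constructibility of the approximants via Proposition \ref{constr}. More seriously, in characteristic $p$ the $p$-divisibility obstruction isolated in Lemma \ref{LI7} shows that the quotient $G/N$ may still contain $C_{p^{\infty}}$-sections that cause $\hd_{k}(G/N)$ to fall below $h(G/N)$; circumventing this requires a careful choice of $N$ absorbing enough of the $p$-divisibility of $G$, and this is precisely the phenomenon that defeats Stammbach's approach. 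Extending beyond minimax to the fully general elementary amenable case adds the further difficulty of manufacturing a colimit representation whose terms are minimax and whose Hirsch lengths are controlled, and this step appears to need genuinely new input beyond the methods developed in the present paper.
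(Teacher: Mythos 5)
The statement you are asked to prove is Conjecture \ref{mainI}: it is an \emph{open conjecture}, and the paper offers no proof of it, only partial results (Theorem \ref{hd0}, Theorem \ref{hd}, Proposition \ref{constr}, Corollary \ref{constrcor}). Your writeup is accordingly not a proof but a reduction strategy, and to your credit you say so explicitly: the ``principal obstacle'' you identify --- manufacturing a non-trivial normal subgroup $N\normal G$ of type $\FP$ over $k$ with $H^{\cd_k(N)}(N,kN)$ free over $k$, such that induction applies to $G/N$ --- is genuinely unresolved, and the characteristic-$p$ divisibility phenomenon you cite from Lemma \ref{LI7} is exactly why Stammbach's and Fel{$'$}dman's arguments do not close the general case. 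Your outer reductions (finiteness via Theorem \ref{hd0}, passage to finitely generated soluble minimax groups via Remark \ref{simp} and the structure theory, killing the locally finite radical) do track the paper's own preliminary manoeuvres. But the gap in the inductive step is real, not a detail to be ``suitably implemented'': for a torsion-free minimax $G$ the natural candidates for $N$ (e.g.\ subgroups of the Fitting subgroup such as $\Z[1/p]$) fail to be of type $\FP$, and the quotient $G/N$ can lose homological dimension relative to Hirsch length in positive characteristic, so the identity $\hd_k(G)=h(N)+\hd_k(G/N)$ cannot be fed back into the induction.

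One factual correction: you assert that in the nilpotent-by-polycyclic case ``one can select $N$ inside the nilpotent normal factor, and this \ldots is exactly what drives Theorem \ref{hd}.'' That is not how the paper's proof of Theorem \ref{hd} works, and indeed it cannot work that way, for the type-$\FP$ reason you yourself raise. The paper goes \emph{up}, not down: it embeds the abelian normal subgroup $A$ into a torsion-free constructible group $C\cong(A\otimes\Z[1/m])\rtimes\<t\>$ (an ascending HNN-extension of a lattice $B<A$), forms the overgroup $E=C\rtimes S$ of $\hat G=(A\otimes\Z[1/m])\rtimes S$, applies Theorem \ref{feld} to $C\mono E\epi S$ (legitimate because $C$, unlike $A$, is of type $\FP$ and an inverse duality group by Proposition \ref{constr}), and then recovers $\hd_k(G)\ge\hd_k(E)-1$ from the extension $\hat G\mono E\epi\<t\>$ together with $\hd_k(\hat G)=\hd_k(G)$. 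This enlargement trick is precisely the device that sidesteps the obstruction you identify, and it is the ingredient missing from your sketch; whether it can be pushed beyond the abelian-by-polycyclic setting is, as far as this paper is concerned, the open problem.
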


The definition of Hirsch length is well known for soluble groups but may need some elaboration for elementary amenable groups in general.
We formulate it as follows.

\begin{definition}\label{def:hirschlength}
The  
 \emph{Hirsch length $h(G)$} of an elementary amenable group is finite and equal to $n\ge0$ if $G$ has a series $1=G_{0}\normal G_{1}\normal\cdots\normal G_{r}=G$ in which the factors are either locally finite or infinite cyclic, and exactly $n$ factors are infinite cyclic. In all other cases, $h(G)=\infty$. Note that the Schreier refinement theorem shows this to be an invariant of the group independent of any choice of subnormal series.
\end{definition}

That this definition coincides with that of Hillman and Linnell \cite{hillmanlinnell} follows from Lemma \ref{thelemma:hirschlength} below. To explain this we appeal to an argument of Wehrfritz:

\begin{proposition}[Wehrfritz \cite{Wehrfritz1995}, (g)]
\label{bafw95g}
There is an integer-valued function $j(h)$ of $h$ only such that an elementary amenable group $G$ with $h(G)<\infty$ has characteristic subgroups $\tau(G)\le N \le M$ with $\tau(G)$ torsion, $N/\tau(G)$ torsion-free nilpotent, $M/N$ free abelian of finite rank and $|G : M|$ at most $j(h(G))$.
\end{proposition}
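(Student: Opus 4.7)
The plan is to build $\tau(G) \le N \le M$ in three stages, using the structure theory of elementary amenable groups of finite Hirsch length together with a Jordan-type theorem for linear groups to secure the explicit bound $j(h)$.

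First, I take $\tau(G)$ to be the locally finite radical of $G$, i.e.\ the join of all locally finite normal subgroups. This is characteristic and torsion, and exists in any group because joins of locally finite normal subgroups are again locally finite. By closure of the class of elementary amenable groups under quotients, $\bar G := G/\tau(G)$ is elementary amenable of Hirsch length $h$. Invoking the structure theory of elementary amenable groups of finite Hirsch length (Hillman--Linnell \cite{hillmanlinnell}), $\bar G$ contains a characteristic torsion-free soluble subgroup of finite index, the index being bounded by a function of $h$ alone.

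Second, I produce $N$ by taking the preimage in $G$ of the Fitting subgroup of such a torsion-free characteristic subgroup of $\bar G$, intersecting over the finitely many conjugates to make it characteristic in $\bar G$ itself. The quotient $N/\tau(G)$ is then torsion-free nilpotent of rank at most $h$. Third, the conjugation action of $\bar G$ on the Mal'cev completion $(N/\tau(G)) \otimes \Q$ embeds $\bar G / (N/\tau(G))$ (modulo a small kernel that can be absorbed) into a soluble subgroup of $\mathrm{GL}_h(\Q)$. The Jordan--Mal'cev theorem for soluble linear groups then provides a characteristic triangulable, and hence virtually finitely generated abelian, subgroup of bounded index; passing to its torsion-free part and pulling back to $G$ produces $M$ with $M/N$ free abelian of finite rank and $|G:M| \le j(h)$ for a function $j$ depending only on $h$.

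The chief obstacle is the bookkeeping required to keep every bound uniform in $h$: each of the three stages uses a passage to a characteristic subgroup of bounded index, and one must verify that these bounds compose into a single function. A secondary subtlety is that one really wants characteristic subgroups, not merely normal subgroups of bounded index, at each stage; the standard device of intersecting all subgroups of the same index handles this uniformly, provided one has enough finite-index control to begin with.
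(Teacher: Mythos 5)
This proposition is imported by the paper verbatim from Wehrfritz \cite{Wehrfritz1995}; the paper offers no proof of it, so there is no internal argument to compare yours against. Judged on its own terms, your sketch assembles the right ingredients (locally finite radical, Fitting subgroup, Mal'cev completion, Mal'cev--Zassenhaus bounds for soluble linear groups), but it has a circularity at the first stage. Hillman--Linnell give only the \emph{qualitative} statement that $G/\tau(G)$ is virtually torsion-free soluble; the assertion that the index of such a subgroup is ``bounded by a function of $h$ alone'' is not in their work --- it is essentially the content of Wehrfritz's theorem, i.e.\ of the statement you are trying to prove. The uniform bound has to be \emph{extracted} at the linear-group stage (the whole of $G/\tau(G)$, not merely a finite-index soluble piece, must be made to act on a $\Q$-vector space of dimension at most $h$ with controlled kernel, and only then do the Jordan/Mal'cev constants $j(h)$ appear); it cannot be fed in at stage one. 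As written, your stages one and two introduce finite indices with no bound in terms of $h$, so the final composition of bounds does not close up.

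There is a second, independent error: a triangularizable soluble subgroup of $\mathrm{GL}_h(\Q)$ is unipotent-by-diagonalizable, not ``virtually finitely generated abelian'' --- the upper triangular group already contains Heisenberg groups. To conclude that $M/N$ is free abelian of finite rank you must additionally argue that the unipotent part of the image of $G/N$ is trivial (or can be absorbed into $N$), which requires the self-centralizing/maximality property of the Fitting subgroup: any normal subgroup acting unipotently on the completion of $N/\tau(G)$ would pull back to a nilpotent normal subgroup properly containing it. You then still need to check that the resulting diagonalizable abelian quotient of finite Hirsch length is free abelian of finite rank rather than merely of finite rank with torsion. None of these points is fatal to the strategy --- it is, in outline, Wehrfritz's actual argument --- but as presented the proof assumes its hardest uniformity claim and misstates the conclusion of the linear-group reduction.
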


\begin{lemma}\label{thelemma:hirschlength}
Let $G$ be an elementary amenable group of infinite Hirsch length. Then there is a chain of subgroups $G_{0}<G_{1}<\dots<G_{n}<\cdots$ (indexed by the natural numbers) for which the sequence of Hirsch lengths $(h(G_{n}))$ increases without bound.
\end{lemma}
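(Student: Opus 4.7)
The plan is to proceed by transfinite induction on the level $\alpha$ at which $G$ first enters the elementary amenable hierarchy $\{\mathrm{EA}_\alpha\}$, where $\mathrm{EA}_0$ consists of the finite and abelian groups and, for $\alpha>0$, $\mathrm{EA}_\alpha$ is the closure of $\bigcup_{\beta<\alpha}\mathrm{EA}_\beta$ under group extensions and directed unions. Let $\mathcal Y$ denote the class of elementary amenable groups satisfying the conclusion of the lemma (vacuously when $h(G)<\infty$). I would verify that $\mathcal Y$ contains $\mathrm{EA}_0$ and is closed under both extensions and directed unions.

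For the base case, finite groups have $h=0$, and if $A$ is abelian with $h(A)=\infty$ then $A/\tau(A)$ has infinite torsion-free rank, so it contains a strictly ascending chain of free abelian subgroups of ranks $1,2,3,\ldots$; the preimages of these in $A$, being extensions of the locally finite group $\tau(A)$ by $\Z^n$, form the desired chain with Hirsch lengths $n$.

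For the extension step, suppose $N\mono G\epi Q$ with $N,Q\in\mathcal Y$ and $h(G)=\infty$. Schreier refinement yields $h(G)=h(N)+h(Q)$, so $h(N)=\infty$ or $h(Q)=\infty$. In the first case the induction hypothesis applied inside $N\le G$ produces the chain directly. In the second case, apply the induction hypothesis to $Q$ to obtain $Q_0<Q_1<\cdots$ with $h(Q_n)\to\infty$; the preimages $G_n\le G$ form a strictly ascending chain (the quotients differ) with $h(G_n)=h(N)+h(Q_n)\to\infty$.

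For the directed-union step, write $G=\bigcup_\alpha G_\alpha$ with each $G_\alpha\in\mathcal Y$ and $h(G)=\infty$. If some $G_\alpha$ already has $h(G_\alpha)=\infty$, the induction hypothesis inside $G_\alpha$ supplies the chain. Otherwise every $G_\alpha$ has finite Hirsch length, and I claim the supremum $\sup_\alpha h(G_\alpha)$ must be infinite; granting this, repeated use of directedness extracts a sequence $G_{\alpha_0}\le G_{\alpha_1}\le\cdots$ along which the Hirsch lengths strictly increase (forcing strict inclusions) and tend to infinity. The main obstacle is justifying this claim: if $\sup_\alpha h(G_\alpha)\le M$ were finite, one would have to deduce $h(G)\le M$, contradicting $h(G)=\infty$. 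My approach is to invoke Wehrfritz's structure theorem (Proposition \ref{bafw95g}) uniformly across the directed system, producing characteristic subgroups $\tau(G_\alpha)\le N_\alpha\le M_\alpha$ with $|G_\alpha:M_\alpha|\le j(M)$, with $N_\alpha/\tau(G_\alpha)$ torsion-free nilpotent of bounded Hirsch length and $M_\alpha/N_\alpha$ free abelian of bounded rank. Using the compatibility of these characteristic layers under inclusions $G_\alpha\le G_\beta$ (beginning with $\tau(G_\alpha)=G_\alpha\cap\tau(G_\beta)$ and proceeding similarly on the nilpotent and free abelian pieces), one assembles an analogous structure on the directed limit $G$ and concludes $h(G)\le M$, the required contradiction.
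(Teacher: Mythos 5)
Your overall strategy coincides with the paper's: both arguments induct on the level of $G$ in the elementary amenable hierarchy, dispose of the base and extension cases by elementary bookkeeping with Hirsch length, and reduce the directed-union case (equivalently, the case where all lower-level subgroups have finite Hirsch length) to the assertion that an elementary amenable group whose finite-Hirsch-length subgroups have uniformly bounded Hirsch length must itself have finite Hirsch length --- an assertion both you and the paper extract from Wehrfritz's structure theorem (Proposition \ref{bafw95g}). Your base case and extension case are correct as written.

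The one place where you go beyond what the paper writes down is in sketching \emph{how} to deduce $h(G)\le M$ from the uniform bound, and that sketch contains a step that fails: the claimed compatibility $\tau(G_\alpha)=G_\alpha\cap\tau(G_\beta)$ for $G_\alpha\le G_\beta$ is not true in general. Only the inclusion $G_\alpha\cap\tau(G_\beta)\le\tau(G_\alpha)$ is automatic; the reverse inclusion fails already for $G_\alpha = C_2 < G_\beta = D_\infty$ (a reflection inside the infinite dihedral group), where $\tau(G_\alpha)=C_2$ while $\tau(G_\beta)=1$. Consequently the characteristic layers furnished by Wehrfritz's theorem for the various $G_\alpha$ need not align under the inclusions of the directed system, and ``assembling an analogous structure on the limit'' cannot proceed as described. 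The repair is not to match up the layers at all but to use the local detection of Hirsch length: for elementary amenable groups one has $h(G)=\sup\{h(F):F\le G\ \text{finitely generated}\}$, which is essentially what Wehrfritz's argument cited in Proposition \ref{bafw95g} delivers; since every finitely generated subgroup of the directed union lies in some $G_\alpha$, the bound $h(G)\le M$ follows at once. With that substitution your proof is complete and agrees with the paper's.
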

\begin{proof}
For each ordinal $\alpha$, let $\mathcal X_{\alpha}$ be the class of elementary amenable groups defined by Hillman and Linnell \cite{hillmanlinnell}. Recall that $\mathcal X_{0}$ is the trivial groups' class, $\mathcal X_{1}$ consists of all abelian-by-finite groups, and $\mathcal X_{\alpha}$ is defined to be $(\cll\mathcal X_{\alpha-1})\mathcal X_{1}$ if $\alpha$ is a successor and $\bigcup_{\beta<\alpha}\mathcal X_{\beta}$ if $\alpha$ is a limit ordinal. 
(Here, $(\cll\mathcal X_{\alpha-1})\mathcal X_{1}$ is the class of those groups that are an extension of a group which is locally $\mathcal X_{\alpha-1}$ by a group which is in $\mathcal X_{1}$.)
A group is elementary amenable if and only if it belongs to some $\mathcal X_{\alpha}$, and the least such $\alpha$ provides a \emph{level} allowing arguments by transfinite induction. The level of $G$ is the least $\alpha$ for which $G$ belongs to $\mathcal X_{\alpha}$;
this is always a successor ordinal when $G$ is non-trivial.

Towards 
 a contradiction, let $G$ be an elementary amenable group of infinite Hirsch length which does not possess a strictly ascending chain of subgroups as is required by the lemma. Let $\alpha$ be the level of $G$. We may assume that groups of level less than $\alpha$ either have finite Hirsch length or admit an instance of the desired chain. If the latter happens then we are done. Hence we may assume that every subgroup of $G$ of level less than $\alpha$ has finite Hirsch length. If there is a bound on the Hirsch numbers of those subgroups of $G$ that have finite Hirsch length, then it follows that $G$ itself has finite Hirsch length (see Wehrfritz' argument, Proposition \ref{bafw95g}). So we may assume that there is no such bound on Hirsch lengths and now it is clear that we can find a chain of the required type.
\end{proof}

\subsection{A lower bound for homological dimension}

\begin{proof}[Proof of Theorem \ref{hd0}]
First note that, without interfering with the hypotheses,
 we may replace $G$ by $G/T$ where $T<G$ is the largest normal locally finite subgroup. So we assume that $T=1$. 
By Remark \ref{simp} 
there is no loss of generality in assuming that $G$ is finitely generated and therefore 
virtually soluble and minimax.
According to Lennox and Robinson (\cite{lennoxrobinson}, Theorem C), there are nilpotent subgroups $N$ and $S$ with $N$ normal and $S$ finitely generated such that $NS$ has finite index in $G$. Since $N$ is locally polycyclic it is clear that $\hd_{k}(N)= h(N)$. We also have 
$\hd_{k}(S)=h(S)$ (by \ref{constrcor}, for example). 
 The result follows because $\hd_{k}(G)$ is bounded below by both $\hd_{k}(N)$ and $\hd_{k}(S)$ and at least one of these is $\ge h(G)/2$.
\end{proof}

\subsection{Homological dimension in the abelian-by-polycyclic case}\label{LR}\ 

The Lennox--Robinson result can also be used to make another reduction. 
More generally, suppose $G=NS$ is the product of a normal nilpotent minimax group $N$  and a polycyclic group $S$.
Then $S$ acts on $N$ by conjugation and we may form the semidirect product $N\rtimes S$ which then admits a homomorphism to $G$ given by $(s,n)\mapsto sn$. The kernel of this homomorphism is isomorphic to $S\cap N<S$. In particular it is a polycyclic group, hence of type $\FP$ (indeed
it is virtually a Poincar\'e duality group over $\Z$).
If $G$ has no $k$-torsion
 Fel{$'$}dmans's result may be applied to deduce that $\hd_{k}(G)=\hd_{k}(N\rtimes S)-\hd_{k}(S\cap N)=\hd_{k}(N\rtimes S)-h(S\cap N)$ and so in order to establish that $\hd_{k}$ coincides with Hirsch length for $G$, it suffices to do the same for $N\rtimes S$.

We shall need a variation on the Lennox--Robinson splitting theorem. The following is preparatory for this. The \emph{upper finite radical} of an abelian group is the join of the subgroups that are upper finite. An abelian group $A$ is called \emph{upper finite} if it has the property that $A/B$ is finite whenever it is finitely generated. The upper finite radical of an abelian group is upper finite. If $A$ is an abelian minimax group with upper finite radical $U$ then $A/U$ is finitely generated and free abelian. More generally, a minimax group is called \emph{upper finite} when all of its finitely generated quotients are finite: the class of upper finite groups is a radical class, meaning that the join of all subnormal upper finite subgroups of a group is upper finite. In a soluble minimax group $G$, the upper finite radical has least Hirsch length amongst normal subgroups $K$ such that the quotient $G/K$ is polycyclic. The following proposition uses the spirit of this definition: it can be interpreted as a ``near supplement'' result for the upper finite radical of a minimax group. Further near supplement and near complement results of this kind can be found in \cite{KK2013}. Theorem \ref{hd} can be interpreted as a statement about minimax groups $G$ in which the upper finite radical is virtually abelian.

\begin{proposition}\label{xxxxxxxx}
Let $G$ be a minimax group  with a normal subgroup $K$ such that $G/K$ is polycyclic. Then $G$ has a polycyclic subgroup $H$ such that  $HK$ has finite index in $G$.
\end{proposition}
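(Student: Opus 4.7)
The plan is to exploit the near-supplement property of the upper finite radical, combined with the Lennox--Robinson decomposition used in the proof of Theorem \ref{hd0}.

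First, I would reduce to the case that $G$ is soluble, by replacing $G$ by a characteristic soluble subgroup of finite index and $K$ by its intersection with this subgroup; the quotient embeds as a finite-index subgroup of $G/K$, so it remains polycyclic. Next, let $U$ denote the upper finite radical of $G$. Since $UK/K \cong U/(U \cap K)$ is simultaneously a quotient of the upper finite group $U$ and a subgroup of the polycyclic group $G/K$, and since any finitely generated upper finite group is finite, $U/(U \cap K)$ is finite. Hence $G/(U\cap K)$ sits in a short exact sequence
\[
1 \to U/(U\cap K) \to G/(U\cap K) \to G/U \to 1
\]
with finite kernel and polycyclic quotient, and is therefore itself polycyclic.

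Because $G/(U\cap K)$ is polycyclic it is finitely generated, and we may lift a finite generating set to a finitely generated subgroup $H_{0} \le G$ with $H_{0}\cdot(U\cap K) = G$; in particular $H_{0}K$ has finite index in $G$. If $H_{0}$ were automatically polycyclic we would be done, but a finitely generated subgroup of a soluble minimax group need not be polycyclic, as $BS(1,2) = \Z[1/2]\rtimes\Z$ illustrates. To remedy this, apply the Lennox--Robinson theorem to write $G = NS\cdot F$ with $N$ a normal nilpotent minimax subgroup, $S$ a finitely generated nilpotent (hence polycyclic) subgroup, and $F$ finite. The image $NK/K$ is a nilpotent subgroup of the polycyclic group $G/K$, hence finitely generated nilpotent; lifting a finite generating set of $NK/K$ produces a finitely generated (polycyclic) subgroup $N_{0} \le N$ with $N_{0}K/K = NK/K$. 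Set $H := \langle N_{0},S\rangle$; then $HK/K \supseteq NSK/K$ has finite index in $G/K$, so $HK$ has finite index in $G$.

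The main obstacle is verifying that $H$ is polycyclic. Since $N_{0}$ need not be normalised by $S$, the subgroup $H$ contains the $S$-normal closure $N_{0}^{S}$ of $N_{0}$ in $N$, and this closure can easily fail to be finitely generated — already for $N = \Z[1/2]$ under the doubling action of $\Z$, a single lift generates the whole of $N$ under the $\Z$-action. The way around this is to pass to the polycyclic quotient $G/(U\cap K)$: the image of $N_{0}^{S}$ lies in the finitely generated nilpotent group $N/(N\cap U\cap K)$, so $N_{0}^{S}$ is polycyclic modulo the finite subgroup $U/(U\cap K)$. By choosing the lift $N_{0}$ compatibly with a torsion-free complement of $N\cap U$ inside $N$ (using Wehrfritz's structural result, Proposition \ref{bafw95g}), one arranges that the correction by $U$ is actually trivial, so that $N_{0}^{S}$ is polycyclic and hence so is $H$. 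This is the step that I expect to require the most care.
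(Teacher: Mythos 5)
Your opening reduction is sound: $UK/K$ is a finitely generated quotient of the upper finite group $U$, hence finite, so $G/(U\cap K)$ is polycyclic. But this does not simplify the problem --- you are left with exactly the original task, namely producing a polycyclic \emph{subgroup} of $G$ that nearly supplements a normal subgroup with polycyclic quotient --- and the gap lies in the step you yourself flag. The sentence ``$N_0^S$ is polycyclic modulo the finite subgroup $U/(U\cap K)$'' conflates $U/(U\cap K)$ with $N_0^S\cap U\cap K$: the image of $N_0^S$ in the polycyclic group $G/(U\cap K)$ is $N_0^S/(N_0^S\cap U\cap K)$, and that intersection can be infinite. Concretely, let $N=\Z[1/2]\times\Z$, let $S=\langle t\rangle$ act by $t\cdot(a,n)=(2a+n,n)$, and take $G=N\rtimes S$ with $K=U=\Z[1/2]\times 0$, so $G/K\cong\Z^2$. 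For the lift $N_0=\langle(0,1)\rangle$ one computes $t^{-k}\cdot(0,1)=(-1+2^{-k},1)$, whence $N_0^S=N$ and $\langle N_0,S\rangle=G$, which is not polycyclic. A good lift does exist here (the element $(-1,1)$ is fixed by $t$), but only because there happens to be an $S$-invariant complement; your recipe of choosing $N_0$ inside a torsion-free complement of $N\cap U$ in $N$ does not locate it, since what is needed is an $S$-invariant (or near-invariant) complement, and Wehrfritz's Proposition \ref{bafw95g} supplies only a characteristic filtration, never a splitting. The existence of such a near complement is precisely the content of the proposition, so it cannot be ``arranged'' by a judicious choice of generators without further argument.

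The paper's proof meets this difficulty directly: after an induction on $h^*(G)$ that strips away normal subgroups with large normalizer, and a finite-presentability argument that disposes of infinite torsion kernels, it reduces to the case where $G$ is just-non-polycyclic in the sense of Robinson and Wilson and then invokes their near-splitting theorem (\cite{robinsonwilson}, 2.4(iv)). Some appeal to that theorem, or to the cohomological vanishing underlying it, appears unavoidable; your outline defers exactly this to ``the step that requires the most care'', which is to say the essential step is missing.
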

\begin{proof} 
It suffices to construct a  polycyclic subgroup $H$ such that $HK$ has the same Hirsch length as $G$. For if $H$ is such a subgroup
then $HK/K$ is a subgroup of the polycyclic
group $G/K$, and having the same Hirsch length it must be of finite index. 

We focus on Hirsch length and proceed by induction on
 $h^{*}(G)$.
If $h^{*}(G)=0$ then $G$ is finite. Assume that $h^{*}(G)$ is non-zero. Let $A$ be an infinite subgroup with $h^{*}(A)$ as small as possible subject to $|G:N_{G}(A)|<\infty$. Replacing $G$ by a subgroup of finite index we may assume that $A$ is normal in $G$. By induction, there is a subgroup $L$ containing $A$ such that $L/A$ is polycyclic and $h(LK)=h(G)$. If $L$ has infinite index in $G$ then induction supplies a subgroup $H$ such that $H(K\cap L)$ has the same Hirsch length as $L$ and $HK$ now has the same Hirsch length as $G$. Therefore we may assume that for all choices of $A$, $G/A$ is polycyclic. 
If there is an infinite abelian torsion subgroup $A$ normal in $G$ such that $G/A$ is polycyclic then let $F$ be a finite subset of $G$ whose images in $G/A$ generate $G/A$ and let $P$ be the subgroup of $G$ generated by $F$. Since polycyclic groups are finitely presented, we deduce that $P\cap A$ is finitely generated as a $G$-operator group and therefore that $P\cap A$ has finite exponent. In this case $P$ is polycyclic and has the same Hirsch length as $G$ so we are done. Therefore we may assume that there are no torsion choices for $A$.

This implies that $G$ is just-non-polycyclic in the sense of Robinson and Wilson \cite{robinsonwilson} and our conclusion follows from the near-splitting result (\cite{robinsonwilson}, 2.4(iv)).
\end{proof}

\begin{proof}[Proof of Theorem \ref{hd}]
As in the proof of Theorem \ref{hd0}, 
we can reduce to the case where $G$ is finitely generated. 
Also we may assume that $G$ has a torsion-free abelian normal 
subgroup $A$ of finite Hirsch length such that $G/A$ is polycyclic. 
It follows that $A$ is a minimax group. 
Using the above splitting result and replacing $G$ by a subgroup of finite index, we may assume that $G=AS$ for some polycyclic subgroup $S$. 
As outlined in paragraph \ref{LR}, we may replace $G$ by the semidirect product $A\rtimes S$ in which the action of $S$ on $A$ is the same as its conjugation action within $G$, and with no loss of generality.

We shall need to appeal to the structure theory of torsion-free abelian minimax groups. If such a group has Hirsch length $h$, then
it is isomorphic to a subgroup of the additive group $\Z [1/m]^h$ for some natural number $m$.

Let $B<A$ be a free abelian subgroup of $A$ of rank $h$ and let $\phi$ be the endomorphism of $B$ given by $b\mapsto b^{m}$. Let $C$ denote the ascending HNN-extension $B_{B,\phi}$. The normal closure of $B$ in $C$ is the kernel of the obvious map from $C$ to the infinite 
cyclic group generated by the stable letter $t$ of this HNN extension. This kernel is isomorphic to $A\otimes \Z[1/m]$. Thus
we have a natural inclusion 
$$A\hookrightarrow C \cong \big( A\otimes \Z[1/m]\big) \rtimes \<t\>.$$
A key point to observe is that $C$ is torsion-free and constructible, so it is an inverse duality group over $\Z$ and Proposition \ref{constr}
applies. Indeed $C$ has a finite classifying space, namely the mapping torus of the endomorphism of the $h$-dimensional torus given by 
$x\mapsto x^m$.

The action of $S<G$ on $A$ by conjugation extends to an action on 
$A\otimes\Z[1/m]$ and then, 
by allowing the elements of $S$ to act trivially on the stable letter $t$, to $C$. This last action is well-defined because $\phi$ is central
in ${\rm{Aut}}(A\otimes \Z[1/m])$. We form the semidirect product $E = C\rtimes S$ and consider the commutative diagram relating $G$ and $E$.
\[\xymatrix{
A\ar@{>->}[r]\ar[d]&G\ar@{>>}[r]\ar[d]&S\ar@{=}[d]\\
C\ar@{>->}[r]&E\ar@{>>}[r]&S.
}\] 
Regarding Hirsch length, we have $h(C)=h(A)+1$ and hence $h(E)=h(G)+1$. Since $C$ is an inverse duality group and $\hd_{k}(S)=h(S)$ is finite, the hypotheses of Theorem \ref{feld}(ii) are satisfied and we obtain $\hd_{k}(E)=\hd_{k}(C)+h(S)$. Moreover, according to Proposition \ref{constr}, $\hd_{k}(C)=h(C)$. 

Finally, note that $h(G)$ and $\hd_k(G)$ remain unchanged if we replace $G=A\rtimes S$ by $\hat G = (A\otimes\Z[1/m])\rtimes S$.
This is because $\hat G$ is an ascending (directed) union of groups each of which contains $G$ as a subgroup of finite index.
Then $E$ is an extension of $\hat G$ by $\<t\>$ and so $\hd_{k}(E)\le\hd_{k}(\hat G)+1$.
Putting all this together we have
$\hd_{k}(G)\ge\hd_{k}(E)-1=\hd_{k}(C)-1+h(G/A)=h(C)-1+h(G/A)=h(A)+h(G/A)=h(G)$.
The reverse inequality $\hd_{k}(G)\le h(G)$ is well-known, see for example (\cite{bieriqmc}, Proposition 7.11). 
\end{proof}

\section{Filtered Colimits and Homological Dimension One}

In this part of the paper we fix a non-zero commutative ring $k$ and consider the following

\begin{conjecture}\label{mainII}
Every group $G$ of homological dimension one over $k$ is a filtered colimit of groups of cohomological dimension one over $k$.
\end{conjecture}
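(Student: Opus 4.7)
The natural plan is to present $G$ as the directed union of its finitely generated subgroups and then argue that each such subgroup itself has cohomological dimension at most one over $k$. First, write $G = \colim_\lambda H_\lambda$, where $\{H_\lambda\}$ is the filtered system of finitely generated subgroups of $G$ under inclusion. By Remark~\ref{simp}, homological dimension equals the supremum of $\hd_k(H)$ over finitely generated subgroups $H \le G$, so each $H_\lambda$ inherits $\hd_k(H_\lambda) \le 1$. The conjecture would therefore follow from the implication: every finitely generated group $H$ with $\hd_k(H) \le 1$ satisfies $\cd_k(H) \le 1$.

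The heart of the argument is thus to close the gap between $\hd_k$ and $\cd_k$ for finitely generated groups in dimension one. For $k=\Z$ this is the assertion that a finitely generated group of homological dimension one is free, a strengthening of Stallings--Swan, which handles only the case $\cd_\Z = 1$. Since $\cd_k \le \hd_k+1$ for countable groups, one always has $\cd_k(H) \le 2$; the task is to exclude $\cd_k(H) = 2$. The plan is to attack this by ends. Because $\hd_k(H) < \infty$ forces $H$ to be $k$-torsion-free, Stallings' structure theorem applies: if $H$ has more than one end, it splits as a nontrivial free product (the HNN case over a finite subgroup being ruled out by torsion-freeness), and induction on the rank in a Grushko decomposition, combined with monotonicity of $\hd_k$ under free factors, reduces to groups of strictly smaller complexity.

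The principal obstacle is the one-ended case. Here Stallings' theorem supplies no decomposition, and the hypothesis $\hd_k(H) \le 1$ merely asserts that the kernel of a free presentation of $H$ is $kH$-flat, which is much weaker than projectivity --- especially over fields of positive characteristic, where divisibility phenomena in $H$ (of the sort illustrated in Part I for groups containing copies of $\Z[1/p]$) can interact with the module structure to defeat naive arguments. A plausible route is to combine the Fel{$'$}dman--Bieri Theorem~\ref{feld} with a transfer to the residue fields $k = \F_p$, and then argue that flatness of the relation module is incompatible with one-endedness for a finitely generated $k$-torsion-free group. As a partial result, the conjecture can be secured under the extra assumption that every finitely generated subgroup of $G$ is finitely presented, since for such groups the condition $\hd_k \le 1$, together with $\cd_k \le 2$, forces type $\FP$ over $k$ and hence $\hd_k = \cd_k$. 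Removing this finite presentation hypothesis is where any complete proof of the conjecture must bring in genuinely new input.
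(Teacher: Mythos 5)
The statement you are trying to prove is stated in the paper as an open conjecture: the paper offers no proof of it, establishes it only for elementary amenable groups (Proposition \ref{34}), and notes that already for $k=\Z$ it would imply Conjecture \ref{B} (groups of homological dimension one over $\Z$ are locally free), whose failure would refute the Atiyah conjecture. So no argument along the lines you sketch, or any other, is known to close this.

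More importantly, your opening reduction is not merely incomplete but false for general $k$, and the paper itself supplies the counterexample in \S\ref{hdo}: the lamplighter group $C_p\wr C_\infty$, with $p$ invertible in $k$, is a finitely generated group with $\hd_k=1$ and $\cd_k=2$. Hence the implication you propose to prove --- every finitely generated $H$ with $\hd_k(H)\le 1$ has $\cd_k(H)\le 1$ --- cannot hold, and $G$ cannot in general be exhibited as the directed union of its finitely generated subgroups with $\cd_k\le 1$. The same example shows that your ``principal obstacle,'' the one-ended case, is not an obstacle to be overcome but the place where the strategy provably breaks: by Dunwoody's theorem \cite{dunwoody1979} a finitely generated group of $\cd_k\le1$ is the fundamental group of a finite graph of finite groups, hence virtually free, hence never one-ended, whereas $C_p\wr C_\infty$ is one-ended with $\hd_k=1$. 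Any correct colimit presentation must therefore use approximating groups that are \emph{not} subgroups of $G$ and structure maps with non-trivial kernels; this is exactly what the paper does in the elementary amenable case, approximating $G$ by HNN extensions and amalgams of its finite subgroups (virtually free, generally non-amenable groups mapping onto pieces of $G$). Your closing partial result is correct as far as it goes: if every finitely generated subgroup of $G$ is finitely presented, then $\FP_2$ together with $\cd_k\le\hd_k+1\le 2$ forces type $\FP$ over $k$, hence $\cd_k=\hd_k\le1$ locally --- but this hypothesis excludes precisely the groups, such as the lamplighter, where the conjecture has content.
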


By a filtered colimit system, we mean a colimit system over a filtered category. A homological simplification arising from this is that the colimit functor $\colim$, when applied to abelian groups or modules, is exact and in particular, if $(G_{\lambda})$ is a filtered colimit system of groups and $(M_{\lambda})$ is a compatible system of modules, then the natural map
\[\colim H_{n}(G_{\lambda},M_{\lambda})\to H_{n}(\colim G_{\lambda},\colim M_{\lambda})\] is an isomorphism for all $n$. This has the important consequence:
\begin{lemma}\label{hcolim}
$\hd_{k}(\colim G_{\lambda})\le\sup_{\lambda}\hd_{k}(G_{\lambda})$.
\end{lemma}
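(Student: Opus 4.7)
The plan is to read off the lemma directly from the isomorphism $\colim H_{n}(G_{\lambda},M_{\lambda})\iso H_{n}(\colim G_{\lambda},\colim M_{\lambda})$ just cited in the excerpt. Set $G:=\colim G_{\lambda}$ and $d:=\sup_{\lambda}\hd_{k}(G_{\lambda})$. If $d=\infty$ there is nothing to prove, so I may assume $d<\infty$. The goal is then to show that $H_{d+1}(G,M)=0$ for every $kG$-module $M$.

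Given such an $M$, I would restrict along each structural homomorphism $G_{\lambda}\to G$ to regard $M$ as a $kG_{\lambda}$-module, calling this $M_{\lambda}$. For any arrow $\lambda\to\mu$ in the indexing filtered category, the action of $G_{\lambda}$ on $M$ factors through $G_{\mu}$, so the identity $M\to M$ is a morphism of $kG_{\lambda}$-modules from $M_{\lambda}$ to $M_{\mu}|_{G_{\lambda}}$. This makes $(M_{\lambda})$ into a compatible system in the sense required by the stated colimit formula, and its colimit is visibly $M$ itself.

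Applying the displayed isomorphism with $n=d+1$ yields
\[
H_{d+1}(G,M)\iso\colim H_{d+1}(G_{\lambda},M_{\lambda}).
\]
For every $\lambda$, the definition of homological dimension together with $\hd_{k}(G_{\lambda})\le d<d+1$ forces $H_{d+1}(G_{\lambda},M_{\lambda})=0$. The colimit of a system of zero groups is zero, so $H_{d+1}(G,M)=0$. As $M$ was arbitrary, $\hd_{k}(G)\le d$, which is the required inequality.

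There is no real obstacle here: the only mildly delicate point is checking that the trivially compatible family $(M_{\lambda})$ built by restriction really does have colimit $M$, which is immediate because all transition maps are identities. Everything else is a direct application of the exactness of filtered colimits of abelian groups, which is exactly what was invoked to obtain the preceding isomorphism.
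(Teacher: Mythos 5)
Your argument is correct and is precisely the paper's intended one: Lemma \ref{hcolim} is stated as an immediate consequence of the displayed isomorphism $\colim H_{n}(G_{\lambda},M_{\lambda})\iso H_{n}(\colim G_{\lambda},\colim M_{\lambda})$, applied exactly as you do to the constant compatible system obtained by restricting a given module $M$ (compare the proof of Lemma \ref{fc}, which runs the same way). Your extra care in checking that the restricted system has colimit $M$ is a harmless elaboration of what the paper leaves implicit.
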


We shall see that the conjecture holds in case $G$ is elementary amenable but that even in this situation, the groups involved in the colimit system may by necessity be non-amenable. We shall also see that the conjecture becomes false if dimension one is replaced by higher dimensions. 

Before beginning the analysis we review the spectral sequences and exact sequences associated with limits (i.e. inverse limits) and their derived functors. Limits arise naturally when considering filtered colimit systems of groups because group cohomology is contravariant as a functor of the group argument. While the colimit functor over a filtered category with values in an abelian category is exact, the limit functor $\lim$ has derived functors and so has a dimension which turns out to depend on the effective cardinality of the limit system.
These classical facts are found in Jensen's treatise, see especially (\cite{Jensen}, Th\'eor\`eme 4.2). In our context this may be stated as follows.
\begin{theorem}
Given a filtered colimit system of groups $(G_{\lambda})$ there is a first quadrant spectral sequence with $E^{p,q}_{2}=\lim^{p}H^{q}(G_{\lambda},M)$ converging to $H^{p+q}(\colim G_{\lambda}, M)$ for any module $M$ over the colimit $\colim G_{\lambda}$.
\end{theorem}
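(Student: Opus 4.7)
The plan is to realise this spectral sequence as an instance of the Grothendieck composite-functor spectral sequence. Let $\Lambda$ denote the filtered indexing category, so that $G=\colim_{\lambda\in\Lambda}G_{\lambda}$, and let $\operatorname{Fun}(\Lambda^{op},\operatorname{Ab})$ be the abelian category of inverse systems of abelian groups indexed by $\Lambda$. Define
\[F_{1}\colon \Z G\text{-Mod}\longrightarrow \operatorname{Fun}(\Lambda^{op},\operatorname{Ab}),\qquad F_{1}(M)=\bigl(M^{G_{\lambda}}\bigr)_{\lambda},\]
and let $F_{2}=\lim$ be the inverse limit. Both functors are left exact, and because $G=\colim G_{\lambda}$ a compatible family of $G_{\lambda}$-fixed vectors is exactly the same data as a $G$-fixed vector. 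Hence the composite $F_{2}\circ F_{1}$ is the $G$-fixed-point functor $(-)^{G}$, whose right derived functors are $H^{*}(G,-)$.

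Next I would identify $R^{q}F_{1}(M)=\bigl(H^{q}(G_{\lambda},M)\bigr)_{\lambda}$. The crucial input is that restriction from $\Z G$-modules to $\Z G_{\lambda}$-modules preserves injectives: its left adjoint $\Z G\otimes_{\Z G_{\lambda}}(-)$ is exact because $\Z G$ is a free right $\Z G_{\lambda}$-module. Thus any injective resolution $M\mono I^{\bullet}$ over $\Z G$ restricts to an injective resolution over each $\Z G_{\lambda}$, and the cohomology of $(I^{\bullet})^{G_{\lambda}}$ computes $H^{q}(G_{\lambda},M)$ in each degree. The derived functors of $F_{2}$ are the higher inverse limits $\lim^{p}$.

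The main obstacle is to verify the hypothesis of the Grothendieck spectral sequence: that $F_{1}$ sends injectives to $F_{2}$-acyclic inverse systems. Since every injective $\Z G$-module is a direct summand of a coinduced module $\operatorname{Hom}_{\Z}(\Z G,N)$ with $N$ a divisible abelian group, and $\lim^{p}$ commutes with direct summands, it suffices to treat this case. A Mackey-style identification yields $M^{G_{\lambda}}=\operatorname{Hom}_{\Z}(\Z[G/G_{\lambda}],N)$, with transitions induced by the quotient maps $G/G_{\lambda}\twoheadrightarrow G/G_{\mu}$ for $\lambda\le\mu$. Filteredness of $\Lambda$ collapses the colimit of coset spaces to a single point, so $\colim_{\lambda}\Z[G/G_{\lambda}]=\Z$. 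Jensen's computations of derived inverse limits for $\operatorname{Hom}$-duals of direct systems of free abelian groups then give $\lim^{p}_{\lambda}\operatorname{Hom}_{\Z}(\Z[G/G_{\lambda}],N)\cong\ext^{p}_{\Z}(\Z,N)$, which vanishes for $p>0$ because $N$ is $\Z$-injective.

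With this acyclicity secured, the Grothendieck spectral sequence delivers
\[E_{2}^{p,q}=R^{p}F_{2}\bigl(R^{q}F_{1}(M)\bigr)=\lim^{p}_{\lambda}H^{q}(G_{\lambda},M)\ \Longrightarrow\ R^{p+q}(F_{2}F_{1})(M)=H^{p+q}(G,M),\]
which is the stated spectral sequence.
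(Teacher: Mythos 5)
Your overall architecture --- the Grothendieck spectral sequence for the factorisation $(-)^{G}=\lim\circ F_{1}$, with acyclicity of $F_{1}(\text{injective})$ checked via the interchange $\lim^{p}\operatorname{Hom}_{\Z}(A_{\lambda},N)\iso\ext^{p}_{\Z}(\colim A_{\lambda},N)$ for direct systems of free abelian groups --- is sound, and since the paper offers no proof of this theorem (it simply cites Jensen, Th\'eor\`eme 4.2) there is nothing to compare line by line. However, there is a genuine gap in your identification $R^{q}F_{1}(M)=\bigl(H^{q}(G_{\lambda},M)\bigr)_{\lambda}$. You argue that restriction along $G_{\lambda}\to G$ preserves injectives because $\Z G$ is a free right $\Z G_{\lambda}$-module; this is true only when the canonical map $G_{\lambda}\to G$ is \emph{injective}. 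The theorem is stated, and is used in the paper, for arbitrary filtered systems: in Part II the spectral sequence is applied to the telescope $B(p,q)\buildrel\phi\over\to B(p,q)\to\cdots$, whose maps to the colimit have non-trivial (free) kernels $K_{\lambda}$. In that situation $\Z G$ is free over the image $\Z\bar G_{\lambda}$ but is not flat over $\Z G_{\lambda}$, and Shapiro's lemma gives
\[
H^{q}\bigl(G_{\lambda},\operatorname{Hom}_{\Z}(\Z G,N)\bigr)\iso\prod H^{q}(K_{\lambda},N),
\]
a product indexed by $\bar G_{\lambda}\backslash G$, which is non-zero for $q=1$ and $N=\Q/\Z$ as soon as $K_{\lambda}\neq1$. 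So the terms of a $\Z G$-injective resolution are not $H^{*}(G_{\lambda},-)$-acyclic, your $E_{2}$-page $\lim^{p}R^{q}F_{1}(M)$ does not agree with $\lim^{p}H^{q}(G_{\lambda},M)$, and your argument proves the theorem only for directed unions of subgroups.

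The standard repair (essentially Jensen's argument) avoids injectives on the group side. Take the bar resolutions $C_{\bullet}(G_{\lambda})$, which are functorial in $G_{\lambda}$, and form the double complex of inverse systems $\operatorname{Hom}_{\Z G_{\lambda}}(C_{\bullet}(G_{\lambda}),M)\iso\operatorname{Hom}_{\Z}(\Z[G_{\lambda}^{\bullet}],M)$. Each column is $\lim$-acyclic by exactly the $\lim$--$\ext$ interchange you already invoke, now applied to the direct systems of free abelian groups $\Z[G_{\lambda}^{n}]$ with colimit the free group ring $\Z[G^{n}]$. Since $\colim_{\lambda}\bigl(C_{\bullet}(G_{\lambda})\otimes_{\Z G_{\lambda}}\Z G\bigr)$ is a free resolution of $\Z$ over $\Z G$ (this works even for non-injective maps, as induction sends free modules to free modules), the inverse limit of the columns computes $H^{*}(G,M)$, and the second spectral sequence of the double complex is the one asserted. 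If you only intend the case of a directed union of subgroups, your proof is complete as written.
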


 In particular, if the colimit system is countably indexed then $\lim^{i}$ vanishes for $i>1$ and the spectral sequence simplifies to a set of short exact sequences
\[0\to\lim^{1}H^{n-1}(G_{\lambda},M)\to H^{n}(\colim G_{\lambda}, M)
\to\lim H^{n}(G_{\lambda},M)\to0.\] This provides a source of examples of countable groups for which $\cd_{k}=\hd_{k}+1$.

When the filtered colimit system is indexed by a set of cardinality $\aleph_{m}$ (for a natural number $m$) then $\lim^{i}$ vanishes for $i>m+1$. This provides a source of examples of groups of cardinality $\aleph_{m}$ for which $\cd_{k}=\hd_{k}+m+1$. 
We shall briefly review some of the history of such examples but first we record how the vanishing of $\lim^{i}$
 gives rise to the cohomological version of Lemma \ref{hcolim}:
\begin{lemma} \label{ccolim}
Let $\Lambda$ be a small filtered category of cardinality $\aleph_{m}$. Let $(G_{\lambda})$ be a filtered colimit system of groups over $\Lambda$. Then 
$\cd_{k}(\colim G_{\lambda})\le\sup_{\lambda}\cd_{k}(G_{\lambda})+m+1$.
\end{lemma}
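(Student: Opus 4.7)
The plan is to deduce the bound directly from the first-quadrant spectral sequence
\[E_2^{p,q}=\lim^{p}H^{q}(G_{\lambda},M)\Longrightarrow H^{p+q}(\colim G_{\lambda},M)\]
recorded in the theorem immediately preceding the statement. Fix an arbitrary $k(\colim G_{\lambda})$-module $M$ and set $d:=\sup_{\lambda}\cd_{k}(G_{\lambda})$.

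The key step is to pin down the rectangle in which $E_2^{p,q}$ can possibly be non-zero. In the vertical (cohomology) direction, the definition of cohomological dimension gives $H^{q}(G_{\lambda},M)=0$ for every $\lambda$ whenever $q>d$; since $\lim^{p}$ is an additive functor, applying it to the identically zero system yields $E_2^{p,q}=0$ for all such $q$. In the horizontal direction, the bound from Jensen recalled in the passage above the statement says that on filtered systems indexed by a category of cardinality $\aleph_{m}$ the derived functor $\lim^{p}$ vanishes for $p>m+1$, so $E_2^{p,q}=0$ for all such $p$ as well.

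Consequently every $E_{\infty}^{p,q}$ with $p+q>d+m+1$ is zero, which forces the associated filtration on $H^{n}(\colim G_{\lambda},M)$ to be trivial for $n>d+m+1$. Since $M$ was arbitrary, this gives $\cd_{k}(\colim G_{\lambda})\le d+m+1=\sup_{\lambda}\cd_{k}(G_{\lambda})+m+1$. With both vanishing inputs already in hand --- one definitional, one quoted from Jensen --- there is no real obstacle; the argument is a one-line spectral-sequence bookkeeping exercise, exactly parallel to how the countable case stated just above the lemma follows from the short exact sequences into which the spectral sequence collapses when $m=0$.
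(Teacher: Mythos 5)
Your argument is correct and is exactly the one the paper intends: the lemma is stated immediately after recalling Jensen's spectral sequence and the vanishing of $\lim^{p}$ for $p>m+1$, and the paper leaves the same two-line bookkeeping (vanishing of $E_2^{p,q}$ outside the rectangle $p\le m+1$, $q\le\sup_\lambda\cd_k(G_\lambda)$) to the reader. No gaps.
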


\medskip

For the next three subsections, we work entirely with the coefficient ring $\Z$.

\subsection{Torsion-free nilpotent groups of finite rank}\ 

Gruenberg used the $\lim$-$\lim^{1}$ methodology to show that the cohomological dimension of a torsion-free nilpotent $G$ is $h(G)$ or $h(G)+1$ according to whether 
 $G$ is finitely generated or not finitely generated. See (\cite{gruenberg}, \S8.8, Theorem 5).

\subsection{A finitely generated metabelian group with $\hd=3$ and $\cd=4$}\label{sse:34} 
An alternative approach to the $\lim$-$\lim^{1}$ method uses the universal coefficient theorem in case $k=\Z$. This says that for a $\Z G$-module $M$ with trivial $G$-action there are short exact sequences
\[0\to\ext(H_{n}(G,\Z),M)\to H^{n+1}(G,M)\to\hom(H_{n+1}(G,\Z),M)\to0.\] In particular if $G$ is a countable 
 group of homological dimension $n$ and $H_{n}(G,\Z)$ is not a free abelian group then $\cd_{\Z}G=n+1$. Bieri used this argument to give an example of a group of cohomological dimension 4 and homological dimension 3 in his paper \cite{bieri1972}. His example is the split exension of the additive group $\Q^{2}$ by an infinite cyclic group $\Z$ with action $n\cdot(a,b)=(n^{-1}a,nb)$: the point here is that the group is constructed in order to ensure that $H_{3}(G,\Z)\iso\Q$. 

\subsection{A finitely generated metabelian group with $\hd=2$ and $\cd=3$}\label{HH} The construction of \S\ref{sse:34} cannot be used to address the case cohomological dimension 3 and homological dimension 2.
So
work continued on soluble groups and the matrix group 
\[H=
\left\{\begin{pmatrix}
(\tfrac23)^{j}&0\\\ell&1
\end{pmatrix}
;\ j\in\Z,\ell\in\Z[\tfrac16]\right\}
\]
emerged as a key example because for a period of time it remained a question whether this group had cohomological  
dimension 2 or 3. Gidlenhuys settled this by showing that the cohomological dimension is indeed $3$ for this group, see \cite{gildenhuys}. 
One can see that $\hd(H)=2$ by observing that $H\cong \Z[1/6]\rtimes\Z$, so in particular $H$ is locally-cyclic by cyclic. Alternatively
one could observe that it has Hirsch length $2$ and appeal to the results in Part I.  The group $H$ is the colimit that one obtains in the
following proposition when $(p,q)=(2,3)$.

\begin{proposition}
Let $p$ and $q$ be coprime integers neither of which equals $0$, $1$, or $-1$. Then $B(p,q):=\langle a,b;\ b^{-1}a^{p}b=a^{q}\rangle$ has cohomological dimension $2$ and admits a surjective homomorphism $\phi$ such that the colimit $G$ 
 of the sequence 
 $B(p,q)\buildrel\phi\over\to B(p,q)\buildrel\phi\over\to B(p,q)\buildrel\phi\over\to\dots$ is metabelian and 
 $\cd_\Z(G)=3$, but $\hd_\Z(G)=2$.  
 \end{proposition}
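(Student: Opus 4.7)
The plan is to handle the easy claims first and then focus on the identification of the colimit and the lower bound on cohomological dimension, which is the hardest part.

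I would start by establishing $\cd_{\Z} B(p,q) = 2$: the group $B(p,q)$ is a torsion-free one-relator group (equivalently, an HNN-extension of $\langle a\rangle\cong\Z$ with stable letter $b$ and associated subgroups $\langle a^{p}\rangle, \langle a^{q}\rangle$), so Lyndon's identity theorem gives an aspherical presentation $2$-complex. The assignment $a\mapsto a^{p}$, $b\mapsto b$ preserves the relator, since $\phi(b^{-1}a^{p}b) = b^{-1}a^{p^{2}}b = (b^{-1}a^{p}b)^{p} = a^{pq} = \phi(a^{q})$, so $\phi$ is a well-defined endomorphism. It is surjective because $\gcd(p,q)=1$ together with $a^{q} = b^{-1}a^{p}b$ gives $a \in \langle a^{p},b\rangle$ by B\'ezout.

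Next I would identify the colimit $G$ explicitly as $\Z[1/pq]\rtimes\Z$, with the generator $t$ of $\Z$ acting by multiplication by $q/p$. Writing $a_{n}, b_{n}$ for the generators of the $n$-th copy, the colimit identifications collapse all $b_{n}$ to a single element $b$ and yield $a_{n} = a_{n+1}^{p}$; combined with the HNN-relation in copy $n+1$ this gives $b^{-1}a_{n}b = a_{n+1}^{q}$. Identifying $\bigcup_{n}\langle a_{n}\rangle$ with $\Z[1/p]$ via $a_{n}\leftrightarrow 1/p^{n-1}$, conjugation by $b^{-1}$ is multiplication by $q/p$. A B\'ezout argument (iterating $\gcd(q-p,p)=1$) shows that the additive span of the iterated conjugates $b^{k}a_{n}b^{-k} = (p/q)^{k}/p^{n-1}$ is exactly $\Z[1/pq]$; these conjugates commute since in the colimit $b^{-k}a_{n}b^{k} = a_{n+k}^{q^{k}}$ lies in $\langle a_{n+k}\rangle$, so the whole normal closure collapses to the abelian group $\Z[1/pq]$. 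Thus $G\cong\Z[1/pq]\rtimes\Z$ is finitely generated (by $a_{1}$ and $b$), torsion-free, metabelian, and of Hirsch length $h(G) = 2$. Theorem~\ref{hd} (applicable because $G$ is abelian-by-polycyclic and torsion-free) gives $\hd_{\Z}(G) = 2$, and countability then forces $\cd_{\Z}(G)\le 3$.

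The remaining and most delicate task is the lower bound $\cd_{\Z}(G)\ge 3$. Bieri's universal-coefficient trick from \S\ref{sse:34} is not directly available: a Lyndon--Hochschild--Serre computation for $1\to\Z[1/pq]\to G\to\Z\to 1$ shows $H_{2}(G,\Z) = 0$, both the invariants and the coinvariants of multiplication-by-$q/p$ on $\Z[1/pq]$ vanishing (the latter because $q-p$ becomes a unit in $\Z[1/pq]/\langle q-p\rangle$ together with the cancellation coming from $1/p$). Instead I would argue $G$ is not of type $\FP_{2}$ over $\Z$ via the Bieri--Strebel criterion for split metabelian groups: $\FP_{2}$ requires the $\Sigma^{1}$-invariant of $G$ to contain both antipodal characters of $G^{\mathrm{ab}}$, which here amounts to $\Z[1/pq]$ being finitely generated both as a $\Z[q/p]$-module and as a $\Z[p/q]$-module; since $\Z[q/p] = \Z[1/p]$ and $\Z[p/q] = \Z[1/q]$ (as subrings of $\Q$), and $\Z[1/pq]$ is infinitely generated over either, one has $\Sigma^{1}(G) = \emptyset$ and $G$ is not $\FP_{2}$. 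Combined with $\cd_{\Z}(G) < \infty$ and the standard fact that $\FP_{\infty}$ together with finite $\cd$ forces $\cd = \hd$ (truncating an $\FP_{\infty}$ resolution at the top syzygy, which is finitely generated projective), the failure of $\FP_{2}$ rules out $\cd_{\Z}(G) = \hd_{\Z}(G) = 2$, so $\cd_{\Z}(G) = 3$. The hardest single step is invoking Bieri--Strebel; an equivalent route is to cite Gildenhuys's direct calculation \cite{gildenhuys} for $(p,q) = (2,3)$, whose method generalises to arbitrary coprime $p, q$ with $|p|, |q| > 1$, or to apply the Jensen $\lim^{1}$-exact sequence $0 \to \lim^{1} H^{2}(B(p,q), M) \to H^{3}(G, M) \to 0$ (using $\cd B(p,q) = 2$) with $M = \Z G$ and exhibit non-Mittag--Leffler behaviour of the inverse system.
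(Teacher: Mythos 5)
Your treatment of the surjectivity of $\phi$, the identification of the colimit with $\Z[1/pq]\rtimes\Z$ (hence its metabelianness, finite generation, and Hirsch length $2$), and the resulting facts $\hd_\Z(G)=2$ and $\cd_\Z(G)\le 3$ all run parallel to the intended argument; the paper verifies metabelianness by an exponent-sum trick rather than by computing the colimit explicitly, but your identification is correct and recovers exactly the group of \S\ref{HH} when $(p,q)=(2,3)$. One minor omission: asphericity of the presentation complex only gives $\cd_\Z B(p,q)\le 2$, and you never rule out $\cd_\Z B(p,q)=1$; you still need that $B(p,q)$ is not free (e.g.\ it is torsion-free, two-generated and one-ended, or: it is non-abelian while its abelianization is $\Z\oplus\Z/|q-p|$).

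The substantive gap is in the lower bound $\cd_\Z(G)\ge 3$. Your computation that $\Sigma^1(G)=\emptyset$, hence that $G$ is not $\FP_2$ (equivalently, not finitely presented), is fine and is morally the same input as the paper's observation that the maps in the directed system all have non-trivial kernel. But the deduction ``failure of $\FP_2$ rules out $\cd_\Z(G)=\hd_\Z(G)=2$'' is not supported by the fact you cite: ``$\FP_\infty$ plus finite $\cd$ forces $\cd=\hd$'' is the \emph{converse} of what you need, and the implication ``finitely generated with $\cd_\Z=\hd_\Z=2$ implies $\FP_2$'' is false for general groups --- the Bieri--Stallings kernel of $F_2\times F_2\to\Z$ is finitely generated, contains $\Z^2$, has $\cd_\Z=\hd_\Z=2$, and is not of type $\FP_2$. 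What rescues the argument is precisely Gildenhuys' theorem (\cite{gildenhuys}, Theorem 5): a finitely generated \emph{soluble} group of cohomological dimension at most $2$ over $\Z$ is finitely presentable. This is a genuine structural theorem about soluble groups, not a formal consequence of finiteness properties, and it is exactly the step the paper leans on. You mention Gildenhuys only parenthetically, and as a ``direct calculation for $(2,3)$ whose method generalises''; in fact his general theorem is the missing link in your main argument. Once it is invoked the proof closes: $G$ is finitely generated, soluble, and not finitely presented, so $\cd_\Z(G)\ge 3$, and countability gives equality.
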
 
 
 \begin{proof}
 First note that $B:=B(p,q)$ has cohomological dimension $2$ because it is
 an HNN extension of the infinite cyclic group $\<a\>$,
 indeed
 it has a finite 2-dimensional classifying space  obtained by attaching a
 cylinder to a circle, with one end of the cylinder wrapping around $p$
 times, and the other $q$ times. ($B$ cannot be of cohomological dimension $1$ because it can be shown to be torsion-free with one end. Every $2$-generator torsion-free group with more than one end is free.) 
 
 The assignments $a\mapsto a^{pq}$ and $b\mapsto b$ clearly define an endomorphism of $B$. This is surjective because $\langle a^{pq},b\rangle$ contains both $ba^{pq}b^{-1}=a^{p^{2}}$ and $b^{-1}a^{pq}b=a^{q^{2}}$, and therefore $a$. To see that the resulting colimit $G$ is metabelian, 
 observe that for any word $w$ in $a$ and $b$ with exponent sum $k$ in $b$,  there is a sufficiently large natural number $n$ such that 
 $\phi^{n}(w)$ 
 lies in the coset $\langle a\rangle b^{k}$. Let $u$ and $v$ be elements of the derived subgroup of $B$. Then $u$ and $v$ have exponent sum zero in $b$ and therefore $\phi^{n}(u)$ and $\phi^{n}(v)$ both belong to 
 $\langle a\rangle$ and so commute.
   
 Gildenhuys (\cite{gildenhuys}, Theorem 5) characterizes the soluble groups that have cohomological dimension at most two over $\Z$.
 In particular he shows that if such a group is finitely generated then it is finitely presentable. Our colimit $G$,
 on the other hand, is not finitely presentable, because the maps in the
 directed system defining it all have
 non-trivial kernel: the normal form theorem for HNN extensions assures us
 that
 $[a,b^{-1}ab]\in\ker \phi$ is non-trivial, for example. Thus the colimit
 must have dimension at least $3$. But 
 since homology commutes with directed colimits, the homological dimension
 of $G$ is $2$.
\end{proof}

Note that although the group $G$ constructed in the preceding proof is not finitely presented, the group $\hat G = G\rtimes_{\overline{\phi}}\Z$
is, where $\overline{\phi}:G\to G$ is the isomorphism induced by $\phi: B\to B$. Indeed $\hat G$ has a balanced presentation,
$$
\hat G = \langle a, b, t ; \, b^{-1}a^pb = a^q,\ t^{-1}at=a^{pq},\, t^{-1}bt=b\rangle.
$$
Thus we obtain (concise) finite presentations of metabelian groups that have homological dimension $3$ and cohomological dimension $4$.

As a variation on this theme, we shall prove that the group $G_1$ with presentation
\[
G_{1}:=\langle a,b,c,d;\ b^{-1}a^{2}b=a^{3}, c^{-1}a^{3}c=a^{5}, d^{-1}a^{5}d=a^{2}, bc=cb, cd=dc, db=bd\rangle
\]
is metabelian.
Similar reasoning to that above shows that the assignments $a\mapsto a^{30}$, $b\mapsto b$, $c\mapsto c$, $d\mapsto d$ determine a surjective endomorphism $\theta$ of $G_{1}$ such that the colimit 
$\colim(G_{1}\buildrel\theta\over\to
G_{1}\buildrel\theta\over\to
G_{1}\buildrel\theta\over\to\dots)$ is metabelian. But now the deep theory of Bieri--Strebel characterizing finitely presented metabelian groups may be brought into play. This metabelian colimit has $2$-tame Bieri--Neumann--Strebel invariant and is therefore finitely presented. It follows that the colimit process must stop in a finite number of steps and hence $G_{1}$ itself is metabelian and, strikingingly,  $\theta$ is an automorphism. Conceivably this example, although not central to the directions of the present paper, may have interest elsewhere.

\subsection{Homological Dimension One}\label{hdo}

Cohomological dimension one is completely understood by the theorem \cite{dunwoody1979} of Dunwoody: a group $G$ has cohomological dimension $\le1$ over $k$ if and only if it is the fundamental group of a graph of finite groups whose orders are units in $k$. The case where $k=\Z$ was proved earlier by Stallings (for finitely generated groups) and Swan (in general) and simply asserts that the groups are free. Homological dimension one is not understood. The following Lemma provides the only known source of such groups:
\begin{lemma}\label{fc} Let $n$ be a natural number.
If $G$ is the filtered colimit of a family of groups $H_\lambda$ with $\hd_kH_\lambda\le n$ then $\hd_k G\le n$.
\end{lemma}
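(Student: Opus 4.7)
The plan is to reduce the statement to the isomorphism already flagged in the discussion preceding Lemma \ref{hcolim}, namely that filtered colimits commute with group homology. Indeed, Lemma \ref{fc} is essentially a restatement of Lemma \ref{hcolim} itself; the purpose of the present formulation is to package the fact in a form convenient for the forthcoming discussion of homological dimension one.

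Concretely, I would proceed as follows. Fix an arbitrary $kG$-module $M$, where $G = \colim H_\lambda$; the goal is to show $H_{n+1}(G, M) = 0$. Restricting $M$ along each structure homomorphism $H_\lambda \to G$ yields a constant-valued compatible system of $kH_\lambda$-modules, whose colimit is $M$ itself. The isomorphism
\[
\colim H_{n+1}(H_\lambda, M) \;\cong\; H_{n+1}(G, M)
\]
then reduces the question to the vanishing of each $H_{n+1}(H_\lambda, M)$, which is guaranteed by the hypothesis $\hd_k(H_\lambda) \le n$. Since $M$ was arbitrary, $\hd_k(G) \le n$.

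The only step requiring comment is the commutation of homology with the filtered colimit, and the main obstacle here, if one can call it that, is simply invoking this standard fact cleanly. The neatest route is to take the bar resolution of $k$ over each $kH_\lambda$: as a functor in the group, its formation commutes with filtered colimits, and tensoring with $M$ over $kH_\lambda$ followed by passing to homology is a composition of functors each of which commutes with filtered colimits (the last because filtered colimits of abelian groups are exact). Nothing delicate occurs beyond this, and no finiteness or amenability hypothesis on the indexing system is required.
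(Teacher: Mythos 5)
Your argument is correct and is essentially identical to the paper's own proof, which simply invokes the isomorphism $H_m(G,M)\cong\colim H_m(H_\lambda,M)$ for all $m$ and concludes. The extra justification via the bar resolution is a reasonable way to make the standard commutation fact explicit, but it adds nothing beyond what the paper takes for granted.
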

\begin{proof}
Homology commutes with filtered colimits:
$$H_m(G,M)\iso\colim H_m(H_\lambda,M)$$ for all $m$, and the result follows.
\end{proof}
This leads naturally to Conjecture \ref{mainII}.
One direction of the conjecture is true and easily deduced from Lemma \ref{fc}: if $G$ is the filtered colimit of groups of cohomological dimension $\le1$ the Lemma shows that $G$ has homological dimension $\le1$. The other direction 
 is open even with $k=\Z$, in which case the conjecture takes a simplified form:

\begin{conjecture}\label{B}
Every group of homological dimension $\le1$ over $\Z$ is locally free.
\end{conjecture}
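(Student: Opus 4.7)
The plan is to reduce to the finitely generated case and then combine Stallings' ends theorem with a homological analysis of successive splittings. Since every group is the directed union of its finitely generated subgroups and $\hd_{\Z}$ cannot increase on passage to subgroups, it suffices to prove that a finitely generated $G$ with $\hd_{\Z}(G)\le 1$ is free. Such a $G$ is torsion-free, and since $G$ is countable the general inequality $\cd_{\Z}(G)\le\hd_{\Z}(G)+1$ (recorded in the introduction) yields $\cd_{\Z}(G)\le 2$. The target is to improve this to $\cd_{\Z}(G)\le 1$, at which point the Stallings--Swan theorem finishes the argument.

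The main argument I would attempt is to show that $G$ has infinitely many ends unless $G$ is trivial or $G\iso\Z$. Granting this, Stallings' theorem together with the absence of torsion produces a decomposition of $G$ as either a nontrivial free product $A*B$ or as an HNN extension with trivial associated subgroups. A Mayer--Vietoris computation then shows that each vertex group inherits the bound $\hd_{\Z}\le 1$, and these groups remain finitely generated (by a retraction in the free product case, and by a Grushko-style count in the HNN case). One would then iterate this decomposition and hope to recover $G$ as a free group acting on a tree with trivial vertex stabilizers.

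The main obstacle is twofold. First, there is no homological substitute known for Dunwoody's accessibility theorem: a finitely generated group with $\hd_{\Z}\le 1$ need not be finitely presented, so the iterated Stallings decompositions may fail to terminate, and the limiting Bass--Serre tree could have nontrivial vertex stabilizers. One would need to show, probably by an independent argument, that every nontrivial, one-ended, finitely generated, torsion-free group violates $\hd_{\Z}\le 1$. Second, ruling out such a one-ended subgroup $H$ appears to be genuinely hard: the data $H^{1}(H,\Z H)=0$ and $\cd_{\Z}(H)\le 2$ are compatible with $\hd_{\Z}(H)=1$ so far as any currently available invariant can detect, and constructing a module $M$ with $H_{2}(H,M)\ne 0$ from these hypotheses alone seems out of reach.

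A realistic intermediate target is to establish the conjecture under an $\FP_{2}$ hypothesis, where Dunwoody accessibility supplies the missing structural input, the Stallings iteration terminates, and one can then invoke Stallings--Swan on each vertex group; combined with the elementary amenable case already handled in the paper, this would give a strong partial result. The full conjecture remains open precisely because neither of the two obstacles above has been overcome in general, and my attack would most likely flounder at the step of excluding one-ended vertex groups in the absence of any additional finiteness hypothesis.
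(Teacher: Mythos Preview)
The statement you were given is a \emph{conjecture}, not a theorem: the paper explicitly says that this direction ``is open even with $k=\Z$'' and offers no proof of it. So there is no ``paper's own proof'' to compare your attempt against, and you were right to conclude that your approach flounders --- the problem is genuinely open.

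What the paper actually does with Conjecture~\ref{B} is quite different from your strategy. It shows that Conjecture~\ref{B} follows from the more general Conjecture~\ref{mainII} (that every group of homological dimension $\le 1$ over $k$ is a filtered colimit of groups of cohomological dimension $\le 1$), via a short Hopficity argument for free groups of finite rank. It then establishes Conjecture~\ref{mainII}, and hence Conjecture~\ref{B}, for elementary amenable groups (Proposition~\ref{34}). It also records that any counterexample to Conjecture~\ref{B} would violate the Atiyah conjecture.

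Your analysis of the obstacles is accurate and well aimed. The reduction to finitely generated torsion-free $G$ with $\cd_\Z(G)\le 2$ is correct, and the real difficulty is exactly where you locate it: ruling out a one-ended finitely generated torsion-free group with $\hd_\Z\le 1$. Your observation that an $\FP_2$ hypothesis would allow Dunwoody accessibility to close the argument is also correct, but note that the paper does not pursue this line at all --- its contribution is the filtered-colimit reformulation and the elementary amenable case, not a Stallings-style attack on the general problem.
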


In \cite{krophollerlinnelllueck} it is shown that any counterexample to this would also be a couterexample to the Atiyah conjecture which states that the von Neumann dimensions of Hilbert modules for torsion-free groups are integer valued whenever they are finite.

\begin{proof}[Proof that Conjecture \ref{mainII} implies Conjecture \ref{B}]
Let $G$ be a group of homological dimension 1 over $\Z$.
According to Stallings--Swan, groups of cohomological dimension $\le1$ are free, so Conjecture \ref{mainII} implies that $G$ is a filtered colimit $\displaystyle\colim G_{\lambda}$  
of free groups. Let $S$ be a finite subset of $G$. Choose $\lambda$ so that the image of $G_{\lambda}$  in $G$ contains $S$. Choose a finitely generated $H_{\lambda}\subseteq G_{\lambda}$ such that the image of $H_{\lambda}$ in $G$ contains $S$. Now for each $\mu\ge\lambda$ we have that the image $H_{\mu}$ of $H_{\lambda}$ in $G_{\mu}$ is free and either the connecting map is an isomorphism or the rank of $H_{\mu}$ is strictly less than that of $H_{\lambda}$. Since the ranks are finite, eventually we will reach a $\mu$ where the rank is minimum and the image of this $H_{\mu}$ in $G$ will be an isomorphic copy of $H_\mu$, thus providing a free group containing $S$.
\end{proof}

Note that the deduction of Conjecture \ref{B} from \ref{mainII}   makes use of the fact that free groups of finite rank are Hopfian in a very strong sense: namely that proper quotients are either non-free or have fewer generators. This constrasts with the situation over $\Q$ where there exist groups of cohomological dimension 1 with two generators and many quotients that also have cohomological dimension 1 and two generators: this is illustrated in the proof of Proposition \ref{34} below.  

Although one might be tempted to think that groups of homological dimension one are locally of cohomological dimension 1, this is not the case. The restricted wreath product $C_p\wr C_\infty$ has homological dimension 1 over any $k$ in which $p$ is invertible and it is finitely generated of cohomological dimension 2. However it is not a counterexample to Conjecture \ref{mainII}. Indeed, we can confirm Conjecture \ref{mainII} for elementary amenable groups.

\begin{proposition}\label{34}
Every elementary amenable group of homological dimension $\le1$ over $k$ is a filtered colimit of groups of cohomological dimension $\le1$ over $k$.
\end{proposition}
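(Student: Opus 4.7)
The plan is to reduce to the case that $G$ is finitely generated and then to realise $G$ as a filtered colimit of HNN extensions of finite groups. Since the class of groups that are filtered colimits of groups of $\cd_k\le 1$ over $k$ is itself closed under filtered colimits — a filtered colimit of filtered colimits is a filtered colimit — and $G$ is the filtered union of its finitely generated subgroups $G_\lambda$, each of which is elementary amenable with $\hd_k(G_\lambda)\le 1$, it suffices to treat the finitely generated case.

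Assume $G$ is finitely generated. Theorem \ref{hd0} gives $h(G)\le 2$, and I would first sharpen this to $h(G)\le 1$. By Wehrfritz's Proposition \ref{bafw95g} there is a finite-index subgroup $M\le G$ with characteristic subgroups $\tau\le N\le M$ having torsion, torsion-free nilpotent and free-abelian quotients, with $h(N/\tau)+h(M/N)=h(G)$. If $h(G)=2$, then in each of the three cases $(h(N/\tau),h(M/N))\in\{(0,2),(1,1),(2,0)\}$ the quotient $M/\tau$ is torsion-free polycyclic of Hirsch length $2$, hence isomorphic to $\Z^2$ or to the Klein bottle group. Theorem \ref{hd} then supplies $\hd_k(M/\tau)=2$. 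Since $\tau$ is locally finite and without $k$-torsion, $\hd_k(\tau)=0$, so the Lyndon--Hochschild--Serre spectral sequence for $\tau\mono M\epi M/\tau$ collapses on the edge $q=0$ to give $\hd_k(M)=\hd_k(M/\tau)=2$, contradicting $\hd_k(G)\le 1$. Thus $h(G)\le 1$. The case $h(G)=0$ is immediate ($G$ is finite of order invertible in $k$, so $\cd_k(G)=0$), and when $h(G)=1$ the quotient $G/\tau(G)$ is finitely generated, torsion-free, elementary amenable of Hirsch length one, which forces $G/\tau(G)\cong\Z$; freeness of $\Z$ splits the extension to give $G=T\rtimes\langle t\rangle$ with $T=\tau(G)$ locally finite and without $k$-torsion.

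To finish I would index by the finite subgroups $F\le T$: set $A_F=F\cap t^{-1}Ft$ and $B_F=tA_Ft^{-1}\le F$, and let $\Gamma(F)$ be the HNN extension of $F$ with stable letter $t$ identifying $A_F$ with $B_F$ via the isomorphism $a\mapsto tat^{-1}$ of $G$. Each $\Gamma(F)$ is the fundamental group of a graph of finite groups whose orders are invertible in $k$, so $\cd_k(\Gamma(F))\le 1$ by Dunwoody \cite{dunwoody1979}. The inclusions $F\le F'$ induce compatible homomorphisms $\Gamma(F)\to\Gamma(F')$ fixing $t$, yielding a filtered system whose colimit is presented by $T\cup\{t\}$ subject to the multiplication relations of $T$ and every $G$-conjugation relation $txt^{-1}=y$ for $x,y\in T$ — precisely the defining presentation of $T\rtimes\langle t\rangle$. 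The main obstacle is the structural upgrade from $h(G)\le 2$ to the sharp $h(G)\le 1$: since Conjecture \ref{mainI} is not yet available, this must be established at this low dimension directly, via the combination of Wehrfritz's normal form and the spectral-sequence calculation sketched above.
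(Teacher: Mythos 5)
Your reduction to the finitely generated case, your argument that $h(G)\le 1$, and your HNN construction in the split case are all sound, but there is a genuine gap at the structural step: you assert that $G/\tau(G)$ is torsion-free and hence infinite cyclic. This is false. The largest normal locally finite subgroup does not absorb all torsion: the quotient $G/T$ is a finitely generated elementary amenable group of Hirsch length $1$ with trivial locally finite radical, and such a group is either infinite cyclic \emph{or infinite dihedral} (already $G=D_\infty$ witnesses this, with $\tau(G)=1$). In the dihedral case there is no splitting $G=T\rtimes\langle t\rangle$ and no single stable letter to build HNN extensions with; instead $G=H*_TK$ where $H,K$ are the two subgroups containing $T$ with index $2$, both locally finite, and one must exhaust $G$ by amalgamated free products of finite subgroups along chains in $H$ and $K$. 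This is exactly the second case in the paper's proof, and your argument as written does not cover it.

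Two further remarks. First, in your exclusion of $h(G)=2$ you claim $M/\tau$ is torsion-free polycyclic, hence $\Z^2$ or the Klein bottle group; this is also wrong (it could be, say, $\Z[1/2]\rtimes\Z$, since $N/\tau$ need not be finitely generated), but the step survives because $M/\tau$ is in every case a torsion-free abelian-by-polycyclic group of Hirsch length $2$ without $k$-torsion, so Theorem \ref{hd} still yields $\hd_k(M/\tau)=2$ and your inflation/spectral-sequence argument then gives the contradiction. This portion is actually a useful supplement, as the published proof states $h(G)=1$ without justification. Second, in the cyclic case your construction is essentially the paper's (the paper does not even need the splitting: it takes any coset representative $g$ of a generator of $G/T$ and forms $H_n=B_n*_{F_n,g}$ with $B_n=\langle F_n, g^{-1}F_ng\rangle$), so once the dihedral case is added your proof would be complete.
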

\begin{proof}
Suppose that $G$ is a finitely generated elementary amenable group with $\hd_k(G)=1$. Let $T$ be the largest normal locally finite subgroup of $G$. Then $G$ has Hirsch length $1$ and $G/T$ is either infinite cyclic or infinite dihedral. We consider the two cases.
\begin{enumerate}
\item $G/T$ is cyclic: Let $g$ be a generator of $G$ modulo $T$ and let $F_n$ ($n\ge0$) be a chain of finite subgroups with union $T$. For each $n$ let $B_n$ be the (finite) subgroup generated by $F_n$ and $g^{-1}F_ng$. Then we can build the HNN-extensions $H_n:=B_n*_{F_n,g}$. There are natural maps $H_n\to H_{n+1}$ induced by the inclusions $F_n\to F_{n+1}$ and $G$ is the colimit of the $H_n$. Notice that each $H_n$ is the fundamental group of a graph of finite groups and so has cohomological dimension one. 
\item $G/T$ is dihedral: This time there are two subgroups $H$ and $K$ which contain $T$ as a subgroup of index $2$ and $G$ is the free product with amalgamation $H*_TK$. Both $H$ and $K$ are locally finite and so we can view $G$ as the colimit of a sequence of free products with amalgamation of finite groups by choosing chains of finite subgroups in $H$ and $K$ in the same spirit as the first case.
\end{enumerate}
In general, $G$ may be viewed as the filtered colimit of its finitely generated subgroups and combining this with the analysis above gives the desired conclusion.
\end{proof}

Two natural questions arise in this context and both have negative answers:

\begin{question}
If $\ell$ is a natural number, is it true that every group of homological dimension $\ell$ is a filtered colimit of groups of cohomological dimension at most $\ell$? In other words, does Conjecture \ref{mainII} remain plausible if the bound $1$ is replaced by a higher bound $\ell$?
\end{question}
\begin{proof}[Discussion]
No. If $\ell=3$ we have the following decisive counterexample with coefficient ring $\Z$. Let $D$ be the matrix group
 \[
 \left\{
 \begin{pmatrix} 2^{r}3^{s}&0\\q&1\end{pmatrix};\ r,s\in\Z,\ q\in\Z[\tfrac16]
 \right\}.
 \] 
 Let $H$ be the subgroup generated by $\begin{pmatrix} \frac23&0\\0&1\end{pmatrix}$ and
 $\begin{pmatrix} 1&0\\1&1\end{pmatrix}$ and let $G:=D*_{H,t}$ be the HNN-extension in which the stable letter centralizes $H$. Thus $G$ has the finite presentation
 $$\langle a,b,c,t;\ b^{-1}ab=a^{2},\ c^{-1}ac=a^{3},\  bc=cb,\ 
 tbc^{-1}=bc^{-1}t,\ ta=at\rangle.$$

 Moreover, $H$ is the matrix group of (\ref{HH}); 
 we saw that it has homological dimension $2$ and cohomological dimension $3$. 
Similarly, noting that $D\cong \Z[1/6]\rtimes\Z^2$, we see that $D$ has homological dimension $3$ and cohomological dimension at most $4$. 
It follows by consideration of the Mayer-Vietoris sequences associated to the HNN description of $G$ that $G$ has homological
dimension $3$ and cohomological dimension at most $4$. Finally, since
  $G$ contains the direct product $H\times\langle t\rangle$, which has cohomological dimension $4$,
we conclude that  $G$ has cohomological dimension at most $4$. The conclusion: $G$ is a finitely presented group of cohomological dimension $4$ and homological dimension $3$. Finite presentation prevents $G$ from being expressible as a filtered colimit of groups that do not already contain subgroups isomorphic to $G$ itself and hence any attempt to express $G$ as a filtered colimit involves groups of cohomological dimension $4$.
 
 This leaves a question about the case $\ell=2$. We do not know of an example over $\Z$ to rule out the possibility that every group of homological dimension $2$ over $\Z$ is a filtered colimit of groups of cohomological dimension $2$ over $\Z$. However there are counterexamples if one works over $\Q$. For example, consider the function field $\F_{p}(x)$ in one variable over the finite prime field $\F_{p}$ and consider the subgroup of $GL_{2}(\F_{p})$ generated by the matrices
\[
\begin{pmatrix}
x&0\\0&1 
\end{pmatrix},\ 
\begin{pmatrix}
x+1&0\\0&1 
\end{pmatrix},\ 
\begin{pmatrix}
1&0\\1&1 
\end{pmatrix}.
\]
Again this group is known to be finitely presented and can be expressed as an ascending HNN extension  
with base the lamplighter group  $C_{p}\wr C_{\infty}$  generated by the first and third of the matrices above. 
\end{proof}

\begin{question}
Returning to the case $\ell=1$, is it conceivable that every elementary amenable group of homological dimension one is a filtered colimit of elementary amenable groups of cohomological dimension one?
\end{question}
\begin{proof}[Discussion]
No. The lamplighter wreath product $W=C_{p}\wr C_{\infty}$ is already a counterexample to this proposal. Dunwoody's classification shows the elementary amenable groups of cohomological dimension one are either locally finite or virtually cyclic, and such a group cannot map onto $W$.
\end{proof}

\section{Cohomological Dimension}

The inequality $\hd_{k}\le\cd_{k}$ holds universally and so we only need to consider groups with finite homological dimension in this section. For elementary amenable groups there is a natural conjecture for cohomological dimension.

\begin{conjecture}\label{mainIII}
Let $k$ be a non-zero commutative ring and let $G$ be an elementary amenable group with no $k$-torsion. Then $\cd_{k}(G)$ is finite if and only if $G$ has cardinality less than $\aleph_{\omega}$. Moreover,
\begin{enumerate}
\item
if $G$ is constructible then $\cd_{k}(G)=h(G)$;
\item
if $G$ is countable but not constructible then $\cd_{k}(G)=h(G)+1$; and
\item
if $G$ is uncountable of cardinality $\aleph_{n}$, ($0<n<\omega$), then $\cd_{k}(G)=h(G)+n+1$.
\end{enumerate}
\end{conjecture}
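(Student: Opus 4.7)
The plan is to deduce the upper bounds from the filtered-colimit spectral sequence together with Lemma \ref{ccolim}, and to prove the matching lower bounds by exhibiting non-vanishing of the appropriate derived limit. Throughout, I would use that Conjecture \ref{hdconj}---known in many cases by Theorem \ref{hd}---gives $\hd_k(G)=h(G)$, so the whole question reduces to controlling the gap $\cd_k(G)-\hd_k(G)$. Case (i) is then essentially immediate from Proposition \ref{constr} and Corollary \ref{constrcor}: a torsion-free constructible soluble group is an inverse duality group over $\Z$ whose dualizing module is free abelian, forcing $\cd_k=\cd_\Z=h(G)$ for every non-zero $k$, and the virtually soluble constructible case follows by the standard finite-index argument.

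For case (ii) the upper bound $\cd_k(G)\le h(G)+1$ is immediate from $\cd_k\le\hd_k+1$ for countable groups together with Conjecture \ref{hdconj}. The lower bound is where the work lies. The strategy is to write a non-constructible countable $G$ as a strictly ascending colimit of polycyclic-by-finite subgroups $G_\lambda$, which exists because $G$ is locally minimax and minimax groups arise as colimits of polycyclic subgroups. Each $G_\lambda$ is constructible with $\cd_k(G_\lambda)=h(G_\lambda)\le h(G)$, and the countable filtered-colimit spectral sequence degenerates to
\[0\to\lim{}^{1} H^{h(G)}(G_\lambda,M)\to H^{h(G)+1}(G,M)\to\lim H^{h(G)+1}(G_\lambda,M)\to 0.\]
I would choose $M$ so that the inverse system $H^{h(G)}(G_\lambda,M)$, which by inverse duality is computed from the dualizing modules of the $G_\lambda$, fails to be Mittag--Leffler precisely because the chain is genuinely strict: non-constructibility of $G$ is exactly what prevents the approximating system from stabilising, forcing $\lim{}^{1}\ne 0$ and hence $\cd_k(G)\ge h(G)+1$.

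For case (iii), express $G$ of cardinality $\aleph_n$ as a filtered colimit of polycyclic-by-finite subgroups indexed by a diagram of cardinality $\aleph_n$; Lemma \ref{ccolim} gives the upper bound $\cd_k(G)\le h(G)+n+1$. The matching lower bound uses the same spectral sequence $E_2^{p,q}=\lim{}^{p} H^q(G_\lambda,M)$, this time arranging the indexing so that $\lim{}^{n+1}$ of the top-degree cohomology system $H^{h(G)}(G_\lambda,M)$ is non-zero. Jensen's calculation of derived limits on well-ordered towers of length $\omega_n$ is the key input for non-vanishing, and the task is to produce a tower of constructible subgroups inside $G$ whose dualizing modules fit together non-trivially in this derived sense---finiteness of $h(G)$ is what concentrates the contribution in a single degree, landing it precisely at $h(G)+n+1$. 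Finally, when $|G|\ge\aleph_\omega$, Wehrfritz's structure theorem (Proposition \ref{bafw95g}) shows that the torsion radical already has cardinality at least $\aleph_n$ for every finite $n$, so monotonicity of $\cd_k$ under subgroups combined with case (iii) gives $\cd_k(G)\ge h(G)+n+1$ for all $n$, forcing $\cd_k(G)=\infty$.

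The principal obstacle is the lower bound in case (iii): simultaneously constructing a module $M$ and a tower $(G_\lambda)$ that realise non-vanishing of $\lim{}^{n+1}$ requires precise control over how the constructible approximating subgroups sit inside $G$, together with a substantial piece of derived-limit machinery beyond what is needed in the countable case. This is presumably why the authors indicate that, beyond countable groups, they obtain only a significant reduction rather than a complete proof of the uncountable part of the conjecture.
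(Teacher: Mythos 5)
The statement you are proving is Conjecture \ref{mainIII}; the paper does not prove it and explicitly describes it as ``stubbornly difficult''. What the paper actually establishes is: case (i) (via Proposition \ref{constr} and Corollary \ref{constrcor}), Theorem \ref{X0case} (case (ii) for countable nilpotent-by-polycyclic-by-finite groups with $k=\Q$ only), and Theorem \ref{t3} (case (iii) in the same restricted setting, and only \emph{conditionally} on Conjecture \ref{c:want}). Your sketch also silently assumes Conjecture \ref{hdconj} throughout, which is itself open beyond the abelian-by-polycyclic case. So a correct assessment of your proposal is really an assessment of whether it closes the gaps the authors could not, and it does not.

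The decisive flaw is in your lower bound for case (ii). You propose to write a non-constructible countable $G$ as a \emph{strictly} ascending union of constructible (polycyclic-by-finite) subgroups $G_\lambda$ and to show $\lim^1 H^{h(G)}(G_\lambda,M)\neq 0$ because ``non-constructibility prevents the system from stabilising''. But non-constructible groups of finite Hirsch length can be finitely generated --- the group $H\cong\Z[1/6]\rtimes\Z$ of \S\ref{HH} has $h=2$ and $\cd_\Z=3$ --- and a finitely generated group admits no strictly ascending exhaustion by proper subgroups at all, so the tower you need does not exist; moreover finitely generated subgroups of such $G$ need not be polycyclic-by-finite (the lamplighter $C_p\wr C_\infty$ already shows this) nor constructible. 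Even where a tower exists, the assertion that it fails to be Mittag--Leffler is exactly the content of the theorem and you give no argument for it. The paper's actual lower-bound mechanism is entirely different: it fixes the extension $T\mono G\epi Q$ with $T$ locally finite abelian, uses Roseblade's Theorems C and E to produce split finite quotients $T/T_n$, builds $M=\bigoplus M_n$ from augmentation ideals of $\Q[T/T_n]$, and chases an explicit class through $H^{d+1}(G,M)\cong H_0\bigl(Q,\bigl(\prod M_n/\bigoplus M_n\bigr)^T\bigr)$ using Poincar\'e duality for $Q$. Your case (iii) has the same defect amplified: ``arranging the indexing so that $\lim^{n+1}$ is non-zero'' is a restatement of the problem, not a proof, and the paper can only carry out the analogous induction (Holt's method, Lemmas \ref{Holt's condition star}--\ref{l:Holt*}) by assuming the unproved Conjecture \ref{c:want} as its base case. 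Finally, the claim that $|G|\ge\aleph_\omega$ forces $\cd_k(G)=\infty$ reduces, via Wehrfritz, to the locally finite case, where the paper records only a \emph{consistency} result of Kropholler--Thomas, not a theorem of ZFC.
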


Roughly speaking Conjecture \ref{mainIII} amounts to saying that the cohomological dimension is as  
 large as it could possibly be given the constraints laid down by basic inequalities.
In case $h(G)=0$ (meaning that $G$ is locally finite) the statement here was conjectured by Holt \cite{holt1981} for finite prime fields $k$ and the consistency of this statement with Zermelo--Fraenkel set theory was established by Kropholler and Thomas \cite{krophollerthomas}. In case $G$ is countable the result has been established for $k=\Q$ when there is a bound on the orders of the finite subgroups of $G$. The general case of the conjecture involves a melding of these two cases and remains stubbornly difficult to prove. We shall make some reductions and examine one special case in greater detail.

\subsection{Reducing to the minimal counterexamples} 

Supposing the conjecture to be false, let us see how it might fail. 
If $G$ is a counterexample then it is natural to look at sections of $G$ and see whether these are also counterexamples. By a \emph{section}, we mean any group of the form $S/T$ where $T$ is normal in $S$ and $S$ is a subgroup of $G$. The homological dimension and Hirsch length of any counterexample are finite while the cohomological dimension is less than predicted. Consider first the case of countable groups. We have the following reduction in this case.

\begin{lemma}
If $G$ is a countable counterexample to Conjecture \ref{mainIII} then every finitely generated subgroup $H$ of $G$ with $h(H)=h(G)$ is either constructible or is a counterexample to one of the Conjectures \ref{hdconj}, \ref{mainIII}.
\end{lemma}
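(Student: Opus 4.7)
I would prove this by contradiction, unpacking what ``counterexample'' means and then exploiting the subgroup inequality $\cd_k(H)\le\cd_k(G)$.

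\textbf{Setting up.} Suppose for contradiction that $G$ is a countable counterexample to Conjecture \ref{mainIII} and that some finitely generated subgroup $H\le G$ with $h(H)=h(G)$ is both non-constructible and satisfies Conjectures \ref{hdconj} and \ref{mainIII}. As the text preceding the lemma observes, a counterexample has finite Hirsch length and finite homological dimension (otherwise one of the basic inequalities already fails, and there is nothing to contradict in Conjecture \ref{mainIII}). Let $h=h(G)$. Because $G$ is elementary amenable without $k$-torsion (the hypothesis of \ref{mainIII}), the standard chain of inequalities $\cd_k(G)\le\hd_k(G)+1\le h(G)+1$ applies --- the first because $G$ is countable, the second by the well-known bound $\hd_k\le h$ on elementary amenable groups with no $k$-torsion.

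\textbf{Extracting information from ``counterexample''.} I split on the two clauses of Conjecture \ref{mainIII}. If $G$ were constructible, Corollary \ref{constrcor} together with Proposition \ref{constr} would already give $\cd_k(G)=h(G)$, so $G$ would satisfy clause (i) and fail to be a counterexample. Thus $G$ is non-constructible, and the prediction is $\cd_k(G)=h(G)+1$. Since $G$ is a counterexample, this prediction fails; combined with the upper bound $\cd_k(G)\le h(G)+1$ from the previous paragraph, the only way it can fail is
\[\cd_k(G)\le h(G) = h.\]

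\textbf{Deriving the contradiction.} Now invoke the hypotheses on $H$. Because $H$ is a subgroup of the $k$-torsion-free group $G$ it is itself $k$-torsion-free, and being finitely generated it is countable. If $H$ satisfies Conjecture \ref{hdconj}, then $\hd_k(H)=h(H)=h$. Being finitely generated non-constructible, Conjecture \ref{mainIII}(ii) applied to $H$ yields
\[\cd_k(H)=h(H)+1=h+1.\]
But subgroups have at most the cohomological dimension of the ambient group (projective $kG$-resolutions restrict to projective $kH$-resolutions), so $\cd_k(H)\le\cd_k(G)\le h$, contradicting $\cd_k(H)=h+1$. Hence $H$ must fail to be constructible-and-conjecture-compliant; that is, $H$ is constructible or is a counterexample to \ref{hdconj} or to \ref{mainIII}.

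\textbf{Likely obstacle.} There is no real obstacle here once one parses ``counterexample'' correctly: the only delicate point is keeping the direction of the inequality straight --- failure of \ref{mainIII}(ii) means $\cd_k(G)$ is \emph{below} $h(G)+1$, not above, and this is precisely what clashes with the estimate $\cd_k(H)=h(G)+1\le\cd_k(G)$ supplied by the hypothetical good behaviour of $H$.
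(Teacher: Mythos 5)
Your argument is correct and follows essentially the same route as the paper: both compare the prediction $\cd_k(H)=h(H)+1$, which Conjectures \ref{hdconj} and \ref{mainIII} would force on a non-constructible finitely generated $H$ of full Hirsch length, against the chain $\cd_k(H)\le\cd_k(G)\le\hd_k(G)+1\le h(G)+1$ and the failure of clause (ii) for the non-constructible countable counterexample $G$. The only cosmetic difference is that the paper first pins down $\hd_k(G)=\cd_k(G)=h(G)$ exactly before restricting to $H$, whereas you work directly with the upper bound $\cd_k(G)\le h(G)$.
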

\begin{proof}
Let $G$ be such a counterexample. 
Suppose that no finitely generated subgroups $H$ with $h(H)=h(G)$ are counterexamples to Conjecture \ref{hdconj}. Then $\hd_{k}(H)=h(H)$ for all such subgroups.
We deduce that $G$ is not constructible but nevertheless $\hd_{k}(G)=\cd_{k}(G)$.
Suppose that $H$ is a finitely generated subgroup of $G$ with $h(H)=h(G)$. Suppose that $\hd_{k}(H)=h(H)$. Then $\hd_{k}(H)=\hd_{k}(G)$ from which it follows that $\hd_{k}(H)=\cd_{k}(H)$ and therefore, assuming Conjecture \ref{mainIII} is valid for $H$, we deduce that $H$ is constructible. \end{proof}

In view of this it is natural to consider finitely generated groups.

\begin{proposition}
If $G$ is a finitely generated counterexample to Conjecture \ref{mainIII} and $T$ is the largest 
locally finite normal subgroup of $G$ then either $G/T$ is also a counterexample or $T$ is infinite and 
$G/T$ is constructible.
\end{proposition}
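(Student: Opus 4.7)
My plan is to compare $\cd_{k}(G)$ with $\cd_{k}(G/T)$ by means of the Lyndon--Hochschild--Serre spectral sequence of $T \mono G \epi G/T$, exploiting that $T$ is locally finite with all finite subgroup orders invertible in $k$ (since $G$ has no $k$-torsion). Some preliminaries first. Because $G$ is finitely generated it is countable, so $\cd_{k}(G) \le \hd_{k}(G)+1 \le h(G)+1$. By Proposition \ref{constr}, constructible groups satisfy Conjecture \ref{mainIII}, so $G$ is not constructible, meaning the conjecture's predicted value $h(G)+1$ differs from the true $\cd_{k}(G)$; combined with the upper bound this forces $\cd_{k}(G)\le h(G)$. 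Moreover $h(G/T)=h(G)$, and $G/T$ has no $k$-torsion: any finite subgroup $F \le G/T$ has a locally finite preimage $H \le G$ in which finitely many coset representatives of $T$ generate a finite subgroup $K \le H$, and $F$ is a quotient of $K$, so $|F|$ divides $|K|$, which is invertible in $k$.

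If $T$ is finite, then $|T|$ is invertible in $k$, so $kT$ is semisimple and $H^{q}(T,M)=0$ for every $kG$-module $M$ and every $q>0$. The spectral sequence collapses to $H^{p}(G,M)\iso H^{p}(G/T,M^{T})$, giving $\cd_{k}(G)=\cd_{k}(G/T)$. Constructibility passes in both directions across a finite normal subgroup: since an elementary amenable constructible group is virtually of type $\FFF$, $G$ has a torsion-free finite-index subgroup that intersects $T$ trivially and injects into $G/T$, yielding an analogous subgroup there, while the reverse implication is closure of constructible groups under finite extensions. Hence $G$ is constructible iff $G/T$ is, so $G/T$ is a counterexample exactly when $G$ is.

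If $T$ is infinite, then $G$ contains an infinite torsion subgroup, so it is not virtually torsion-free and hence not constructible. The key claim is that for any $k[G/T]$-module $M$ inflated to a $kG$-module (so with trivial $T$-action), $H^{q}(T,M)=0$ for all $q>0$. Writing $T$ as the union of an ascending chain $T_{n}$ of finite subgroups, semisimplicity gives $H^{q}(T_{n},M)=0$ for $q>0$, and the Milnor-type short exact sequence
\[0\to\lim\nolimits^{1} H^{q-1}(T_{n},M)\to H^{q}(T,M)\to\lim H^{q}(T_{n},M)\to 0\]
then forces $H^{q}(T,M)=0$: for $q>1$ both flanking terms vanish, while for $q=1$ the $\lim^{1}$ is of the constant system $M$ with identity transitions, which is zero. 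The spectral sequence consequently collapses to $H^{p}(G,M)\iso H^{p}(G/T,M)$, and taking the supremum over $M$ gives $\cd_{k}(G)\ge\cd_{k}(G/T)$.

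Chaining the inequalities yields $\cd_{k}(G/T)\le\cd_{k}(G)\le h(G)=h(G/T)$. If $G/T$ is not a counterexample, then Conjecture \ref{mainIII} applies to $G/T$: either it is constructible with $\cd_{k}(G/T)=h(G/T)$, or it is not constructible with $\cd_{k}(G/T)=h(G/T)+1$. The latter contradicts the chain, so $G/T$ must be constructible, completing the argument. The main technical hurdle is the vanishing of $H^{q}(T,\cdot)$ on inflated modules in the infinite case; this is what upgrades the trivial homological statement $\hd_{k}(T)=0$ into the cohomological comparison $\cd_{k}(G)\ge\cd_{k}(G/T)$ that pins down the structure of $G/T$.
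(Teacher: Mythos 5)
Your argument is correct and follows essentially the same route as the paper's: the chain $\cd_{k}(G/T)\le\cd_{k}(G)\le h(G)=h(G/T)$ forces $G/T$ to be either constructible or again a counterexample, and if $T$ were finite then constructibility of $G/T$ would make $G$ constructible, contradicting that counterexamples are not constructible. The only real difference is one of detail: the paper first passes to a finite-index subgroup so that $G/T$ is torsion-free and then simply asserts $\cd_{k}(G/T)\le\cd_{k}(G)$, whereas you justify that inequality by collapsing the Lyndon--Hochschild--Serre spectral sequence on inflated modules using the vanishing of $H^{q}(T,-)$ for $q>0$ on trivial $T$-modules (via the $\lim^{1}$ sequence), which is a sound and welcome elaboration of the step the paper leaves implicit.
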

\begin{proof}
Replacing $G$ by a subgroup of finite index we may assume that $G/T$ is torsion-free. We then have $\cd_{k}(G/T)\le\cd_{k}(G)=h(G)=h(G/T)$ and it follows that $G/T$ is either constructible or is again a counterexample. If $G/T$ is constructible and $T$ is finite then $G$ is also constructible and so is no counterexample. Hence, $T$ must be infinite in case $G/T$ is constructible.
\end{proof}

In order to make progress we need to restrict attention to cases where Conjecture \ref{hdconj} holds. Therefore we shall confine attention to the case $k=\Q$ for the remainder of this paper.

\subsection{Nilpotent-by-polycyclic-by-finite groups over $\Q$}

In the remainder of the article we attempt to prove Conjecture \ref{mainIII} for
nilpotent-by-polycyclic-by-finite groups $G$ with cardinality $|G|<\aleph_\omega$ and with $k=\Q$.
We succeed in the case of countable groups. In the case of groups of greater cardinality
we reduce the problem to a conjecture about the first cohomology of countable, locally
finite abelian groups.

\subsection{The countable case}

We shall need the following elementary argument from commutative algebra.

%

\begin{lemma}\label{somecommutativeringtheory}
Let $S$ be a commutative ring and let $U$ and $V$ be $S$-modules both of which admit composition series. 
Let $\mathcal I$ be the set of maximal ideals $I$ of $S$ such that $S/I$ is isomorphic to a composition factor of $U$ and let $\mathcal J$ be the set of maximal ideals $J$ of $S$ such that $S/J$ is isomorphic to a composition factor of $V$. If $\mathcal I\cap\mathcal J=\emptyset$ then $\ext_S^n(U,V)=0$ for all $n\ge0$.
\end{lemma}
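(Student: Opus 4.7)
The plan is to argue by induction on $\ell(U)+\ell(V)$, the sum of the composition lengths. For the base case both modules are simple: $U=S/I$ and $V=S/J$ for some $I\in\mathcal I$ and $J\in\mathcal J$, and these are distinct by hypothesis.

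To dispatch the base case I would exploit the fact that, because $S$ is commutative, the two natural $S$-actions on $\ext_S^n(S/I,S/J)$---one induced from the action on $S/I$, the other from $S/J$---coincide. Choose any $x\in I\setminus J$. Then multiplication by $x$ on $S/I$ is the zero map, so the induced endomorphism of $\ext_S^n(S/I,S/J)$ vanishes; on the other hand $x$ projects to a nonzero element of the field $S/J$, so multiplication by $x$ on $S/J$ is an isomorphism and the induced endomorphism of Ext is an automorphism. A self-map that is simultaneously zero and invertible can only live on the zero module, giving $\ext_S^n(S/I,S/J)=0$ for every $n\ge 0$.

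For the inductive step I would assume $\ell(U)\ge 2$ (the case $\ell(V)\ge 2$ is symmetric), pick a proper nonzero submodule $U'\subset U$, and set $U''=U/U'$, so that both have strictly smaller length. Every composition factor of $U'$ or $U''$ is a composition factor of $U$, so the associated sets of maximal ideals sit inside $\mathcal I$ and remain disjoint from $\mathcal J$. The inductive hypothesis gives $\ext_S^n(U',V)=\ext_S^n(U'',V)=0$ for all $n$, and the long exact sequence of Ext in the first variable then forces $\ext_S^n(U,V)=0$ for all $n$; the symmetric argument in the second variable handles the case $\ell(V)\ge 2$.

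The only genuinely delicate point is the compatibility of the two $S$-actions on $\ext_S^n(A,B)$ for a commutative ring $S$, which is the crux of the base case. This is a standard consequence of the bifunctoriality of $\ext$ together with the fact that scalar multiplication by an element of $S$ defines a natural transformation of the identity functor on $S$-modules, so in the end there is no real obstacle to the argument.
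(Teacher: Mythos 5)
Your proposal is correct and follows essentially the same route as the paper: reduce to simple modules via the long exact sequences of $\ext$ in each variable, then use the agreement of the two $S$-module structures on $\ext_S^n(S/I,S/J)$ to force vanishing. The only cosmetic difference is that you kill the Ext group with a single element $x\in I\setminus J$ acting as both zero and an automorphism, whereas the paper notes it is annihilated by both $I$ and $J$ and hence by $I+J=S$.
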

\begin{proof}
The long exact sequence for $\ext$ may be used to reduce to the case when both $U$ and $V$ are irreducible. The hypothesis  $\mathcal I\cap\mathcal J=\emptyset$ is then reduced to the assertion that the annihilators of $U$ and $V$ are distinct maximal ideals $I$ and $J$ of $S$. Since $S$ is commutative, all the $\ext$ groups $\ext_S^n(U,V)$ inherit $S$-module structures and are annihilated by both $I$ and $J$ and therefore by $S=I+J$.
\end{proof}

\begin{theorem}\label{X0case}
Let $G$ be a countable group that is nilpotent-by-polycyclic-by-finite. 
Then $G$ has finite rational cohomological dimension if and only if $\hdq(G)<\infty$, in which
case
\begin{enumerate}
\item $\cdq(G)=\hdq(G)$ if $G$ is constructible, and 
\item $\cdq(G)=\hdq(G)+1$ if $G$ is not constructible.
\end{enumerate}
\end{theorem}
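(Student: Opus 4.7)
The plan is to establish the upper bound $\cdq(G)\le\hdq(G)+1$ for countable $G$ and then to prove a matching lower bound $\cdq(G)\ge\hdq(G)+1$ whenever $G$ is not constructible. Part~(i), the constructible case, follows directly from Corollary~\ref{constrcor}, which gives $\cdq(G)=h(G)=\hdq(G)$. For the rest it is convenient to know the identity $\hdq(G)=h(G)$ in general, which can be obtained by extending Theorem~\ref{hd} from the abelian-by-polycyclic-by-finite setting to the nilpotent-by-polycyclic-by-finite setting via an induction along the lower central series of a torsion-free nilpotent normal subgroup of a finite-index subgroup (produced by Proposition~\ref{bafw95g}), applying Theorem~\ref{feld}(ii) at each step.

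For part~(ii) with $G$ not constructible, set $n=h(G)=\hdq(G)$. The plan is to realize $G$ as a countable filtered colimit $G=\colim G_i$ of constructible subgroups with $h(G_i)=n$ for every $i$, and then exploit the resulting $\lim^{1}$ short exact sequence. Non-constructibility forces the nilpotent radical $N$ of $G$ not to be finitely generated, but it is countable of finite Hirsch length; an inductive construction along the lower central series of $N$, using that every countable $\Z S$-module is the directed union of its finitely generated $\Z S$-submodules (where $S$ is a polycyclic subgroup of $G$ such that $NS$ has finite index, supplied by a Lennox--Robinson-type splitting as in the proof of Theorem~\ref{hd}), yields an ascending family $N_i<N$ of $S$-invariant finitely generated nilpotent subgroups of full Hirsch length with $\bigcup_i N_i=N$. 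Each $G_i:=N_iS$ is then polycyclic-by-finite, hence constructible, of Hirsch length~$n$.

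Since $\cdq(G_i)=n$ for every $i$ by Corollary~\ref{constrcor}, the countably indexed colimit spectral sequence collapses to the short exact sequence
\[0\to \lim\nolimits^{1}H^{n}(G_i,M)\to H^{n+1}(G,M)\to \lim H^{n+1}(G_i,M)\to 0\]
with vanishing right-hand term, so the problem reduces to exhibiting a $\Q G$-module $M$ with $\lim\nolimits^{1}H^{n}(G_i,M)\ne 0$. Each $G_i$ is an inverse duality group over $\Q$ by Proposition~\ref{constr}, so $H^{n}(G_i,\Q G_i)$ is a free $\Q$-module on which $G_i$ acts through the dualizing character; the natural test module $M$ is one built by inducing or coinducing from this compatible tower of dualizing characters. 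Non-constructibility of $G$ makes the indices $[N_j:N_i]$ unbounded and so drives the inverse system of top cohomologies off the Mittag--Leffler condition.

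The main obstacle is converting the failure of Mittag--Leffler into a genuine non-vanishing of $\lim^{1}$, as opposed to a cancellation. This is precisely where Lemma~\ref{somecommutativeringtheory} is designed to be used: the dualizing modules of the $G_i$ decompose into isotypic components under the commutative ring $\Q[S/[S,S]]$, and Lemma~\ref{somecommutativeringtheory} guarantees that choosing $M$ aligned with a single maximal ideal of that ring eliminates all off-diagonal composition-factor contributions to the $\lim^{1}$ computation. On the isolated isotypic component the tower reduces to a classical one-variable computation analogous to the one producing $\cdq(\Z[1/p])=2$, yielding a non-zero $\lim^{1}$. Carrying out this isotypic reduction rigorously is the technical heart of the argument.
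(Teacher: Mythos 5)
Your reduction of part (i) to Corollary \ref{constrcor} and the upper bound $\cdq(G)\le\hdq(G)+1$ are fine, but the core of part (ii) has a fatal structural flaw. The hard case (and the one the paper spends all its effort on) is where $G$ is \emph{finitely generated} and not constructible; after passing to $G/[N,N]$ and then to $G/N^p$, one is left with a finitely generated extension $T\mono G\epi Q$ with $T$ an elementary abelian $p$-group of infinite rank and $Q$ polycyclic. A finitely generated group is never the union of a strictly ascending chain of proper subgroups, so your proposed realization $G=\colim G_i$ with $G_i=N_iS$ ascending constructible subgroups cannot exist: if $\bigcup_i N_iS=G$ with $G$ finitely generated then $G=N_iS$ for some $i$, forcing $N=N_i$ to be finitely generated as a group, contradicting non-constructibility. (Equivalently: $N$ is finitely generated as a $G$-operator group but not as a group, and an exhausting chain of finitely generated $S$-invariant $N_i$ is impossible.) So the entire $\lim^1$ machinery you invoke has nothing to apply to. A second, independent gap: the splitting $G=NS$ (virtually) that you extract from a ``Lennox--Robinson-type splitting'' is a theorem about \emph{minimax} groups; in the crucial case $T$ has infinite torsion rank, $G$ is not minimax, and the extension $T\mono G\epi Q$ may genuinely fail to split. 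The paper explicitly identifies this non-splitness as the main obstacle and overcomes it not by splitting $G$ itself but by using Roseblade's Theorems C and E to produce infinitely many $Q$-invariant finite-index subgroups $S=TJ_0\le T$, normal in $G$, for which the finite quotient extensions $T/S\mono G/S\epi Q$ \emph{do} split (via the vanishing $H^n(Q,T/TJ_0)=0$, which is where Lemma \ref{somecommutativeringtheory} is actually used --- not for an isotypic decomposition of dualizing modules as you suggest).

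The paper's actual mechanism is also different in kind from yours: rather than varying the group along a colimit, it fixes $G$ and constructs a single coefficient module $M=\bigoplus_n M_n$, where $M_n$ is the augmentation ideal of $\Q[T/T_n]$ made into a $G$-module via the Roseblade splittings. Each $M_n$ is $T$-acyclic with $M_n^T=0$, while $H^1(T,\bigoplus M_n)\cong H^0\bigl(T,\prod M_n/\bigoplus M_n\bigr)$ is nonzero, detected by an explicit element $(w_n)$ whose image survives in $H_0(Q,-)$; Poincar\'e duality for $Q$ and an LHS spectral-sequence corner argument then give $H^{h+1}(G,M)\cong H_0\bigl(Q,H^1(T,M)\bigr)\neq0$. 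Your final paragraph concedes that converting the failure of Mittag--Leffler into nonvanishing of $\lim^1$ is the ``technical heart,'' but as it stands there is no inverse system to which that heart could belong, and no argument addressing the non-split extensions. These are missing ideas, not just missing details.
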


\begin{proof}
As in (\cite{gildenhuysstrebel1981}, \S1.2) we have
$$h(\overline G)=\hd_{\Q}(\overline G)\le\cd_{\Q}(\overline G)\le h(\overline G)+1$$ for any quotient $\overline G$ of $G$ (including $G$ itself) and the condition $\cd_{\Q}(\overline G)=h(\overline G)$ is inherited by quotients of $\overline G$. If $G$ is constructible then we know that $\cd_{\Q}(G)=\hd_{\Q}(G)$ and there is nothing further to prove. Therefore we may assume that $G$ is not constructible.
Let $N$ be a normal nilpotent subgroup of $G$ such that $G/N$ is polycyclic-by-finite. Since $G$ is not constructible it follows that $G/[N,N]$ is also not constructible and by the above remarks we may replace $G$ by this quotient and so assume that $G$ is abelian-by-polycyclic-by-finite. If $G$ is locally polycyclic-by-finite and infinitely generated then the methods of \cite{gildenhuysstrebel1981} can be used to show that $\cd_{\Q}(G)=\hd_{\Q}+1$. So we assume now that $G$ is finitely generated and without loss of generality we may replace $G$ by a subgroup of finite index and so assume that $G$ has an abelian normal subgroup $N$ such that $G/N$ is polycyclic. In this case, $N$ is finitely generated as a $G$-operator group and may be regarded as a finitely generated $\Z Q$-module where $Q=G/N$. 

If $N/N^{p}$ is finite for all primes $p$ then classical arguments of Philip Hall show that $N$ has finite rank: this reduces to the minimax case which is completely resolved in \cite{krophollermartinezpereznucinkis}. So we focus on the case when there is a prime $p$ for which $T:=N/N^{p}$ has infinite rank. Again, we may pass to the quotient $G/N^{p}$ and so we assume that $T$ is an elementary abelian $p$-subgroup which is normal in $G$ and that $G/T$ is polycyclic. We could now go further and pass to a just-non-polycyclic quotient when it would become possible to invoke the analysis of \cite{robinsonwilson}. However we have found that the main obstacle in proceeding further lies in dealing with the possibility that $G$ is a non-split extension of $N$ by $G/N$ and such a non-split extension may persist even in the just-non-polycyclic case. 

We assume henceforth that our group $G$ is \emph{primitive} and just-non-polycyclic. 
The notion and theory of primitive just-non-polycyclic groups is to be found in \cite{robinsonwilson}.
We write $T$ for the Fitting subgroup of $G$ and we suppose that $G/T$ is the split extension $Q:=P\rtimes A$ of two free abelian groups $P$ and $A$ of finite rank. We may also suppose that $P$ is a \emph{plinth} in the group $Q$. The terminology plinth was introduced by Roseblade \cite{roseblade}. 

Our goal now is to prove that $\cd_{\Q}(G)\ge h(G)+1$. We shall do this by constructing a $\Q G$-module $M$ such that $H^{d+1}(G,M)$ is non-zero, where $d$ denotes the Hirsch length of $G$. We build $M$ as a direct sum of an infinite sequence of modules $(M_{n};\ n\ge1)$ where each $M_{n}$ is induced from a $\Q T$-module that has finite dimension as a $\Q$-vector space. In computing cohomology we take advantage of the fact that polycyclic groups satisfy Poincar\'e duality. We may pass to a subgroup of finite index if necessary in order to assume that $Q$ is both torsion-free and orientable as a Poincar\'e duality group. Thus we have isomorphisms $H^{j}(Q,V)\iso H_{d-j}(Q,V)$ for arbitrary $\Q Q$-modules $V$. Note that $G$ and $Q$ have the same Hirsch length $d$ because $T$ is torsion. We only need the extreme case of duality, namely
$H^{d}(Q,V)\iso H_{0}(Q,V)$ for all $V$. We shall choose the sequence of $\Q G$-modules $M_{n}$ so that
\begin{enumerate}
\item
$H^{*}(T,M_{n})$ vanishes;
\item
$H^{1}(T,\bigoplus M_{n})$ is non-zero.
\end{enumerate}
In addition, we shall make sure that we have some control over some of the non-zero elements of $H^{1}(T,\bigoplus M_{n})$. Since countable locally finite groups have cohomological dimension one over $\Q$, we know that $H^j(T,\blah)$ vanishes when $j>1$. Also, condition (i) above will imply that for our chosen sequence, $H^{0}(G,\bigoplus M_{n})=0$. A spectral sequence corner argument together with Poincar\'e duality then yields 
$$H^{d+1}(G,M)\iso H^{d}(Q,H^{1}(T,M))\iso H_{0}(Q,H^{1}(T,M)).$$
The short exact sequence
\[
0\to\bigoplus M_{n}\to\prod M_{n}\to\frac{\prod M_{n}}{\bigoplus M_{n}}\to0
\]
leads to a long exact sequence in the cohomology of $T$ which reduces in our context to an isomorphism
\[
H^{1}\left(T,\bigoplus M_{n}\right)\iso H^{0}\left(T,\frac{\prod M_{n}}{\bigoplus M_{n}}\right).
\]
For a $T$-module $V$, we write $V^{T}$ for the subspace of $T$-fixed points: that is $V^{T}=H^{0}(T,V)$. So our goal is now to make choices of the $M_{n}$ in such a way that 
\[
H_{0}\left(Q,\left(\frac{\prod M_{n}}{\bigoplus M_{n}}\right)^{T}\right)
\]
is non-zero.

\subsection*{Application of a theorem of Roseblade} In order to choose suitable modules $M_n$ we first need a set $\mathcal S$ of subgroups $S$ of $T$ which are normal in $G$, of arbitrarily large finite index in $T$, and are such that the short exact sequences $T/S\mono G/S\epi G/T$ split. We shall use an argument of Roseblade \cite{roseblade} to achieve this.

Recall that $P$ is a plinth in the group $Q=PA$. Also, $T$ is an elementary abelian $p$-group for some prime $p$ and the action of $G$ by conjugation on $T$ makes $T$ into an $\F_pQ$-module, since $Q=G/T$.
Roseblade's (\cite{roseblade}, Theorem E) shows that if $\lambda$ is a non-zero element of the group ring $R=\F_pP$ then there is a maximal ideal $J$ of $R$ which contains no conjugate of $\lambda$. The proof of (\cite{roseblade}, Theorem E) in fact shows that there are infinitely many such $J$ and the quotient fields $\F_pP/J$ are of unbounded (finite) cardinality. Let $\mathcal J$ denote the set of all these maximal ideals.

The Robinson--Wilson theory \cite{robinsonwilson} shows that $T$ is torsion-free of finite rank as an $\F_pP$-module. By Roseblade's (\cite{roseblade}, Theorem C) there is a free $R$-submodule $U$ of $T$ and a non-zero ideal $\Lambda$ of $R$ such that every finitely generated $R$-submodule of $T/U$ is annihilated by a product $\Lambda^{x_1}\Lambda^{x_2}\cdots\Lambda^{x_n}$ of conjugates of $\Lambda$ under $Q$. 
Now if $J$ belongs to $\mathcal J$ then 
$J+\Lambda^{x_1}\Lambda^{x_2}\cdots\Lambda^{x_n}=R$ for any choice of finite product of conjugates of $\Lambda$. It follows that $TJ\cap U=UJ$ because any element $y$ of $TJ\cap U/UJ$ is annihilated by some product $\Lambda^{x_1}\Lambda^{x_2}\cdots\Lambda^{x_n}$ and hence there is an expression $1=j+z$ with $zJ\subseteq UJ$ and $j\in J$. Thus $y=yj+zj\in UJ$. This shows that $TJ<T$ for any such ideal $J$. Since $T$ has finite rank, we deduce that $T/TJ$ is both finite and non-trivial.

We appeal to Lemma \ref{somecommutativeringtheory} with $S:=\F_pP$ to deduce that $H^n(P,T/TJ)=\ext_{\F_pP}^n(\F_p,T/TJ)=0$ for all $n$ and all $J\in\mathcal{J}$ other than the augmentation ideal $\mathfrak p$. Let $J$ be a member of $\mathcal{J}\setminus\{\mathfrak p\}$ and let $J_0$ be the intersection of the conjugates of $J$. Then we still have $H^n(P,T/TJ_0)=0$ for all $n$. The ideal $J_0$ of $\F_pP$ is $Q$-invariant, so $TJ_0$ is an $\F_pQ$-submodule. A spectral sequence argument shows at once that $H^n(Q,T/TJ_0)=0$ for all $n$.
In the group $G$, $TJ_0$ is a \emph{normal} subgroup and we can form the quotient $G/TJ_0$ which is an extension of $T/TJ_0$ by $Q$. Our vanishing cohomology result shows that this extension is split. 

In conclusion, we now know that there are normal subgroups $S:=TJ_0$ of $G$ having finite index in $T$ such that $G/S$ is the split extension of $T/S$ by $Q$ and such that the factors $T/S$ are finite of unbounded order. 
These $S$ populate our set $\mathcal S$.

\subsection{A choice of finite subgroups exhausting $T$}

Choose an ascending chain $F_{1}<F_{2}<\dots<F_{j}<\cdots$ of finite subgroups of $T$ such that $T=\bigcup_{j}F_{j}$. Since the subgroups of 
$\mathcal S$ have unbounded index we can find, for each $n$, a subgroup $T_{n}\in\mathcal S$ of index greater than $|F_{n}|$, and therefore such that $T_{n}+F_{n}<T$.

\subsection*{The definition of $M_{n}$}
For each $n$, let $M_{n}$ be the augmentation ideal of the rational group algebra $\Q[T/T_{n}]$. The action of $Q$ on $T$ by conjugation induces an action of $Q$ on $T/T_{n}$ and this extends to an action of the split extension $T/T_{n}\rtimes Q$ on $M_{n}$. 
Since the extension $T/T_{n}\mono G/T_{n}\epi Q$ is split, we may regard this as an action of $G/T_{n}$ and through the natural surjection $G\to G/T_{n}$ we obtain an action of $G$ on $M_{n}$. 

Notice that each $M_{n}$ is finite dimensional as a $\Q$-vector space.  
Since $T$ is locally finite, it follows that each $M_{n}$ is injective as a $\Q T$-module. [To see this note first that every $\Q T$-module $V$ is flat since it is the direct limit of the projective modules $V\otimes_{\Q T_{n}}\Q T$. The dual $V^{*}:=\hom_{\Q}(V,\Q)$ is therefore injective and again finite dimensional. It follows that for any $V$, the double dual of $V$ is injective and so in particular if $V$ is finite dimensional then $V$ itself is injective, being isomorphic to $V^{**}$.] 

By setting $M_{n}$ equal to the augmentation ideal in $\Q[T/T_{n}]$ we have arranged that we have $M_{n}^{T}=0$ for all $n$. Being also injective, the modules $M_{n}$ are cohomologically acyclic as asserted in (i) above. 

\subsection*{A sequence $(w_{n})$ of elements of $M_{n}$} The following argument is essentially the same as that in (\cite{Kropholler1985}, Proposition 2.4); for the reader's convenience we provide careful details.

As a $\Q$-vector space, $\Q[T/T_{n}]$ has a basis in bijective correspondence with the elements of $T/T_{n}$ and we write $e_{t}$ for the basis vector which naturally corresponds to $t\in T/T_{n}$. The vectors $u_{n}:=\sum_{t\in T/T_{n}}e_{t}$ and $v_{n}:=\sum_{t\in T_{n}+F_{n}/T_{n}}e_{t}$ both belong to $\Q[T/T_{n}]^{F_{n}}$. Therefore the vector $w_{n}:=u_{n}-|T:T_{n}+F_{n}|v_{n}$ is an element of $\Q[T/T_{n}]^{F_{n}}$. Moreover, the augmentation map determined by $e_{t}\mapsto1$ evaluates to zero on $w_{n}$ so we have
\begin{enumerate}
\item[(iii)]
$w_{n}\in M_{n}\cap\Q[T/T_{n}]^{F_{n}}=M_{n}^{F_{n}}$.
\end{enumerate}
The action of $Q$ by conjugation on $T$ induces an action on $M_{n}$ which permutes the basis $(e_{t}-e_{1};\ 1\ne t\in T/T_{n})$. The map $\mu:M_{n}\to\Q$ defined by $e_{t}-e_{1}\mapsto1$ is a $\Q Q$-module homomorphism to the trivial module. Under this map we find that

$w_{n}=\sum_{1\ne t\in T/T_{n}}(e_{t}-e_{1})-
|T:T_{n}+F_{n}|\sum_{1\ne t\in T_{n}+F_{n}/T_{n}}(e_{t}-e_{1})$ maps to
$|T:T_{n}|-1-|T:T_{n}+F_{n}|(|T_{n}+F_{n}:T_{n}|-1)
=|T:T_{n}+F_{n}|-1$ which is non-zero because of the strict inclusion
$T_{n}+F_{n}<T$. By universality, $\mu$ factors through $H_0(Q,M_n)$ and hence 
\begin{enumerate}
\item[(iv)]
the image of $w_{n}$ under the natural map $M_{n}\to H_{0}(Q,M_{n})$ is non-zero.
\end{enumerate}
The sequence $(w_{n},\ n\in\N)$ is an element of the product
$\prod M_{n}$ with the property that for all $t\in T$, it differs from the translate $(w_{n}t)$ in only finitely many coordinates. 
[If $t\in F_{n_{0}}$ then $w_{n}t=w_{n}$ for all $n\ge n_{0}$, and since the chain of $F_{j}$ exhausts $T$, each $t$ is to be found in some $F_{n_{0}}$.] 
Therefore $(w_{n})+\bigoplus M_{n}$ is an element of $\left(\tfrac{\prod M_{n}}{\bigoplus M_{n}}\right)^{T}$ which has non-zero image in 
$H_{0}\left(Q,\left(\tfrac{\prod M_{n}}{\bigoplus M_{n}}\right)^{T}\right)$. The diagram in Figure \ref{fig1} illustrates what is going on and is explained in its caption.

\begin{figure}[htbp]
\[
\xymatrix
{
&M\ar@{>>}[r]\ar@{>->}[d]
&H_{0}(Q,M)\ar[r]\ar@{>->}[d]
&\bigoplus\left(H_{0}(Q,M_n)\right)\ar@{>->}[d]\\
&M^*\ar@{>>}[r]^{\chi}\ar@{>>}[d]
&H_{0}(Q,M^*)\ar[r]^{\psi}\ar@{>>}[d]
&\prod (H_{0}(Q,M_n))\ar@{>>}[d]^{\omega}\\
(w_{n})\in\prod M_n^{F_n}\ar[ur]^{\phi}\ar[r]\ar@{.>}[dr]_{\alpha}
&\dfrac{M^*}{M}\ar@{>>}[r]
&H_{0}\left(Q,\tfrac{M^*}{M}\right)\ar[r]^{\delta}
&\dfrac{\prod \left(H_{0}(Q,M_n)\right)}{\bigoplus \left(H_{0}(Q,M_n)\right)}\\
&\left(\dfrac{M^*}{M}\right)^T\ar@{>>}[r]_{\beta\blah\blah}\ar@{>->}[u]
&H_{0}\left(Q,\left(\tfrac{M^*}{M}\right)^T\right)\ar[u]^{\gamma}\\
}
\]
\caption{A diagrammatic illustration of the proof of Theorem III.8:
$M$ denotes $\bigoplus M_{n}$ and $M^{*}$ denotes $\prod M_{n}$.  The natural maps making up the commutative diagram have been labelled in certain cases for convenience. The image of $(w_{n})$ under the composite $\beta\alpha$ is the desired non-zero cohomology class. To check it is non-zero, observe that the image of $(w_{n})$ under the composite $\delta\gamma\beta\alpha$ is the same as its image under the composite $\omega\psi\chi\phi$ and the latter is seen to be non-zero by using the properties (iii) and (iv).}
\label{fig1}
\end{figure}

This completes the proof of III.8.
\end{proof}

\begin{remarks} We used Roseblade's theorem to obtain the splittings that were crucial
in the preceding proof. If our original sequence $N\mono G\epi Q$
was itself split, this appeal to Roseblade's theorem would not have been necessary. Therefore another special case where we can obtain the same conclusion is

\begin{theorem}\label{withD}
Suppose that the finitely generated residually finite group $G$ is the split extension of an infinite abelian torsion group by a constructible soluble group. Then the rational cohomological dimension of $G$ is $h(G)+1$.
\end{theorem}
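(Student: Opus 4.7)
The plan is to adapt the argument of Theorem~\ref{X0case}, replacing its appeal to Roseblade's theorem by the splitting hypothesis and using the residual finiteness of $G$ to produce the required filtration of $T$. Write $G = T\rtimes Q$, where $T$ is the infinite abelian torsion kernel and $Q$ is the constructible soluble quotient, and set $d := h(G) = h(Q)$. Since $G$ is finitely generated and therefore countable, the general inequality $\cdq(G)\le \hdq(G)+1$ applies; combined with $\hdq(G) = h(G) = d$ (from Theorem~\ref{hd0} for the upper bound, and the retract $Q$ together with Corollary~\ref{constrcor} for the matching lower bound), this yields the automatic bound $\cdq(G)\le d+1$. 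The task is to establish the matching lower bound by exhibiting a $\Q G$-module $M$ with $H^{d+1}(G,M)\ne 0$.

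After passing to a subgroup of finite index we may assume $Q$ is torsion-free; by Baumslag--Bieri (compare Proposition~\ref{constr}), $Q$ is then an inverse duality group of dimension $d$ with finitely generated free abelian dualizing module $D$, giving $H^d(Q,V)\cong H_0(Q, V\otimes_{\Z} D)$ for every $\Q Q$-module $V$. Residual finiteness of $G$ combined with the infinitude of $T$ supplies a descending chain of subgroups $T_n$ of $T$ that are normal in $G$ and have $|T:T_n|$ finite and unbounded; because $T\mono G\epi Q$ is split by hypothesis, each quotient $G/T_n\cong(T/T_n)\rtimes Q$ is split as well, so the augmentation ideal $M_n$ of $\Q[T/T_n]$ inherits a natural $\Q G$-module structure via the projection $G\to G/T_n$. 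Set $M := \bigoplus_n M_n$.

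Exactly as in the proof of Theorem~\ref{X0case}, each $M_n$ is $\Q T$-injective (since $T$ is locally finite and $M_n$ is finite-dimensional over $\Q$) and $M_n^T = 0$; the Lyndon--Hochschild--Serre spectral sequence for $T\to G\to Q$, together with $\cdq(T)\le 1$ and $\cdq(Q)=d$, collapses to an isomorphism $H^{d+1}(G,M)\cong H^d(Q, H^1(T,M))$, while the short exact sequence $0\to M\to \prod_n M_n\to \prod_n M_n/M\to 0$ identifies $H^1(T,M)\cong\bigl(\prod_n M_n/M\bigr)^T$. Choose an ascending chain $F_1<F_2<\cdots$ of finite subgroups exhausting $T$, refine the indexing of the $T_n$ so that $|T:T_n|>|F_n|$ (whence $T_n+F_n$ is a proper subgroup of $T$), and form the elements $w_n := u_n - |T:T_n+F_n|\cdot v_n \in M_n^{F_n}$ exactly as in the earlier proof. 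The sequence $(w_n)$ then represents a $T$-stable class in $\prod_n M_n/M$, hence a class in $H^1(T,M)$.

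The principal obstacle is to verify that this class has non-zero image in $H^d(Q, H^1(T,M))\cong H_0(Q, D\otimes H^1(T,M))$. In the polycyclic setting of Theorem~\ref{X0case}, $Q$ is a Poincar\'e duality group, so $D\cong\Z$ with trivial action and the task reduces to non-vanishing in $H_0(Q, H^1(T,M))$, handled by the diagrammatic chase in Figure~\ref{fig1}. For a general constructible soluble $Q$ the dualizing module $D_{\Q}$ may be a non-trivial finite-dimensional $\Q Q$-module; one must then track the tensor twist by $D_{\Q}$, composing the evaluation map $\mu\colon M_n\to\Q$ used in Theorem~\ref{X0case} with the identity on $D_{\Q}$ and running the same cofinality diagram. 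Using that $D$ is independent of $n$ and that $D_{\Q}$ is finite-dimensional, the non-vanishing of each $w_n$-component in $H_0(Q,M_n)$ passes through the tensor twist and assembles to give the required non-zero class in $H_0(Q, D\otimes H^1(T,M))$; whence $\cdq(G)\ge d+1$.
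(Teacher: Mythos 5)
Your overall strategy is exactly the paper's: the splitting hypothesis substitutes for the appeal to Roseblade's Theorem E, residual finiteness supplies the normal subgroups $T_{n}$ of unbounded finite index in $T$, and the modules $M_{n}$, the exhausting chain $(F_{n})$ and the vectors $w_{n}$ are reused verbatim, with the corner isomorphism $H^{d+1}(G,M)\iso H^{d}(Q,H^{1}(T,M))$ and duality for the constructible quotient $Q$ finishing the job. Up to and including the construction of the class $(w_{n})+\bigoplus M_{n}$ in $H^{1}(T,M)$, this is the intended argument.

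The gap is in your treatment of the dualizing module. You assert that, after passing to a torsion-free finite-index subgroup, $Q$ is an inverse duality group with \emph{finitely generated} free abelian dualizing module $D$, and your final paragraph uses the finite-dimensionality of $D_{\Q}$ to push the non-vanishing of the $w_{n}$ through the tensor twist. Baumslag--Bieri only give that $D=H^{d}(Q,\Z Q)$ is free abelian; it is typically of infinite rank (already for $Q\iso\Z[1/2]\rtimes\Z$ the module $H^{2}(Q,\Z Q)$ is free abelian of infinite rank), and the paper explicitly warns that ``there may be a (large) dualising module $D$ and no subgroup of finite index satisfying Poincar\'e duality.'' So the premise of your last step is false, and the step itself is the one place where something genuinely has to be checked: one must exhibit an element of $\prod M_{n}^{F_{n}}\otimes D$ whose image survives in $\prod(M_{n}\otimes_{Q}D)\big/\bigoplus(M_{n}\otimes_{Q}D)$, and this is not a consequence of the untwisted non-vanishing of $w_{n}$ in $H_{0}(Q,M_{n})$ plus a dimension count (indeed $H_{0}(Q,D)$ itself can vanish, so composing your map $\mu\otimes 1_{D}$ with coinvariants need not detect anything). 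The paper's Figure 2 is precisely the device for doing this: every object in the Figure 1 diagram is tensored with the full, possibly infinitely generated, $D$ before taking $Q$-coinvariants, and the class is chased through the twisted maps $\omega\psi\chi\phi$ rather than reduced to the untwisted computation. Your write-up needs to replace the appeal to ``$D_{\Q}$ finite-dimensional'' by this twisted diagram chase (or by an explicit choice of $\delta\in D$ with $w_{n}\otimes\delta$ non-zero in $M_{n}\otimes_{Q}D$ for infinitely many $n$).
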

\end{remarks}

The argument is exactly the same, using as before the two sequences 
 $(T_{n})$ and $(F_{n})$ inside the torsion kernel. Now we use the fact that constructible 
 groups satisfy duality. In this generality there may be a (large) dualising module $D$ and no subgroup of finite index satisfying Poincar\'e duality but this simply has to be carried through the calculation as shown in the diagram.

\begin{figure}[htbp]\[
\xymatrix
{
&M\otimes D\ar@{>>}[r]\ar@{>->}[d]
&M\otimes_QD\ar[r]\ar@{>->}[d]
&\bigoplus(M_n\otimes_QD)\ar@{>->}[d]\\
&M^*\otimes D\ar@{>>}[r]\ar@{>>}[d]
&M^*\otimes_QD\ar[r]\ar@{>>}[d]
&\prod (M_n\otimes_QD)\ar@{>>}[d]\\
\prod M_n^{F_n}\otimes D\ar[ur]\ar[r]\ar@{.>}[dr]
&\dfrac{M^*}{M}\otimes D\ar@{>>}[r]
&\dfrac{M^*}{M}\otimes_QD\ar[r]
&\dfrac{\prod (M_n\otimes_QD)}{\bigoplus (M_n\otimes_QD)}\\
&\left(\dfrac{M^*}{M}\right)^T\otimes D\ar@{>>}[r]\ar@{>->}[u]
&\left(\dfrac{M^*}{M}\right)^T\otimes_QD\ar[u]\\
}
\]
\caption{The diagram is used in the proof of Theorem \ref{withD} in just the same way that the diagram of Figure 1 was employed above.}
\label{fig2}
\end{figure}

\subsection{Uncountable groups}
Consideration of the uncountable case of Conjecture III.1 leads to the following.

\begin{conjecture}\label{c:want} Let $T\mono G\epi Q$ be a short exact sequence of 
countable
groups, where $T$ is infinite, locally finite, and abelian, and where $Q$
is virtually polycyclic with Hirsch length $h(Q)=d$. Then
$H^i(G, F)= 0$ for all free $\Q G$-modules $F$ and all $i\le d$.
\end{conjecture}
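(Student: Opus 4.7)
The plan is to combine the Lyndon--Hochschild--Serre spectral sequence with a colimit description of $G$ coming from its countable structure. Apply LHS to the extension $T\mono G\epi Q$:
\[E_2^{p,q}=H^p(Q,H^q(T,F))\Rightarrow H^{p+q}(G,F).\]
Because $T$ is countable locally finite and we work over $\Q$, $\cd_{\Q}(T)\le 1$, so only the rows $q=0,1$ can be non-zero. For $F=\bigoplus_I \Q G$, the fixed subspace $F^T=\bigoplus_I(\Q G)^T$ vanishes: a $T$-fixed element of $\Q G$ would have coefficients constant along each right coset $Tg$, but the support is finite while $T$ is infinite. Thus the $q=0$ row is zero and the spectral sequence collapses to
\[H^{i+1}(G,F)\cong H^i(Q,H^1(T,F)),\quad i\ge 0.\]
The conjecture is therefore equivalent to the assertion that $H^i(Q,H^1(T,F))=0$ for all $i\le d-1$.

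To exploit this reduction I would replace $Q$ by a normal torsion-free polycyclic subgroup of finite index (which, over $\Q$, does not affect cohomology, by Maschke averaging) and use the fact that such a group is an orientable Poincar\'e duality group of dimension $d$ over $\Q$. The required vanishing then becomes $H_k(Q,H^1(T,F))=0$ for all $k\ge 1$, i.e.\ a flatness-like statement for the $\Q Q$-module $H^1(T,F)$. An essentially equivalent but more concrete realisation is to write $G$ as the directed union of its finitely generated subgroups $G_n$: for $n$ sufficiently large each $G_n$ is virtually polycyclic of Hirsch length $d$, hence constructible and, by Proposition \ref{constr} together with Maschke averaging over a torsion-free polycyclic subgroup of finite index, a virtual Poincar\'e duality group of dimension $d$ over $\Q$. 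Since each such $G_n$ is of type FP and $F$ restricts to a free $\Q G_n$-module, one obtains $H^i(G_n,F)=0$ for every $i\ne d$. The countable $\lim$--$\lim^{1}$ short exact sequence
\[0\to \lim^{1} H^{i-1}(G_n,F)\to H^i(G,F)\to \lim H^i(G_n,F)\to 0\]
then immediately yields $H^i(G,F)=0$ for $i\le d-1$, and reduces the conjecture to the single residual identity $\lim_n H^d(G_n,F)=0$.

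The genuine obstacle is this final vanishing. The transition map $H^d(G_{n+1},F)\to H^d(G_n,F)$ is a restriction between top-dimensional duality modules, and the vanishing of its inverse limit is a Mittag--Leffler-type decay statement whose verification calls for a delicate analysis of how the dualising modules of the various $G_n$ interact with each other and with the $T$-action. A promising route is to combine the LHS reduction of the first paragraph with the colimit description, so that these restriction maps become visibly the maps induced by the tower $(F^{T_n})_n$ of $T_n$-invariants, where $T=\bigcup_n T_n$ is a chain of $G$-normal finite characteristic subgroups chosen so that the extensions $T/T_n\mono G/T_n\epi Q$ split (after a Roseblade-style adjustment as in the proof of Theorem III.8). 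Translating the required vanishing into $Q$-covariant information on this tower and collapsing via the identification $H^1(T,F)\cong \lim^{1} F^{T_n}$ should, one hopes, supply the needed decay; but closing this loop in general will, I expect, require genuinely new input beyond the tools deployed for Theorem III.8.
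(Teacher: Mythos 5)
You should first note that the statement you were asked to prove is stated in the paper as a \emph{conjecture}, not a theorem: the paper proves only the case $d=1$ (via the observation that a finitely generated $G$ of this form has one end, combined with the countable $\lim$--$\lim^{1}$ sequence) and explicitly records that the authors are unable to resolve even the case $d=2$. So no complete argument exists in the paper to compare against, and your own proposal, which candidly ends by saying the last step ``will require genuinely new input,'' is likewise not a proof. Your opening reduction is correct and matches the paper's own Remark (2) following the conjecture: since $T$ is countable and locally finite, $\cd_{\Q}(T)\le 1$; since $T$ is infinite, $F^{T}=0$ for $F$ free; hence the LHS spectral sequence collapses to $H^{i+1}(G,F)\cong H^{i}(Q,H^{1}(T,F))$, and everything hinges on understanding the $\Q Q$-module $H^{1}(T,F)$. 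That is exactly where the paper also stops.

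However, your ``more concrete realisation'' contains a genuine error that invalidates the claimed intermediate vanishing. You assert that for $n$ large the finitely generated subgroups $G_{n}$ of $G$ are virtually polycyclic of Hirsch length $d$, hence constructible Poincar\'e duality groups with $H^{i}(G_{n},F)=0$ for $i\neq d$. This is false: a finitely generated subgroup of $G$ can meet $T$ in an \emph{infinite} locally finite subgroup. The lamplighter group $C_{p}\wr C_{\infty}$ (with $d=1$) is already of the form covered by the conjecture, is finitely generated, and is not virtually polycyclic or constructible --- indeed the paper points out it has $\cd_{k}=2$ while $h=1$. Consequently the step ``the $\lim$--$\lim^{1}$ sequence immediately yields $H^{i}(G,F)=0$ for $i\le d-1$'' does not follow, because its input $H^{i}(G_{n},F)=0$ for $i\neq d$ is unavailable; you would at best be running an induction on groups of the same difficulty as $G$ itself. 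The honest content of your proposal is therefore the spectral-sequence reduction of the first paragraph together with the (correctly identified, but unproved) residual problem of controlling $H^{1}(T,F)\cong\lim^{1}F^{T_{n}}$ as a $\Q Q$-module; this is consistent with, but does not go beyond, what the paper already establishes.
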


\begin{remarks}
(1) This statement is true in the case $d=1$. 
Indeed, if $G$ is finitely generated, then $H^1(G,F)=0$ if and only if $G$
has one end, which $G$ does, since it is not virtually cyclic and does not
contain a non-abelian free group. And in the general case we 
write $G=\bigcup G_n$ with the $G_n$ finitely generated, 
and consider the following short exact sequence of functors, which is in effect the special case of the standard spectral sequence involving the derived functors of $\lim$; details may be found in Jensens' text \cite{Jensen}, see especially Th\'eor\`eme 4.2:
$$
0\to \lim^1 H^{i-1}(G_n, - ) \to H^i(G, -)\to \lim H^i(G_n, -)
\to 0.
$$

\medskip
(2) We are unable to resolve the conjecture in the case $d=2$. In that case,
consideration of the spectral sequence for the group extension $T\mono G\epi G/T$
shows that $H^2(G,F)\cong H^1(G/T, H^1(T,F))$. Alternatively, one might attempt
to proceed by induction on $h(G/Q)$: pass to a subgroup
of finite index and write $G=G_0\rtimes Z_1$ with $G_0=T\rtimes Z_2$
where $Z_1$ and $Z_2$ are infinite cyclic. The spectral sequence for $G_0\rtimes Z_1$
gives $H^2(G,F)\cong H^0(Z_1, H^2(G_0,F))$, and the one for $T\rtimes Z_2$
gives $$H^0(Z_1, H^2(G_0,F))\cong H^1(Z_2, H^1(T,F)).$$ Thus, with either approach, one
is left to understand modules of the form $H^1(T,F)$, with $F$ a free $\Q G$-module.
\end{remarks}

\begin{theorem}\label{t3}
Let $G$ be an uncountable group that is nilpotent-by-polycyclic-by-finite group. Then $G$ has finite rational cohomological dimension if and only if $\hdq(G)<\infty$ and $|G|<\aleph_\omega$. 

If $\hdq(G)<\infty$ and $|G|=\aleph_n$, where $n\in\N$, then $\cdq(G)=\hdq(G)+n+1$,
provided that Conjecture \ref{c:want} is true.
\end{theorem}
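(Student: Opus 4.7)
The strategy is to reduce to a standard extension and then to split into the (easy) upper bound and the (hard) lower bound, the latter being where Conjecture~\ref{c:want} enters. I would first reduce, as in Theorem~\ref{X0case}, to the case where $G = T \rtimes Q$ is a split extension with $T$ infinite locally finite abelian of cardinality $\aleph_n$ and $Q$ torsion-free polycyclic of Hirsch length $d := h(G) = \hdq(G)$. This reduction is legitimate because torsion-free nilpotent groups of finite Hirsch length are countable, so the uncountability of $G$ must be concentrated in its locally finite radical; the Roseblade-style splitting used in Theorem~\ref{X0case} handles the step from non-split to split extensions, and after a further passage to a subgroup of finite index one may take $Q$ to be orientable as a Poincar\'e duality group of dimension $d$.

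For the upper bound $\cdq(G) \le d + n + 1$ I would apply the Lyndon--Hochschild--Serre spectral sequence of $T \mono G \epi Q$. Here $\cdq(Q) = d$ by Proposition~\ref{constr}, and $T$ is a filtered colimit, indexed by a set of cardinality $\aleph_n$, of finite groups (which have $\cdq = 0$ over $\Q$), so Lemma~\ref{ccolim} gives $\cdq(T) \le n+1$; the spectral sequence then yields the claimed bound. The finiteness statement follows in one direction from this bound, and in the other from the observation that any $G$ with $|G| \ge \aleph_\omega$ contains abelian sections of arbitrarily large cardinality $\aleph_n$, and the Osofsky-style lower bound on cohomological dimension of such sections forces $\cdq(G) = \infty$.

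The lower bound $\cdq(G) \ge d + n + 1$ is the heart of the argument and the place where Conjecture~\ref{c:want} is used. I would take $M = \Q G$ and write $G$ as a continuous well-ordered union
\[
G = \bigcup_{\alpha < \omega_n} G_\alpha, \qquad G_\alpha = T_\alpha \rtimes Q,
\]
where $(T_\alpha)$ is a continuous chain of countable infinite subgroups of $T$ with $T = \bigcup_\alpha T_\alpha$. Since $\Q G$ restricts to a free $\Q G_\alpha$-module, Conjecture~\ref{c:want} applied to the countable extension $T_\alpha \mono G_\alpha \epi Q$ gives $H^i(G_\alpha, \Q G) = 0$ for $i \le d$, while Theorem~\ref{X0case} forces $H^i(G_\alpha, \Q G) = 0$ for $i \ge d+2$. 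The filtered-colimit spectral sequence
\[
E_2^{p,q} = {\lim_\alpha}^p H^q(G_\alpha, \Q G) \;\Longrightarrow\; H^{p+q}(G, \Q G)
\]
is thus concentrated in the single row $q = d + 1$, and collapses to give natural isomorphisms $H^{p+d+1}(G, \Q G) \cong \lim^p_\alpha H^{d+1}(G_\alpha, \Q G)$ for all $p \ge 0$. It remains to exhibit a non-zero element of $\lim^n$ in this system, which I would arrange by choosing the filtration $(T_\alpha)$ so that the inverse system of $(d{+}1)$-st cohomology groups is cofinal with a standard Jensen example of a well-ordered $\omega_n$-indexed inverse system having non-vanishing $\lim^n$; the required non-trivial transition behaviour should be read off from the modules constructed in the proof of Theorem~\ref{X0case} applied at each level.

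The main obstacle is precisely the non-vanishing of $\lim^n_\alpha H^{d+1}(G_\alpha, \Q G)$. In the countable case ($n = 0$) the $\prod / \bigoplus$ trick of Theorem~\ref{X0case} produces a non-zero class directly, but for $n \ge 1$ the analogous construction must be iterated along the transfinite chain without destroying the class at limit ordinals. Conjecture~\ref{c:want} is what makes this tractable: it removes every row below $q = d + 1$ from the spectral sequence, so that the diagonal $p + q = d + n + 1$ has only the single contribution from $(p, q) = (n, d+1)$ and the cohomology class produced by $\lim^n$ is not absorbed by competing terms.
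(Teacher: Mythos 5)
Your reduction to a split extension $T\rtimes Q$ with $T$ locally finite abelian, and your upper bound $\cdq(G)\le h(G)+n+1$ via Lemma \ref{ccolim} and the LHS spectral sequence, are sound and agree with what the paper does (the paper's reduction is in fact more elementary: it quotients by the countable normal subgroup $H\cap N$ rather than invoking any Roseblade-style splitting, which in Theorem \ref{X0case} is used for a different purpose, namely constructing the coefficient modules $M_n$).

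The lower bound, however, contains a genuine gap. After correctly collapsing the Jensen spectral sequence to $H^{p+d+1}(G,\Q G)\iso\lim^p H^{d+1}(G_\alpha,\Q G)$ (granting Conjecture \ref{c:want}), you must exhibit a non-zero element of $\lim^n$ of this specific inverse system, and at this point you only assert that one can "arrange" the filtration to be cofinal with "a standard Jensen example" with non-vanishing $\lim^n$. This is precisely the hard part of the theorem, and nothing in your proposal controls the restriction maps $H^{d+1}(G_\beta,\Q G)\to H^{d+1}(G_\alpha,\Q G)$ well enough to make such an arrangement: these maps are determined by the group theory, not chosen freely, and direct non-vanishing computations of $\lim^n$ over $\omega_n$ in this setting are exactly the kind of statement that (as the paper notes for the $h=0$ case of Holt's conjecture) can be sensitive to set theory. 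There is also a secondary structural problem: a chain of \emph{countable} subgroups indexed by $\omega_n$ cannot be continuous once $n\ge 2$, and a one-step jump from countable to $\aleph_n$ loses the inductive leverage. The paper avoids the $\lim^n$ computation entirely. It inducts on $n$, writing $G=\bigcup_\lambda G_\lambda$ with $G_\lambda$ of cardinality $\aleph_{n-1}$, and proves the \emph{vanishing} statement $H^i(G,VG)=0$ for all $i\le h+n$ (Addendum \ref{a:conj}) by a transfinite cocycle-patching argument (Lemma \ref{l:d}) combined with Holt's finite-index trick: each class eventually restricts into the summand $H^d(G_\mu,VG_\mu)$ (Lemma \ref{l:lambda-mu}), the chain is built so that each $G_\lambda$ is a proper finite-index subgroup of some $G_\lambda'\le G_{\lambda+1}$ (Lemma \ref{Holt's condition star}), and such a restriction is then forced to vanish (Lemma \ref{l:Holt*}). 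The non-vanishing of $H^{h+n+1}(G,F)$ for some free $F$ then comes for free from the general fact that a group of finite cohomological dimension $m$ has non-zero $H^m$ with free coefficients, so no $\lim^n$ class ever needs to be produced. To repair your argument you would need either to import this Holt-style machinery or to supply an actual proof that $\lim^n H^{d+1}(G_\alpha,\Q G)\ne 0$, which you have not done.
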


The bound $\cdq(G)\le \hdq(G)+n+1$ is covered by the discussion following Theorem II.3. 
The proof of the corresponding lower bound on $\cdq(G)$
will be established by induction on $n$, with Conjecture \ref{c:want} providing the
base case of a subsidiary induction that establishes:

\begin{addendum}\label{a:conj}  
Let $1\to T\to G\to Q\to 1$ be a short exact sequence of 
groups where $G$ has cardinality $\aleph_n$, while
$T$ is infinite, locally finite and abelian, and $Q$
is virtually polycyclic with Hirsch length $h(Q)=h$. Then
$H^i(G, F)= 0$ for all free modules $F$ and all $i\neq h+n+1$. 
\end{addendum}

The strategy of the proof
is motivated by an argument of Derek Holt \cite{holt1981}.   
\medskip

We begin the main argument.
Suppose that $\hdq(G)<\infty$ and $|G|=\aleph_n$. Passing to a 
subgroup of finite index we may assume there is a nilpotent
subgroup $N\normal G$ such that $Q=G/N$ is polycyclic. Note that ${\overline {N}} = N/[N,N]$ has
cardinality at most $|N|=\aleph_n$. Since $h(G)$ is finite, $h({\overline {N}})$ is finite,
so the torsion-free rank of ${\overline {N}}$ is finite. Thus we may choose a countable
subgroup ${\overline {M}}<{\overline {N}}$, invariant under the conjugation action of $G$ such that
${\overline {N}}/{\overline {M}}$ is a torsion group. Let $M$ be the preimage of ${\overline {M}}$ and note
that $N/M = {\overline {N}}/{\overline {M}}$ has cardinality $\aleph_n$. As in the countable case, we
may replace $G$ by $G/M$. In other words, there is no loss
of generality in assuming that $N$ is a locally finite abelian group, and
we do so henceforth. 

Choose a finitely generated $H$ in $G$ such that $HN=G$ (such exists because
$G/N$ is finitely generated). Now $H\cap N$ is {\em normal} in $G$: it is normal
in $N$ because $N$ is abelian, and it is normal in $H$ because $N$ is normal in $G$.
Since $H\cap N$ is countable, quotienting $G$ by $H\cap N$ does not change its
cardinality. Thus, without loss of generality, we may assume that $H\cap N =1$,
which implies that $H$ is polycyclic.

Regard $N$ as a $\Z H$-module in the usual way.
We write $N=\bigcup_{\lambda\in\Lambda} N_\lambda$ as the union of a continuous chain of $\Z H$-submodules of $N$ of cardinality $\aleph_{n-1}$.  Then $G$ is the union of the continuous chain of subgroups $G_{\lambda}=N_{\lambda}\rtimes H$. Here we are supposing that the $\lambda$ run through the ordinals belonging to $\aleph_{n}$, that $N_{\lambda}\subseteq N_{\mu}$ whenever $\lambda<\mu$ and that the chain is continuous in the sense that for each limit ordinal $\lambda\in\aleph_{n}$, there is an equality $N_{\lambda}=\bigcup_{\alpha<\lambda}N_{\alpha}$.
 These same remarks about ordering and continuity also apply to the chain of $G_{\lambda}$.

Note that $h(G_\lambda)=h(G)=h(H)=\hd_\Q(G_\lambda)=\hd_\Q(G)$.

\begin{lemma}\label{Holt's condition star}
The chain of subgroups $G_{\lambda}$, ($\lambda\in\aleph_{n}$), can be chosen so that for each $\lambda$ there is a subgroup $G_{\lambda}'$ of $G_{\lambda+1}$ that contains $G_{\lambda}$ as a proper subgroup of finite index.
\end{lemma}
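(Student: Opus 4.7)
The chain $G_\lambda=N_\lambda\rtimes H$ is determined by the chain $(N_\lambda)$ of $\Z H$-submodules. Since $H\le G_\lambda$, any intermediate subgroup $G_\lambda'$ with $G_\lambda\le G_\lambda'\le G_{\lambda+1}$ automatically splits as $G_\lambda'=(G_\lambda'\cap N)\rtimes H$, where $N_\lambda':=G_\lambda'\cap N$ is a $\Z H$-submodule of $N_{\lambda+1}$ containing $N_\lambda$. The lemma therefore reduces to choosing $(N_\lambda)$ so that at each successor stage $\lambda+1$ the quotient $N_{\lambda+1}/N_\lambda$ contains a non-zero \emph{finite} $\Z H$-submodule. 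The plan is to build such a chain by a ``pre-load and upgrade'' transfinite recursion.

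The key algebraic input is that for every $a\in N$ there is a proper $\Z H$-submodule $P_a\subsetneq\Z H\cdot a$ with $\Z H\cdot a/P_a$ finite. Since $N$ is locally finite, $a$ has finite order and $\Z H\cdot a$ is a cyclic $\Z H$-module annihilated by some integer; by Zorn's lemma it admits a maximal proper submodule $P_a$, giving a simple torsion quotient. Roseblade's theorem on polycyclic group rings (see \cite{roseblade}) guarantees that every simple torsion $\Z H$-module is finite over the prime field it sits over, so $P_a$ has the desired property.

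Enumerate $N=\{a_\alpha:\alpha<\aleph_n\}$ and build $(N_\lambda)$ by transfinite recursion, maintaining a pool of pre-loaded-but-not-yet-upgraded elements. At each successor stage $\lambda+1$ one performs two operations: (i) \emph{pre-load} a fresh element $a\in N\setminus N_\lambda$ by enlarging $N_\lambda$ with $P_a$ and entering $a$ into the pool; (ii) \emph{upgrade} some element $c$ drawn from the pool by adjoining the full $\Z H\cdot c$, which is consistent with the inductive hypothesis since $P_c\subseteq N_\lambda$ by the pool invariant. Limit stages are defined by continuous union. Then $N_\lambda':=N_\lambda+\Z H\cdot c$ gives the required intermediate submodule, for $N_\lambda'/N_\lambda$ is a non-zero quotient of the finite module $\Z H\cdot c/P_c$; properness is guaranteed by the requirement $c\notin N_\lambda$ at pre-load time, and $G_\lambda':=N_\lambda'\rtimes H$ is then the desired finite-index overgroup of $G_\lambda$ inside $G_{\lambda+1}$.

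The hard part is the ordinal bookkeeping that makes all the constraints compatible: one must schedule pre-loads and upgrades across $\aleph_n$ stages so that every element of $N$ is eventually upgraded (ensuring $N=\bigcup N_\lambda$), the pool remains non-empty at every upgrade, the chain is continuous at limits, and each $N_\lambda$ has cardinality $\aleph_{n-1}$. Fresh pre-loads are always available because $|N\setminus N_\lambda|=\aleph_n$ throughout, and each step contributes only countably many $\Z H$-generators, so limit stages aggregate $\aleph_{n-1}$-many previous contributions without overrunning the cardinality bound. These scheduling details, while delicate, amount to routine transfinite arithmetic.
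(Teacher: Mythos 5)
Your reduction to modules is correct ($G_\lambda'=(G_\lambda'\cap N)\rtimes H$ by a Dedekind-type argument, since $H\le G_\lambda'$), and you have identified the same algebraic engine as the paper: Zorn's lemma produces simple quotients of cyclic $\Z H$-modules, and Roseblade's theorem makes them finite. But there is a genuine gap in the ``pre-load and upgrade'' scheme, and it is not a matter of scheduling. When you upgrade $c$ at stage $\lambda$ you need \emph{both} $P_c\subseteq N_\lambda$ \emph{and} $c\notin N_\lambda$; the first is your pool invariant, but the second is asserted only ``at pre-load time''. Between the stage $\mu$ at which $c$ is pre-loaded and the stage $\lambda$ at which it is upgraded, you adjoin the submodules $P_a$ of other pre-loads and the cyclic modules $\Z H\cdot c'$ of other upgrades, and nothing prevents $c$ from landing in $N_\mu+P_a+\cdots$ even though $c$ lies in none of the individual summands. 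If every pooled element is absorbed in this way before being upgraded, the construction stalls: $N_\lambda$ is by then fixed, and an arbitrary proper submodule $N_\lambda<N$ need not admit \emph{any} intermediate $N_\lambda'$ with $N_\lambda'/N_\lambda$ finite and nonzero --- finite simple \emph{quotients} of cyclic modules do not yield finite \emph{submodules} of $N/N_\lambda$. So the invariant your argument relies on is not shown to be maintainable, and the difficulty it hides is exactly the one the lemma exists to solve.

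The paper avoids this by running Zorn's lemma \emph{relative to the interval} rather than inside a cyclic module in isolation: starting from any continuous exhausting chain, choose $x_\lambda\in N_{\lambda+1}\smallsetminus N_\lambda$ and take $M_\lambda\le N_{\lambda+1}$ maximal among submodules containing $N_\lambda$ and avoiding $x_\lambda$. Then every nonzero submodule of $N_{\lambda+1}/M_\lambda$ contains the image of $x_\lambda$, so the socle $L_\lambda/M_\lambda$ is simple, hence finite by Roseblade, and one replaces $N_\lambda$ by $M_\lambda$ (the sandwich $N_\lambda\le M_\lambda<L_\lambda\le N_{\lambda+1}\le M_{\lambda+1}$ keeps the chain intact). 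Because the maximal submodule is chosen after both ends of the step are known, there is no interference problem and no bookkeeping. The natural repair of your argument is to postpone the choice of $P_c$ to upgrade time and make it maximal subject to containing $N_\lambda$ and avoiding $c$ inside $N_{\lambda+1}$ --- at which point you have reproduced the paper's proof.
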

\begin{proof}
Let $H$, $N_{\lambda}$ be chosen as explained above. The initial choice $G_{\lambda}:=N_{\lambda}H$ may not satisfy the conclusion of our lemma but we start with this and make adjustments.
Clearly we may assume that $N_{\lambda}<N_{\lambda+1}$ for each $\lambda\in\aleph_{n}$.
For each $\lambda$, choose $x_{\lambda}\in N_{\lambda+1}\smallsetminus N_{\lambda}$ and let $M_{\lambda}$ be a submodule of $N_{\lambda+1}$ containing $N_{\lambda}$, and chosen using Zorn's Lemma to be maximal subject to not containing $x_{\lambda}$. Then $N_{\lambda+1}/M_{\lambda}$ has a simple socle $L_{\lambda}/M_{\lambda}$ and this is finite by Roseblade's Main Theorem \cite {roseblade}. Now we may replace $N_{\lambda}$ by $M_{\lambda}$ and $G_{\lambda}=N_{\lambda}H$ by $M_{\lambda}H$. The result follows because each $M_{\lambda}H$ is properly contained in the finite index overgroup $L_{\lambda}H$ and this in turn is contained in $N_{\lambda+1}H\subset M_{\lambda+1}H$.
\end{proof}

We consider the cohomology of $G$ with coefficients in an arbitrary free $\Q G$-module. For a $\Q$-vector space $V$ we write $VG$ for the free module $V\otimes\Q G$ and similarly for any subgroup $S$ of $G$ we write $VS$ for the free $\Q S$-submodule $V\otimes\Q S$. 

We work with the standard bar resolution for cohomology. Thus cohomology classes $[\theta]\in H^{n}(G,M)$ are represented by $n$-cocycles $$\theta:\underbrace{G\times\dots\times G}_n\to M.$$ Details may be found in any standard text on group cohomology, see for example \cite{gruenberg}.

\begin{lemma}\label{l:d} Let $d$ be a natural number.
Suppose that the cohomology groups $H^{d-1}(G_{\lambda},VG)$ vanish for  all $\lambda$. If $\theta:\underbrace{G\times\dots\times G}_{d}\to VG$ is a $d$-cocycle whose restriction to $\underbrace{G_{\lambda}\times\dots\times G_{\lambda}}_{d}$ is a coboundary for all $\lambda$, then $\theta$ is a coboundary. In other words, every non-zero element of $H^{d}(G,VG)$ has non-zero restriction to $H^{d}(G_{\lambda},VG)$ for some $\lambda$.
\end{lemma}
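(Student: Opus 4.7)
The plan is to prove the contrapositive by transfinite induction on $\lambda\in\aleph_n$: assuming $\theta|_{G_\lambda^d}$ is a coboundary for every $\lambda$, we build a coherent family of $(d-1)$-cochains $\eta_\lambda:G_\lambda^{d-1}\to VG$ with $d\eta_\lambda=\theta|_{G_\lambda^d}$ and $\eta_\mu|_{G_\lambda^{d-1}}=\eta_\lambda$ whenever $\lambda\le\mu$. Since the chain is continuous and exhausts $G$, the union $\eta=\bigcup_\lambda\eta_\lambda$ is then a well-defined cochain on $G$ with $d\eta=\theta$, showing $[\theta]=0$ in $H^d(G,VG)$.

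The base case is immediate: $\eta_0$ exists because $\theta|_{G_0^d}$ is a coboundary by hypothesis. For a limit ordinal $\lambda$, continuity gives $G_\lambda^{d-1}=\bigcup_{\mu<\lambda}G_\mu^{d-1}$, so the coherent family $(\eta_\mu)_{\mu<\lambda}$ assembles into a cochain $\eta_\lambda$; its coboundary automatically agrees with $\theta|_{G_\lambda^d}$ since this can be checked on each $G_\mu^d$.

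The only non-formal step is the successor ordinal, and this is where the vanishing hypothesis enters. Given $\eta_\lambda$, choose any $(d-1)$-cochain $\eta'$ on $G_{\lambda+1}$ with $d\eta'=\theta|_{G_{\lambda+1}^d}$, as provided by hypothesis. The difference $\eta'|_{G_\lambda^{d-1}}-\eta_\lambda$ is a $(d-1)$-cocycle on $G_\lambda$ with values in $VG$, so by the assumption $H^{d-1}(G_\lambda,VG)=0$ there is a $(d-2)$-cochain $\zeta:G_\lambda^{d-2}\to VG$ with $d\zeta=\eta'|_{G_\lambda^{d-1}}-\eta_\lambda$. Extend $\zeta$ arbitrarily as a set map to $\tilde\zeta:G_{\lambda+1}^{d-2}\to VG$ (cochains in the bar resolution are just set maps with values in the module, so no compatibility is lost). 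Set $\eta_{\lambda+1}:=\eta'-d\tilde\zeta$. Then $d\eta_{\lambda+1}=d\eta'=\theta|_{G_{\lambda+1}^d}$, and on $G_\lambda^{d-1}$ we have $\eta_{\lambda+1}|_{G_\lambda^{d-1}}=\eta'|_{G_\lambda^{d-1}}-d\zeta=\eta_\lambda$, giving the required coherence.

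The only possible obstacle is bookkeeping, not mathematics: one must make sure that the extension of $\zeta$ from $G_\lambda^{d-2}$ to $G_{\lambda+1}^{d-2}$ is harmless, which it is because cochains in the standard bar complex carry no structure beyond being functions. No use of the specific form of $\theta$, nor of $VG$ being free, is needed beyond the vanishing hypothesis, so the argument is purely formal transfinite induction powered by the assumed acyclicity of each $G_\lambda$ in degree $d-1$.
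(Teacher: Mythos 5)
Your proof is correct and follows essentially the same route as the paper's: a transfinite induction building a compatible family of $(d-1)$-cochains, taking unions at limit ordinals and, at successors, correcting an arbitrary primitive by the coboundary of a $(d-2)$-cochain supplied by the vanishing of $H^{d-1}(G_\lambda,VG)$. The paper's own proof differs only in notation.
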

\begin{proof}
This is a well known argument but we include the details for the reader's convenience. For each $\lambda$ we may choose a cochain 
$$\phi_{\lambda}:\underbrace{G_{\lambda}\times\dots\times G_{\lambda}}_{d-1}\to VG$$ such that $\delta\phi_{\lambda}$ agrees with $\theta$ on $\underbrace{G_{\lambda}\times\dots\times G_{\lambda}}_{d}$. We wish to make the choices of $\phi_{\lambda}$ in a compatible way so that if $\lambda<\lambda'$ then $\phi_{\lambda}$ and $\phi_{\lambda'}$ agree on $\underbrace{G_{\lambda}\times\dots\times G_{\lambda}}_{d-1}$. If this can be done then clearly the $\phi_{\lambda}$ uniquely determine a cochain $\phi$ and it has the property $\delta\phi=\theta$. 

We make the choices of $\phi_{\lambda}$ inductively. Suppose that $\phi_{\lambda}$ is to be chosen and that $\phi_{\alpha}$ have already been chosen in a compatible way for each $\alpha<\lambda$. If $\lambda$ is a limit ordinal then clearly there is a unique choice for $\phi_{\lambda}$ specified by the fact that its domain $G_{\lambda}\times\dots\times G_{\lambda}$ is the union of the domains $G_{\alpha}\times\dots\times G_{\alpha}$ of the already defined $\phi_{\alpha}$, ($\alpha<\lambda$). If $\lambda$ is a successor ordinal then begin by choosing any cochain $\psi:G_{\lambda}\times\dots\times G_{\lambda}\to VG$ such that $\delta\psi$ agrees with $\theta$ on $G_{\lambda}\times\dots\times G_{\lambda}$. We have that $\delta\phi_{\lambda-1}$ and $\delta\psi$ agree with $\theta$, and therefore with each other, on
$G_{\lambda-1}\times\dots\times G_{\lambda-1}$, and hence that the restriction of $\psi-\phi_{\lambda-1}$ to this domain is a $(d-1)$-cocycle: it satisfies $\delta(\psi-\phi_{\lambda-1})=0$. Now we invoke the hypothesis that $H^{d-1}(G_{\lambda-1},VG)$ vanishes. Therefore there is a $(d-2)$-cochain $\xi:\underbrace{G_{\lambda-1}\times\dots\times G_{\lambda-1}}_{d-2}$ such that $\delta\xi=\psi-\phi_{\lambda-1}$. Now choose any extension $\zeta$ of $\xi$ to $\underbrace{G_{\lambda}\times\dots\times G_{\lambda}}_{d-2}$ and set $\phi_{\lambda}$ equal to $\psi-\delta\zeta$. On $\underbrace{G_{\lambda}\times\dots\times G_{\lambda}}_{d}$ we have $\delta\phi_{\lambda}=\delta(\psi-\delta\zeta)=\delta\psi=\theta$ and also $\phi_{\lambda}$ extends $\psi-\delta\xi=\phi_{\lambda-1}$ on $\underbrace{G_{\lambda-1}\times\dots\times G_{\lambda-1}}_{d-1}$.
\end{proof}

\begin{lemma}\label{l:lambda-mu}
Let $[\theta]$ be a cohomology class in $H^{d}(G,VG)$ represented by a cocycle $\theta$. Then for each $\lambda$ there exists $\mu\ge\lambda$ such that the restriction of $[\theta]$ to $H^{d}(G_{\mu},VG)$ lies in the summand $H^{d}(G_{\mu},VG_{\mu})$.
\end{lemma}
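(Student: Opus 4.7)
The plan is to show that one can enlarge $\lambda$ so that the cocycle $\theta$, restricted to $G_\mu^d$, takes its values entirely in the summand $VG_\mu$ of $VG$; the statement about cohomology classes then follows at once from the fact that the decomposition $VG = VG_\mu\oplus W_\mu$ of $\Q G_\mu$-modules (where $W_\mu=\bigoplus_{c\ne1}VG_\mu c$ for any right transversal $C$ of $G_\mu$ in $G$) induces a splitting $H^d(G_\mu,VG)=H^d(G_\mu,VG_\mu)\oplus H^d(G_\mu,W_\mu)$, and a cocycle supported in $VG_\mu$ has zero projection to $W_\mu$.

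The key observation is that any element of $VG$ has \emph{finite} support as a $\Q$-linear combination of basis vectors $v\otimes g$ with $v$ in a chosen $\Q$-basis of $V$ and $g\in G$. In particular, for each tuple $(g_1,\dots,g_d)\in G^d$ the value $\theta(g_1,\dots,g_d)$ is supported on a finite set of group elements; membership of this value in $VG_\mu$ is equivalent to all of these group elements lying in $G_\mu$. Thus I need to close $G_\lambda$ under the operation of taking supports of $\theta$ on $d$-tuples.

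I would carry this out by a standard transfinite union/cofinality argument. Define a strictly increasing sequence of ordinals $\lambda=\mu_0<\mu_1<\mu_2<\cdots$ as follows. Given $\mu_k<\aleph_n$, the domain $G_{\mu_k}^d$ has cardinality at most $\aleph_{n-1}$, so the union of the finite supports of $\theta(g_1,\dots,g_d)$ over $(g_1,\dots,g_d)\in G_{\mu_k}^d$ is a set $X_k\subseteq G$ of cardinality at most $\aleph_{n-1}$. Using the retraction $p\colon G=N\rtimes H\to N$, the set $p(X_k)$ has cardinality at most $\aleph_{n-1}$, and for each element $n\in p(X_k)$ there is some ordinal $\nu(n)<\aleph_n$ with $n\in N_{\nu(n)}$. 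Since $\aleph_n$ is regular and $|p(X_k)|\le\aleph_{n-1}$, the supremum of these $\nu(n)$ is still less than $\aleph_n$, so one may choose an ordinal $\mu_{k+1}$ with $\mu_k<\mu_{k+1}<\aleph_n$ and $N_{\mu_{k+1}}\supseteq p(X_k)$; this guarantees $X_k\subseteq N_{\mu_{k+1}}H=G_{\mu_{k+1}}$.

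Finally set $\mu=\sup_k\mu_k$. Because $\aleph_n$ has uncountable cofinality (as $n\ge1$), we have $\mu<\aleph_n$, and by the continuity of the chain $G_\mu=\bigcup_k G_{\mu_k}$. Any $d$-tuple in $G_\mu^d$ already lies in some $G_{\mu_k}^d$ (take the largest $k$ that is required), so its $\theta$-value is supported in $X_k\subseteq G_{\mu_{k+1}}\subseteq G_\mu$; hence $\theta|_{G_\mu^d}$ maps into $VG_\mu$, as required. There is no substantial obstacle; the only point that needs care is the cofinality computation, which relies critically on the assumption that the $N_\lambda$ were chosen of cardinality $\aleph_{n-1}$ together with regularity of $\aleph_n$.
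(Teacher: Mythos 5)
Your argument is correct and is precisely the argument the paper sketches (attributing it to Holt): construct an increasing sequence $\lambda=\mu_0<\mu_1<\cdots$ of ordinals so that $\theta$ restricted to $G_{\mu_k}^d$ takes values in the summand $VG_{\mu_{k+1}}$, then take $\mu=\sup_k\mu_k$, which stays below $\aleph_n$ by uncountable cofinality. The cardinality and regularity details you supply (finite supports, $|G_{\mu_k}|\le\aleph_{n-1}<\operatorname{cf}(\aleph_n)$, continuity of the chain at $\mu$) are exactly the points the paper leaves implicit, and they are handled correctly.
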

\begin{proof} This argument may be found in Holt's paper \cite{holt1981}.
Fix $\lambda$.
Define a sequence $\lambda_{j}$ of ordinals in $\aleph_{n}$, indexed by natural numbers $j$ and starting with $\lambda_{1}=\lambda$, so that the restriction of $\theta$ to $G_{\lambda_{j}}\times\dots\times G_{\lambda_{j}}$ has image in the summand $VG_{\lambda_{j+1}}$. Then let $\mu$ be the limit (i.e. union) of the $\lambda_{j}$.
\end{proof}

\begin{lemma}\label{l:Holt*} Suppose that $K<L \le P\le G$ are groups 
 where $K$ has finite index in $L$.
Then, for all $[\theta]\in H^d(P,VG)$, if the restriction of $[\theta]$ to $H^{d}(K,VG)$ lies in the summand $H^{d}(K,VK)$ then this restriction is zero.
\end{lemma}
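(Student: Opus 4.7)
The plan is to use the $\Q K$- and $\Q L$-module decompositions of $VG$ together with two applications of Shapiro's lemma. Write $VG = \bigoplus_{y \in L\backslash G} VLy$ as a $\Q L$-module and refine this to the $\Q K$-decomposition $VLy = \bigoplus_{x \in K\backslash L} VKxy$, giving $VG = \bigoplus_{Kg \in K\backslash G} VKg$ as a $\Q K$-module. Set $\alpha := \operatorname{res}_L^P[\theta] \in H^d(L,VG)$ and write $\alpha = \sum_y \alpha_y$ with $\alpha_y \in H^d(L, VLy)$. The hypothesis says $\operatorname{res}_K^L \alpha \in H^d(K,VG)$ is supported in the $H^d(K,VK)$ summand; I will argue that each $\alpha_y$ must then be zero.

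For every $y$ with $Ly \ne L$, every coset $Kxy$ (with $x \in K\backslash L$) differs from $K$, so the hypothesis kills the entire class $\operatorname{res}_K^L \alpha_y$ in $\bigoplus_x H^d(K, VKxy)$. Now $VLy \cong \operatorname{ind}_K^L VK$ as $\Q L$-modules, and Shapiro's lemma identifies $H^d(L,VLy)$ with $H^d(K,VK)$ as the composite of $\operatorname{res}_K^L$ with projection onto the $VK$ summand. In particular $\operatorname{res}_K^L$ is injective on $H^d(L,VLy)$, so $\alpha_y = 0$. This leaves the term $y = 1$, where only the $VK$-component of $\operatorname{res}_K^L \alpha_1$ is allowed to be nonzero; the task is to kill this last component.

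The key device is the right-multiplication automorphism $R_{x^{-1}}: VL \to VL$, $v\otimes l \mapsto v\otimes l x^{-1}$, for any choice of $x \in L\setminus K$; such $x$ exists because $K<L$ is proper. Since left and right multiplication on $\Q L$ commute, $R_{x^{-1}}$ is $\Q L$-linear (hence also $\Q K$-linear), and it permutes the $\Q K$-summands of $VL$, sending $VKx$ to $VK$. Functoriality yields the identity
\[
p_1 \circ \operatorname{res}_K^L \circ (R_{x^{-1}})_* = (R_{x^{-1}})_* \circ p_x \circ \operatorname{res}_K^L
\]
on $H^d(L,VL)$. Applied to $\alpha_1$, the right-hand side vanishes by hypothesis, so $p_1 \operatorname{res}_K^L (R_{x^{-1}})_*\alpha_1 = 0$. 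Since $p_1\operatorname{res}_K^L$ is the Shapiro isomorphism $H^d(L,VL) \to H^d(K,VK)$ and $(R_{x^{-1}})_*$ is an automorphism of $H^d(L,VL)$, this forces $\alpha_1 = 0$. The main conceptual obstacle is recognising that the permutation of $\Q K$-summands of $VL$ is realised by a $\Q L$-module automorphism, which lets an ``allowed'' cohomology component be transported to a ``forbidden'' one. Combining both cases gives $\alpha = 0$, hence $\operatorname{res}_K^P[\theta] = \operatorname{res}_K^L \alpha = 0$.
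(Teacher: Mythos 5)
Your central device --- transporting the ``allowed'' $VK$-component to a ``forbidden'' one via the right-multiplication automorphism $R_{x^{-1}}$ of $VL$, and then invoking the fact that $p_1\circ\operatorname{res}^L_K:H^d(L,VL)\to H^d(K,VK)$ is the Shapiro isomorphism --- is correct, and it is essentially the content of Holt's Lemma~3, which is all the paper offers by way of proof (it gives only the citation). The genuine gap is the opening step: writing $\alpha=\sum_y\alpha_y$ with $\alpha_y\in H^d(L,VLy)$ presupposes that the natural split injection $\bigoplus_y H^d(L,VLy)\to H^d(L,\bigoplus_y VLy)$ is surjective. Cohomology does not commute with infinite direct sums unless the group is of type $\FP_{d+1}$ over $\Q$, and in the intended application $L$ is a finite extension of a group $N_\mu\rtimes H$ with $N_\mu$ an infinite locally finite abelian group, so this fails: a cocycle representing $\alpha$ may take values spread over infinitely many of the summands $VLy$. (This is the same phenomenon that Lemma~\ref{l:lambda-mu} exists to circumvent.) Consequently neither the treatment of the $y\ne1$ terms nor the final step ``combining both cases gives $\alpha=0$'' is justified as written.

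Fortunately the flawed decomposition is not needed. Let $q:VG\to VL$ be the $\Q L$-module projection and set $\alpha_1:=q_*\alpha$, which is well defined with no decomposition of $\alpha$. Writing $\beta:=\operatorname{res}^P_K[\theta]=\operatorname{res}^L_K\alpha$, the hypothesis says $\beta=\iota_*\gamma$ for some $\gamma\in H^d(K,VK)$, where $\iota:VK\hookrightarrow VG$; since $q\circ\iota$ is the inclusion $VK\hookrightarrow VL$, which is split, $\beta$ vanishes as soon as $q_*\beta$ does, and $q_*\beta=\operatorname{res}^L_K q_*\alpha=\operatorname{res}^L_K\alpha_1$ still lies in the summand $H^d(K,VK)$ of $H^d(K,VL)$. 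Your $y=1$ argument now applies verbatim to $\alpha_1\in H^d(L,VL)$: for $x\in L\smallsetminus K$ one gets $p_1\operatorname{res}^L_K(R_{x^{-1}})_*\alpha_1=(R_{x^{-1}})_*p_x\operatorname{res}^L_K\alpha_1=0$, and since $p_1\circ\operatorname{res}^L_K$ is an isomorphism and $(R_{x^{-1}})_*$ an automorphism, $\alpha_1=0$, whence $\beta=0$. With this modification (which also renders the $y\ne1$ discussion superfluous) the proof is complete and is a legitimate module-theoretic rendering of Holt's argument.
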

\begin{proof}
This is an easy variation on Holt's (\cite{holt1981}, Lemma 3).
\end{proof}

We shall now complete the promised deduction of Theorem \ref{t3} and Addendum \ref{a:conj} from Conjecture \ref{c:want}.

\begin{proposition}
Conjecture \ref{c:want} implies that if $G$ is a nilpotent-by-polycyclic-by-finite group of cardinality $\aleph_n$ (where $0<n<\omega$) and Hirsch length $h<\infty$ and $V$ is any vector space over $\Q$ then $H^{i}(G,VG)=0$ for all $i\ne n+h+1$.
\end{proposition}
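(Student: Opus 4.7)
The plan is to prove the Proposition by induction on $n$, with Conjecture \ref{c:want} furnishing the base case and the four Lemmas \ref{Holt's condition star}, \ref{l:d}, \ref{l:lambda-mu} and \ref{l:Holt*} driving the inductive step. First I would recycle the reductions already carried out in the setup for Theorem \ref{t3} above: by passing to a subgroup of finite index, quotienting $N$ by $[N,N]$, trimming the torsion-free rank of $N/[N,N]$ by a countable $G$-invariant submodule, and finally quotienting by the countable normal subgroup $H\cap N$, we may assume that $G=N\rtimes H$ with $N$ a locally finite abelian normal subgroup of cardinality $\aleph_n$ and $H$ polycyclic of Hirsch length $h$. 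The continuous chain $G_\lambda=N_\lambda\rtimes H$ ($\lambda<\omega_n$) of Lemma \ref{Holt's condition star} then consists of nilpotent-by-polycyclic subgroups of cardinality $\aleph_{n-1}$ and the same Hirsch length $h$, with each $G_\lambda$ a proper subgroup of finite index in some $G_\lambda'\subseteq G_{\lambda+1}$.

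For the base case $n=0$, $G$ is countable and is an extension of the type to which Conjecture \ref{c:want} applies, so that conjecture yields $H^i(G,F)=0$ for $i\le h$; the classical bound $\cdq(G)\le\hdq(G)+1=h+1$ for countable groups supplies the complementary vanishing for $i\ge h+2$. This settles Addendum \ref{a:conj} at $n=0$.

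For the inductive step, assume the Proposition at cardinality $\aleph_{n-1}$, so that each $G_\lambda$ satisfies $H^q(G_\lambda,F')=0$ for every free $\Q G_\lambda$-module $F'$ and every $q\ne h+n$. Since $VG$, via the coset decomposition $G=\bigsqcup G_\lambda t_\alpha$, is free as a left $\Q G_\lambda$-module, this delivers $H^q(G_\lambda,VG)=0$ for $q\ne h+n$. The upper bound $\cdq(G)\le h+n+1$ furnished by Lemma \ref{ccolim} disposes of every $i>h+n+1$. For $i<h+n$, the first-quadrant spectral sequence
\[
E_2^{p,q}=\lim^{p}H^q(G_\lambda,VG)\Rightarrow H^{p+q}(G,VG)
\]
is concentrated on the row $q=h+n$, so $E_2^{p,q}=0$ whenever $p+q=i<h+n$, which forces $H^i(G,VG)=0$.

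The only remaining degree is $i=h+n$. Given $[\theta]\in H^{h+n}(G,VG)$ represented by a cocycle $\theta$, Lemma \ref{l:lambda-mu} supplies, for each $\lambda$, an index $\mu\ge\lambda$ such that $[\theta]|_{G_\mu}$ lies in the summand $H^{h+n}(G_\mu,VG_\mu)\subseteq H^{h+n}(G_\mu,VG)$. Lemma \ref{Holt's condition star} provides a subgroup $G_\mu'\subseteq G_{\mu+1}$ containing $G_\mu$ as a proper subgroup of finite index, and Lemma \ref{l:Holt*}, applied with $K=G_\mu$, $L=G_\mu'$ and $P=G_{\mu+1}$, then forces $[\theta]|_{G_\mu}=0$. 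By functoriality of restriction, $[\theta]|_{G_\lambda}=0$ for every $\lambda$, and Lemma \ref{l:d} applied with $d=h+n$---whose hypothesis $H^{h+n-1}(G_\lambda,VG)=0$ is secured by the induction---concludes that $\theta$ is a coboundary on $G$. Thus $H^{h+n}(G,VG)=0$, completing the induction.

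The main obstacle is the critical degree $i=h+n$: the spectral sequence alone kills $H^i(G,VG)$ only for $i<h+n$, leaving the entire strip $h+n\le i\le h+2n+1$ potentially non-zero, and only the Roseblade-based near-splitting of Lemma \ref{Holt's condition star}---combined with the finite-index transfer argument of Lemma \ref{l:Holt*}---cuts out the bottom row at $i=h+n$; the complementary strip $h+n+2\le i\le h+2n+1$ is handled uniformly by the $\cdq$ upper bound.
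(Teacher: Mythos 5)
Your proof is correct and takes essentially the same route as the paper's: the same reduction to the (locally finite abelian)-by-polycyclic case, the same induction on $n$ with Conjecture \ref{c:want} supplying the base case, and the same use of Lemmas \ref{Holt's condition star}, \ref{l:d}, \ref{l:lambda-mu} and \ref{l:Holt*} to eliminate the critical degree $i=h+n$. The only (harmless) deviations are cosmetic: you apply Lemma \ref{l:Holt*} with $P=G_{\mu+1}$ rather than $P=G$, and you spell out the ranges $i<h+n$ and $i>h+n+1$ more explicitly than the paper does.
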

\begin{proof} 
In the paragraphs following the statement of Addendum \ref{a:conj}
we discussed how to reduce  
to the case of a (torsion abelian)-by-polycyclic group. 
Now, provided we insist that the torsion-abelian subgroup is infinite, the statement of the Proposition
makes sense for groups of this form when $n=0$, in which case it is a direct translation from Conjecture \ref{c:want}.
This observation  forms the base case for a proof by induction. 

Assume now that $n\ge 1$ and that $G = \bigcup_{\lambda\in\Lambda}G_\lambda$, as arranged above.
By induction, $H^{d-1}(G_\lambda, VG)=0$ for all $d\le h+n$, so by Lemma \ref{l:d} the induction will
be complete if the restriction to every $G_\lambda$ of each cohomology class $[\theta]\in H^{h+n}(G,VG)$ is zero.
It is enough to show, for each $\lambda$, that there exists $\mu>\lambda$
such that the restriction to $H^{h+n}(G_\mu, VG)$ is trivial, and in the light of Lemma \ref{l:lambda-mu} 
we may assume that this last restriction lies in the summand $H^{h+n}(G_\mu, VG_\mu)$. Finally,
Lemma \ref{Holt's condition star} allows us to employ Lemma \ref{l:Holt*} with $K=G_\mu$ and
$L=G_\mu'$ and $P=G$, and thus we conclude that the restriction of $[\theta]$ to $H^{h+n}(G_\mu, VG)$ 
is zero, as required. 
\end{proof}

\bibliography{brid}
\bibliographystyle{abbrv}

\end{document}